\documentclass{amsart}

\usepackage{color}
\usepackage{amsfonts}
\usepackage{amssymb}
\usepackage{enumerate}
\usepackage{amsmath}
\usepackage{amsthm}
\usepackage{graphicx}
\usepackage{dsfont}
\usepackage{bbm}
\usepackage[english]{babel}

\numberwithin{equation}{section}
\allowdisplaybreaks

\newtheorem*{mt}{Main Theorem}

\newtheorem{theorem}{Theorem}[section]
\newtheorem{lemma}[theorem]{Lemma}

\newtheorem{problem}[theorem]{Problem}

\newtheorem{corollary}[theorem]{Corollary}

\newtheorem*{mmmt}{Theorem~\ref{t:Gd}}

\theoremstyle{definition}

\newtheorem{remark}[theorem]{Remark}
\newtheorem{definition}[theorem]{Definition}
\newtheorem{notation}[theorem]{Notation}
\newtheorem*{dddf}{Definition~\ref{d:gdm}}

\DeclareMathOperator{\graph}{graph}

\DeclareMathOperator{\supp}{supp}

\DeclareMathOperator{\Cov}{Cov}
\DeclareMathOperator{\Dim}{Dim}

\newcommand{\N}{\mathbb{N}}
\newcommand{\QQ}{\mathbb{Q}}
\newcommand{\Z}{\mathbb{Z}}
\newcommand{\R}{\mathbb{R}}
\renewcommand{\P}{\mathbb{P}}

\renewcommand{\P}{\mathbb{P}}
\newcommand{\E}{\mathbb{E}}
\newcommand{\eps}{\varepsilon}

\newcommand{\iA}{\mathcal{A}}
\newcommand{\iB}{\mathcal{B}}

\newcommand{\iD}{\mathcal{D}}

\newcommand{\iL}{\mathcal{L}}

\newcommand{\iF}{\mathcal{F}}
\newcommand{\iP}{\mathcal{P}}

\newcommand{\iU}{\mathcal{U}}

\overfullrule10pt

\begin{document}
 \thispagestyle{empty}

\title[Packing dimension of Gaussian random fields with drift]{Packing dimension of images and graphs of Gaussian random fields with drift}

\author{Rich\'ard Balka}

\address{Department of Mathematics, University of British Columbia, and Pacific Institute for the Mathematical Sciences, Vancouver, BC V6T 1Z2, Canada}

\thanks{The author was supported by the National Research, Development and Innovation Office-NKFIH, Grant 104178.}

\email{balka@math.ubc.ca}

\subjclass[2010]{28A78, 60G15, 60G17.}

\keywords{Gaussian random fields, fractional Brownian motion, drift, image, graph, packing dimension, packing dimension profiles}

\begin{abstract} Let $X=\{(X_1(t),\dots,X_d(t)): t\in \mathbb{R}^n\}$ be a Gaussian random field in $\R^d$ such that $X_1,\dots,X_d$ are independent, centered Gaussian random fields with continuous sample paths. Let $f\colon \mathbb{R}^n\to \mathbb{R}^d$ be a Borel map and let $A\subset \mathbb{R}^n$ be an analytic set. The main goal of the paper is to determine the almost sure value of the packing dimension of the image and graph of $X+f$ restricted to $A$ under a very mild assumption. This generalizes a result of Du, Miao, Wu and Xiao, who calculated the packing dimension of $X(A)$ if $X_1,\dots,X_d$ are independent copies of the same Gaussian random field $X_0$. Provided that $X$ is a fractional Brownian motion, our result is new even if $n=d=1$ and $f$ is continuous, and even if $f\equiv 0$ in the case of graphs.

For a fractional Brownian motion $X$ we also obtain the sharp lower bound for the packing dimension of the graph of $X$ over $A$ in terms of the Hurst index of $X$ and the packing dimension of $A$. The analogous result for images was obtained by Talagrand and Xiao. 
\end{abstract}

\maketitle

\section{Introduction} 

\subsection{The Main Theorem} \label{ss:mt} 
For a set $A\subset \R^n$ denote by $\Dim A$ and $\iP_c(A)$ the packing dimension of $A$ and the family of compactly supported Borel probability measures on $A$, respectively. For $\mu\in \iP_c(\R^n)$ and $h\colon \R^n\to \R^m$ we use the notation $\mu_h=\mu \circ h^{-1}$; if $h$ is Borel measurable on the support of $\mu$ then $\mu_h$ is a Borel probability measure on $\R^m$. For $x\in \R^n$ and $r>0$ let $B(x,r)$ denote the closed ball of radius $r$ around $x$. Let $\iB(\R^n)$ be the $\sigma$-algebra of Borel sets in $\R^n$. A random map $Z\colon \R^n\to \R^m$ defined on the probability space $(\Omega,\iF,\P)$ is called \emph{jointly measurable} if $Z\colon \R^n\times \Omega\to \R^m$ is measurable from $\iB(\R^n)\otimes \iF$ to $\iB(\R^m)$. 

In order to state our main result, we assign different notions of dimension to jointly measurable random maps. This can be considered as a generalization of the packing dimension profiles due to Falconer and Howroyd \cite{FH}. 
	
\begin{definition} \label{d:Z} Let $Z\colon \R^n\to \R^m$ be a jointly measurable random map. For all $\mu\in \iP_c(\R^n)$ define  
\[ \Dim_Z \mu=\sup\left\{\gamma: \liminf_{r\to 0+} r^{-\gamma} \E(\mu_{Z}(B(Z(t),r)))=0 \textrm{ for $\mu$-a.e.\ $t$}\right\}.\]
For every set $A\subset \R^n$ let 
\[ \Dim_{Z} A=\sup\{ \Dim_{Z} \mu: \mu\in \iP_{c}(A)\}.\]
\end{definition}

We will prove the following general theorem. 

\begin{theorem} \label{t:lbb} Let $Z\colon \R^n\to \R^m$ be a jointly measurable random map. Assume that $\mu\in \iP_c(\R^n)$ and $A\subset \R^n$ is an analytic set. Then, almost surely,  
\[\Dim \mu_Z\geq \Dim_Z \mu \quad \textrm{and} \quad \Dim Z(A)\geq \Dim_Z A. \]	
\end{theorem}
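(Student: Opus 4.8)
The plan is to establish the measure inequality $\Dim\mu_Z\ge\Dim_Z\mu$ first and to deduce the image inequality from it. For the deduction, note that each section $t\mapsto Z(t)$ is Borel (sections of a jointly measurable map are measurable), so $Z(A)$ is an analytic subset of $\R^m$ almost surely, and $\mu_Z$ is carried by $Z(\supp\mu)\subset Z(A)$. Writing the lower packing dimension of a finite Borel measure $\nu$ as $\Dim\nu=\inf\{\Dim E:\nu(\R^m\setminus E)=0\}$, monotonicity gives $\Dim Z(A)\ge\Dim\mu_Z$ for every $\mu\in\iP_c(A)$. Choosing $\mu_k\in\iP_c(A)$ with $\Dim_Z\mu_k\to\Dim_Z A$ and intersecting the countably many almost sure events, we obtain $\Dim Z(A)\ge\sup_k\Dim_Z\mu_k=\Dim_Z A$ almost surely, which is the second statement.

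Fix $\gamma<\Dim_Z\mu$. Since the defining condition of $\Dim_Z$ is monotone in $\gamma$ (along a sequence $r_j$ with $r_j^{-\gamma'}\E(\mu_Z(B(Z(t),r_j)))\to 0$ one also has $r_j^{-\gamma}\E(\cdots)\to0$ for $\gamma<\gamma'$), the hypothesis yields that for $\mu$-a.e.\ $t$
\[ \liminf_{r\to0+} r^{-\gamma}\,\E\big(\mu_Z(B(Z(t),r))\big)=0. \]
I would reduce the target to a pointwise density statement using the standard density comparison for packing measures: if $\nu$ is a finite Borel measure on $\R^m$ and $\liminf_{r\to0+}r^{-\gamma}\nu(B(y,r))=0$ for $\nu$-a.e.\ $y$, then $\Dim\nu\ge\gamma$ (restrict $\nu$ to the full-measure set of such $y$ and compare packing $\gamma$-measure with $\nu$ on any $E$ with $\nu(\R^m\setminus E)=0$). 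By the change of variables $\nu=\mu_Z$, it therefore suffices to prove that almost surely, for $\mu$-a.e.\ $t$,
\[ \liminf_{r\to0+} r^{-\gamma}\,\mu_Z(B(Z(t),r))=0. \]

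The heart of the argument is to pass from the expectation in the hypothesis to this almost sure pointwise statement by Fatou's lemma in the \emph{correct} direction. Joint measurability of $Z$ makes $(t,\omega)\mapsto\mu_Z(B(Z(t),r))=\int\mathbbm{1}[|Z(s)-Z(t)|\le r]\,d\mu(s)$ measurable, so the liminf (taken over rational $r$) is measurable and Fubini applies. For fixed $t$, since the integrand is nonnegative,
\[ \E\Big(\liminf_{r\to0+} r^{-\gamma}\mu_Z(B(Z(t),r))\Big)\le\liminf_{r\to0+}\E\big(r^{-\gamma}\mu_Z(B(Z(t),r))\big)=\liminf_{r\to0+} r^{-\gamma}\E\big(\mu_Z(B(Z(t),r))\big), \]
and the right-hand side vanishes for $\mu$-a.e.\ $t$. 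Integrating in $t$ and using Fubini,
\[ \E\left(\int \liminf_{r\to0+} r^{-\gamma}\mu_Z(B(Z(t),r))\,d\mu(t)\right)=\int \E\Big(\liminf_{r\to0+} r^{-\gamma}\mu_Z(B(Z(t),r))\Big)\,d\mu(t)=0. \]
As the outer integrand is nonnegative, it is $0$ almost surely, which is exactly the displayed pointwise density statement. Letting $\gamma\uparrow\Dim_Z\mu$ through a countable sequence yields $\Dim\mu_Z\ge\Dim_Z\mu$ almost surely.

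The main obstacle is conceptual rather than computational: recognizing that Fatou's inequality $\E\liminf\le\liminf\E$ points in precisely the direction needed to turn the expectation-based definition of $\Dim_Z$ into an almost sure lower density bound, and that the lower packing dimension of a measure is governed by such lower densities (liminf), not by upper densities. The remaining care is bookkeeping: verifying the measurability needed for Fubini, the monotonicity of the $\Dim_Z$ condition in $\gamma$, and the analyticity of $Z(A)$ so that the measure characterization $\Dim(\cdot)=\sup_\nu\Dim\nu$ can be invoked for the set statement.
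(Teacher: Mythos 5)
Your proof is correct and follows essentially the same route as the paper's: the Fatou-then-Fubini exchange (whose measurability prerequisites the paper isolates in Lemmas~\ref{l:G}, \ref{l:1} and~\ref{l:pr}) converts the expectation condition defining $\Dim_Z\mu$ into the almost sure statement that $\liminf_{r\to 0+}r^{-\gamma}\mu_Z(B(y,r))=0$ for $\mu_Z$-a.e.\ $y$, after which the paper invokes Theorem~\ref{t:pmu} (Hu--Taylor) where you sketch a density argument, and it deduces $\Dim Z(A)\ge \Dim\mu_Z$ from Lemma~\ref{l:h} where you re-derive the easy half of that lemma, both arguments concluding with the same supremum over a countable sequence of measures. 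The one slip to repair is your identification $\Dim\nu=\inf\{\Dim E:\nu(\R^m\setminus E)=0\}$: that formula gives the \emph{upper} packing dimension, which can strictly exceed the paper's $\Dim\nu=\inf\{\Dim E: E \textrm{ Borel},\ \nu(E)>0\}$, so your density comparison must be applied to an arbitrary Borel set of positive $\mu_Z$-measure rather than only to full-measure sets (or one simply cites Theorem~\ref{t:pmu}); with that adjustment both inequalities hold for the intended quantity.
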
 

\begin{definition} \label{d:f*}
For a map $h\colon \R^n\to \R^d$ let 
$h^{*}\colon \R^n\to \R^{n+d}$ be defined as 
\[h^{*}(t)=(t,h(t)).\]
Clearly we have $h^{*}(A)=\graph(h|_{A})$ for all $A\subset \R^n$.
\end{definition} 

\begin{definition} \label{d:reg} Let $X=\{(X_1(t),\dots,X_d(t)): t\in \R^n\}$ be a Gaussian random field in $\R^d$ such that $X_1,\dots,X_d$ are independent, centered Gaussian random fields with almost surely continuous sample paths. The \emph{canonical pseudometric} for $X_i$ is defined as 
\[\rho_i(t,s)=\sqrt{\E (X_i(t)-X_i(s))^2} \quad \textrm{for all } t,s\in \R^n.\]
We say that $X$ is \emph{regular} on $A\subset \R^n$ if there are sets $A_k$ such that $A= \bigcup_{k=1}^{\infty} A_k$ and for all $k,N\in \N^+$ there is a $c\in \R^+$ such that for all $1\leq i\leq d$ we have 
\begin{equation*}
\rho_i(t,s)\leq c\log^{-N}\left(1/|t-s|\right)\quad \textrm{for all} \quad  t,s\in A_k. 
\end{equation*} 
\end{definition}

The main result of the paper is the following.

\begin{mt} \label{t:main}  Let $X=\{(X_1(t),\dots,X_d(t)): t\in \R^n\}$ be a Gaussian random field in $\R^d$ such that $X_1,\dots,X_d$ are independent, centered Gaussian random fields with almost surely continuous sample paths. Let $f\colon \R^n\to \R^d$ be a Borel map, and let $A\subset \R^n$ be an analytic set on which $X$ is regular. If $Z=X+f$ or $Z=(X+f)^*$ then, almost surely,
\begin{equation*} \Dim Z(A)=\Dim_Z A.
\end{equation*} 	
\end{mt}

We may assume that \emph{every} sample path of $X$ is continuous. Then as a sum of two jointly measurable random maps, both $Z=X+f$ and $Z=(X+f)^{*}$ are jointly measurable. Therefore Theorem~\ref{t:lbb} yields the lower bound for $\Dim Z(A)$ in the Main Theorem. For the definition and properties of fractional Brownian motion see the Preliminaries Section. 

\begin{corollary} 
Let $\{X(t): t\in \R^n\}$ be a $d$-dimensional fractional Brownian motion and let $f\colon \R^n\to \R^d$ be a Borel map. Let $A\subset \R^n$ be an analytic set. If $Z=X+f$ or $Z=(X+f)^*$ then, almost surely,
\begin{equation*} \Dim Z(A)=\Dim_Z A.
\end{equation*} 	
\end{corollary}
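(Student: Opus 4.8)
The plan is to deduce the Corollary directly from the Main Theorem; the entire substance lies there and in Theorem~\ref{t:lbb}, so the only genuine task is to verify that a $d$-dimensional fractional Brownian motion satisfies the two standing hypotheses of the Main Theorem: that its coordinate fields are independent, centered and have continuous sample paths, and that it is regular on the analytic set $A$ in the sense of Definition~\ref{d:reg}. The former is immediate from the definition recalled in the Preliminaries: a $d$-dimensional fractional Brownian motion of Hurst index $H\in(0,1)$ has the form $X=(X_1,\dots,X_d)$, where $X_1,\dots,X_d$ are independent copies of a real-valued fractional Brownian field, each centered and, after passing to a continuous modification as permitted by the remark following the Main Theorem, with continuous sample paths.

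For the regularity I would first record the canonical pseudometric. By the stationarity and self-similarity of fractional Brownian motion one has
\[\rho_i(t,s)=\sqrt{\E(X_i(t)-X_i(s))^2}=|t-s|^H\qquad(1\le i\le d).\]
The key elementary observation is that for each fixed $N\in\N^+$ the function $g(u)=u^H\log^N(1/u)$ is bounded on $(0,1)$: indeed $g(u)\to 0$ both as $u\to 0^+$ (a power dominates a logarithm) and as $u\to 1^-$ (since $\log(1/u)\to 0$), while $g$ is continuous in between, so $M_N:=\sup_{0<u<1}g(u)<\infty$. Consequently $u^H\le M_N\log^{-N}(1/u)$ for every $u\in(0,1)$, that is, $\rho_i(t,s)\le M_N\log^{-N}(1/|t-s|)$ whenever $0<|t-s|<1$.

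To assemble a decomposition meeting Definition~\ref{d:reg} I would cover $\R^n$ by countably many sets $C_k$ of diameter strictly less than $1$ and set $A_k=A\cap C_k$, so that $A=\bigcup_{k}A_k$ and any two distinct points of a single $A_k$ lie within distance $<1$; note that the $A_k$ need not be analytic, which is all the definition demands. Then for every $k$ and every $N$ the displayed bound holds with the single constant $c=M_N$ for all $1\le i\le d$ and all $t,s\in A_k$, so $X$ is regular on $A$. Applying the Main Theorem to $Z=X+f$ and to $Z=(X+f)^*$ then yields $\Dim Z(A)=\Dim_Z A$ almost surely.

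I do not expect a real obstacle at this stage: the lower bound $\Dim Z(A)\ge\Dim_Z A$ is supplied by Theorem~\ref{t:lbb} through the joint measurability of $X+f$ and $(X+f)^*$, and the matching upper bound is precisely the content of the Main Theorem once regularity has been checked. The only point requiring any attention is the strict diameter bound in the decomposition, which is needed so that $\log^{-N}(1/|t-s|)$ is well defined and positive on each $A_k$.
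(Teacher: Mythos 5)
Your proof is correct and follows exactly the route the paper intends: the corollary is an immediate application of the Main Theorem once one checks that fractional Brownian motion is regular in the sense of Definition~\ref{d:reg}, which you do via the identity $\rho_i(t,s)=|t-s|^{\alpha}$, the elementary bound $u^{\alpha}\le M_N\log^{-N}(1/u)$ on $(0,1)$, and a decomposition of $A$ into pieces of diameter less than $1$ (needed precisely so that $\log(1/|t-s|)>0$). Your verification of the remaining hypotheses (independence, centeredness, a.s.\ continuity, and joint measurability feeding into Theorem~\ref{t:lbb}) matches what the paper leaves implicit, so there is nothing to add.
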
 

Let $\{W(t): t\in [0,\infty)^n\}$ be a Brownian sheet in $\R^d$, see \cite[Chapter~8]{Ad} for the definition. Extend the process such that $W(t)\equiv \mathbf{0}$ for all $t\in \R^n\setminus [0,\infty)^n$. Since $W$ is regular on $\R^n$ by \cite[Lemma~8.9.1]{Ad}, we obtain the following.

\begin{corollary} 
Let $\{W(t): t\in \R^n\}$ be a $d$-dimensional Brownian sheet and let $f\colon \R^n\to \R^d$ be a Borel map. Let $A\subset \R^n$ be an analytic set. If $Z=W+f$ or $Z=(W+f)^*$ then, almost surely,
\begin{equation*} \Dim Z(A)=\Dim_Z A.
\end{equation*} 	
\end{corollary}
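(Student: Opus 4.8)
The plan is to derive this corollary directly from the Main Theorem applied to the Gaussian random field $X=W$; the entire task therefore reduces to checking that the extended Brownian sheet meets the hypotheses of that theorem on the analytic set $A$. Once this is done, the equality $\Dim Z(A)=\Dim_Z A$ for $Z=W+f$ and $Z=(W+f)^{*}$ follows at once.

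I would first record the structural hypotheses. By construction of the $d$-dimensional Brownian sheet, its coordinate fields $W_1,\dots,W_d$ are independent, each is a centered Gaussian random field, and each has almost surely continuous sample paths on $[0,\infty)^n$. Extending by $W(t)\equiv\mathbf{0}$ on $\R^n\setminus[0,\infty)^n$ preserves almost sure continuity, because every $W_i$ already vanishes on the part of $\partial[0,\infty)^n$ where some coordinate is $0$, so attaching the zero field across that boundary creates no jump. Thus $W$ is precisely a field of the type treated in the Main Theorem, and, as noted there, both $Z=W+f$ and $Z=(W+f)^{*}$ are jointly measurable as sums of jointly measurable maps.

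The one point requiring care is regularity of $W$ on $A$, and this is where I expect the only real work. It suffices to prove regularity on all of $\R^n$, since a witnessing decomposition $\R^n=\bigcup_k A_k$ restricts to $A=\bigcup_k(A\cap A_k)$. I would choose the $A_k$ to be a countable cover of $\R^n$ by sets of diameter strictly less than $1$, each contained in some box $[-m,m]^n$; the diameter condition guarantees $\log(1/|t-s|)>0$ for distinct $t,s\in A_k$, while the boundedness localizes the location-dependent variance of the sheet. From the covariance $\E(W_i(t)W_i(s))=\prod_{j=1}^n(t_j\wedge s_j)$ on $[0,\infty)^n$ one obtains, for $t,s\in[-m,m]^n$, a bound $\rho_i(t,s)=\sqrt{\E(W_i(t)-W_i(s))^2}\le c_m|t-s|^{1/2}$; this holds even when one of $t,s$ lies outside the orthant, since then the nearby vanishing of $W_i$ on $\partial[0,\infty)^n$ again forces the corresponding factor to be small (this is also the content of \cite[Lemma~8.9.1]{Ad}). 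Because the function $r\mapsto r^{1/2}\log^{N}(1/r)$ is bounded on $(0,1)$---it tends to $0$ both as $r\to0^{+}$ and as $r\to1^{-}$---the supremum of $\rho_i(t,s)\log^{N}(1/|t-s|)$ over distinct $t,s\in A_k$ is finite. This produces, for every $k,N\in\N^+$ and every $1\le i\le d$, a constant $c$ with $\rho_i(t,s)\le c\log^{-N}(1/|t-s|)$ on $A_k$, which is exactly the regularity demanded by Definition~\ref{d:reg}.

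With the structural hypotheses and regularity verified, the Main Theorem applies with $X=W$ and gives, almost surely, $\Dim Z(A)=\Dim_Z A$ for both choices of $Z$. The only genuine obstacle is the regularity check, and even it is mild: the log-power modulus of Definition~\ref{d:reg} is far weaker than the Hölder-$\tfrac12$ behaviour of the sheet, so the real content is the bookkeeping of choosing small-diameter pieces $A_k$ and handling the seam along $\partial[0,\infty)^n$ introduced by the extension.
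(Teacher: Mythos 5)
Your proposal is correct and follows essentially the same route as the paper: both obtain the corollary by applying the Main Theorem with $X=W$, the only hypothesis requiring verification being regularity of the zero-extended sheet, which as you note is immediate from a H\"older-$\tfrac12$ bound on the canonical pseudometric over bounded pieces of $\R^n$. The only difference is cosmetic: the paper simply cites \cite[Lemma~8.9.1]{Ad} for this regularity, whereas you verify it by hand (including the seam along $\partial[0,\infty)^n$), which is a routine but correctly executed check.
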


\begin{problem}
Can we extend the Main Theorem such that 
\begin{itemize}  
\item $X_1,\dots,X_d$ are not necessarily independent, 
\item  $Z(t)=f(t,X(t))$ with a Borel map $f\colon \R^{n+d}\to \R^m$?
\end{itemize} 
\end{problem}

\subsection{Related work and further results} \label{ss:fr} 

In this subsection we discuss the related results in detail, including the case of Hausdorff dimension as well. Particular emphasis will be given to fractional Brownian motion. Our sharp lower bound of the packing dimension of the graph of fractional Brownian motion follows from Theorems~\ref{t:ad} and \ref{t:ad2}.   

\subsubsection{The case of Hausdorff dimension}
Let $\dim A$ denote the Hausdorff dimension of a set $A\subset  \R^n$, for its definition and properties see Falconer~\cite{F} or Mattila~\cite{Ma}. The following theorem is due to Kahane~\cite[Chapter~18]{Kh}.

\begin{theorem}[Kahane] \label{t:Kh}
Let $\{X(t): t\in \R^n\}$ be a $d$-dimensional fractional Brownian motion of Hurst index $\alpha\in (0,1)$. For a Borel set $A\subset \R^n$, almost surely, 
\begin{align*}
\dim X(A) &=\min\{(1/\alpha)\dim A, d \}, \\
\dim X^{*}(A)&=\min \{ (1/\alpha)\dim A, \dim A+d(1-\alpha)\} 
\end{align*}  
\end{theorem}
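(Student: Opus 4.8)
The plan is to establish matching upper and lower bounds, treating the image and the graph in parallel with two standard toolkits: deterministic H\"older covering estimates for the upper bounds, and the potential-theoretic (energy) method together with Frostman's lemma for the lower bounds. The two features of a $d$-dimensional fractional Brownian motion $X$ of Hurst index $\alpha$ that drive everything are that each coordinate increment $X_i(t)-X_i(s)$ is centered Gaussian with variance $|t-s|^{2\alpha}$, the coordinates being independent, and that $X$ is almost surely locally $\beta$-H\"older continuous for every $\beta<\alpha$. In particular $X(t)-X(s)$ is distributed as $|t-s|^{\alpha}G$ for a standard $d$-dimensional Gaussian vector $G$, which is what makes the relevant expectations computable.

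For the upper bounds I would argue pathwise. Since a $\beta$-H\"older map multiplies Hausdorff dimension by at most $1/\beta$, the bound $\dim X(A)\le (1/\beta)\dim A$ holds for every $\beta<\alpha$, and letting $\beta\uparrow\alpha$ gives $\dim X(A)\le (1/\alpha)\dim A$; combined with $X(A)\subset\R^d$ this yields the upper bound in the first formula. The map $X^{*}=(\id,X)$ is also locally $\beta$-H\"older, so the same reasoning gives $\dim X^{*}(A)\le (1/\alpha)\dim A$. For the second graph bound I would use a refined covering: given $s>\dim A$, take a cover of $A$ by balls of radii $r_i$ with $\sum_i r_i^{s}$ small; over a ball of radius $r_i$ the graph has $t$-extent $r_i$ and, by $\beta$-H\"older continuity, $X$-extent $\lesssim r_i^{\beta}$, so it is covered by $\lesssim r_i^{-d(1-\beta)}$ balls of radius $r_i$ in $\R^{n+d}$. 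Summing $r_i^{-d(1-\beta)}\cdot r_i^{\gamma}$ with $\gamma=s+d(1-\beta)$ bounds the Hausdorff content, whence $\dim X^{*}(A)\le s+d(1-\beta)$; letting $s\downarrow\dim A$ and $\beta\uparrow\alpha$ gives $\dim X^{*}(A)\le \dim A+d(1-\alpha)$.

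For the lower bounds I would invoke Frostman's lemma: for any $s<\dim A$ there is $\mu\in\iP_c(A)$ with finite energy $I_s(\mu)=\iint |u-v|^{-s}\,d\mu(u)\,d\mu(v)<\infty$. For the image, fix $\gamma<\min\{(1/\alpha)\dim A,d\}$ and choose $\mu$ with $I_{\alpha\gamma}(\mu)<\infty$. Since $\gamma<d$, the scaling $X(t)-X(s)\stackrel{d}{=}|t-s|^{\alpha}G$ gives $\E |X(t)-X(s)|^{-\gamma}=c\,|t-s|^{-\alpha\gamma}$, so by Fubini $\E I_\gamma(\mu_X)=c\,I_{\alpha\gamma}(\mu)<\infty$; hence $\mu_X$ almost surely has finite $\gamma$-energy and $\dim X(A)\ge\gamma$, and taking the supremum over $\gamma$ closes the image case. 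For the graph I would compute, with $r=|t-s|$,
\[
g(r)=\E\bigl(r^{2}+|X(t)-X(s)|^{2}\bigr)^{-\gamma/2}=r^{-\alpha\gamma}\int_{\R^{d}}\bigl(r^{2(1-\alpha)}+|z|^{2}\bigr)^{-\gamma/2}\,(2\pi)^{-d/2}e^{-|z|^{2}/2}\,dz,
\]
the key estimate being $g(r)\lesssim r^{-\alpha\gamma}$ when $\gamma<d$ and $g(r)\lesssim r^{-\gamma+d(1-\alpha)}$ when $\gamma>d$ (with a logarithmic correction at $\gamma=d$). Feeding this into $\E I_\gamma(\mu_{X^{*}})=\iint g(|u-v|)\,d\mu(u)\,d\mu(v)$ shows the graph energy is finite precisely when $\alpha\gamma<\dim A$ in the first regime and when $\gamma-d(1-\alpha)<\dim A$ in the second, i.e.\ whenever $\gamma<\min\{(1/\alpha)\dim A,\dim A+d(1-\alpha)\}$, yielding the matching lower bound.

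The main obstacle is the Gaussian integral estimate for $g(r)$ and its correct pairing with the Frostman energies. The quantity $\int_{\R^{d}}(a^{2}+|z|^{2})^{-\gamma/2}e^{-|z|^{2}/2}\,dz$, with $a=r^{1-\alpha}\to 0$, changes character across $\gamma=d$: it stays bounded for $\gamma<d$ but blows up like $a^{d-\gamma}$ for $\gamma>d$, and it is exactly this dichotomy that produces the two competing terms in $\min\{(1/\alpha)\dim A,\dim A+d(1-\alpha)\}$. Keeping the regime of $\gamma$ and the choice of $s<\dim A$ aligned is the delicate bookkeeping that ties the whole argument together.
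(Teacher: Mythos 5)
This statement is quoted in the paper as a classical result of Kahane and is cited to \cite[Chapter~18]{Kh} without proof, so there is no in-paper argument to compare against. Your proposal is correct and is essentially the classical proof from the cited source: pathwise H\"older covering estimates for the upper bounds, and Frostman's lemma plus the energy method for the lower bounds, with the Gaussian kernel computation $\E\bigl(r^{2}+|X(t)-X(s)|^{2}\bigr)^{-\gamma/2}\lesssim r^{-\alpha\gamma}$ for $\gamma<d$ and $\lesssim r^{d(1-\alpha)-\gamma}$ for $\gamma>d$ producing exactly the two competing terms in the graph formula; the standard caveats you would still need to spell out (reduction to compact subsets of nearly full dimension for Borel $A$, countable stability to handle unbounded $A$, and joint measurability for Fubini) are all routine.
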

  
If $n\leq \alpha d$ then Monrad and Pitt~\cite{MP} proved the next uniform dimension result.

\begin{theorem}[Monrad--Pitt] \label{t:MP} Let $0<\alpha<1$ and let $n,d\in \N^+$ with $n\leq \alpha d$. Let $\{X(t): t\in \R^n\}$ be a $d$-dimensional fractional Brownian motion of Hurst index $\alpha$. Then, almost surely, for every Borel set $A\subset \R^n$ we have
\begin{equation} \label{eq:ud} \dim X(A)=\dim X^{*}(A)=(1/\alpha) \dim A. 
\end{equation} 
\end{theorem}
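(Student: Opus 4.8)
The plan is to prove the two inequalities $\le$ and $\ge$ separately, to reduce to a fixed compact domain, and to deduce the graph statement from the image statement. Note first that the hypothesis $n\le\alpha d$ forces $\alpha^{-1}\dim A\le \alpha^{-1}n\le d$, so the minimum in Kahane's formula (Theorem~\ref{t:Kh}) is always attained by its first term and the expected common value is $\alpha^{-1}\dim A$; the whole content of a \emph{uniform} dimension result lies in interchanging the quantifiers ``for every $A$'' and ``almost surely''. I would fix a single version of $X$ whose sample paths satisfy the standard regularity statements below, and reduce to the domain $[0,1]^n$ by decomposing $\R^n$ into countably many unit cubes and using the stationarity of the increments of $X$.

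For the upper bound I would use the uniform local Hölder continuity of fractional Brownian motion: almost surely, for every $\beta<\alpha$ the restriction of $X$ to $[0,1]^n$ is $\beta$-Hölder. Since a $\beta$-Hölder map $g$ satisfies $\dim g(E)\le\beta^{-1}\dim E$ for \emph{every} set $E$ at once, letting $\beta\uparrow\alpha$ gives $\dim X(A)\le\alpha^{-1}\dim A$ simultaneously for all Borel $A$. The same bound applies to $t\mapsto X^{*}(t)=(t,X(t))$, so $\dim X^{*}(A)\le\alpha^{-1}\dim A$ as well. Conversely the coordinate projections $\R^{n+d}\to\R^{d}$ and $\R^{n+d}\to\R^{n}$ are $1$-Lipschitz and carry $X^{*}(A)$ onto $X(A)$ and onto $A$; hence $\dim X^{*}(A)\ge\dim X(A)$, and therefore once the image lower bound $\dim X(A)\ge\alpha^{-1}\dim A$ is established the two graph identities follow immediately.

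The crux is the uniform lower bound $\dim X(A)\ge\alpha^{-1}\dim A$, which I would reduce to the following covering lemma: \emph{almost surely there exist a finite constant $C$ and an exponent $p$ such that, simultaneously for every $y\in\R^{d}$ and every dyadic $r=2^{-j}$, the preimage $X^{-1}(B(y,r))\cap[0,1]^{n}$ is covered by at most $C\,j^{p}$ cubes of side $r^{1/\alpha}$.} Granting the lemma, suppose $\dim X(A)<\gamma$. Then $X(A)$ admits covers $\{B(y_{i},r_{i})\}$ with $\sum_{i}r_{i}^{\gamma}$ arbitrarily small; the sets $X^{-1}(B(y_{i},r_{i}))$ cover $A$, and replacing each by its $C\,j_{i}^{p}$ cubes of side $r_{i}^{1/\alpha}$ produces a cover of $A$ with
\[\iH^{\alpha\gamma}(A)\ \le\ C\sum_{i} j_{i}^{\,p}\,\bigl(r_{i}^{1/\alpha}\bigr)^{\alpha\gamma}\ =\ C\sum_{i} j_{i}^{\,p}\,r_{i}^{\gamma},\]
which is still small because the polylogarithmic weights $j_{i}^{p}=(\log_2(1/r_{i}))^{p}$ are dimensionally negligible. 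Hence $\dim A\le\alpha\gamma$; letting $\gamma\downarrow\dim X(A)$ gives $\dim X(A)\ge\alpha^{-1}\dim A$.

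The main obstacle, and the only place where $n\le\alpha d$ enters, is the covering lemma. Here I would invoke Pitt's strong local nondeterminism for each (independent) coordinate field,
\[\Var\bigl(X_{1}(t)\mid X_{1}(t_{1}),\dots,X_{1}(t_{m})\bigr)\ \ge\ c\min_{1\le l\le m}|t-t_{l}|^{2\alpha},\]
together with the uniform modulus of continuity, which makes $\diam X(I)$ comparable to $r$ for a cube $I$ of side $r^{1/\alpha}$, up to a logarithmic factor. A first-moment computation then gives, for a fixed dyadic $r$ and a fixed center $y$,
\[\E\Bigl(\#\{I:\ \diam I=r^{1/\alpha},\ X(I)\cap B(y,r)\ne\emptyset\}\Bigr)\ \lesssim\ r^{-n/\alpha}\cdot r^{d}\ =\ r^{\,d-n/\alpha},\]
which stays bounded precisely because $d-n/\alpha\ge 0$ under the hypothesis $n\le\alpha d$ (this is exactly where the regime condition is essential). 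Upgrading this expectation bound to the almost-sure \emph{uniform} statement is the genuinely hard step: one must control the count simultaneously over the $\asymp r^{-d}$ grid centers at each scale and over all dyadic scales $r=2^{-j}$, and then pass from grid centers to arbitrary $y$ via the inclusion $B(y,r)\subset B(y',2r)$ for a nearby grid point $y'$. I would derive the required high-moment or exponential tail bounds for the count from strong local nondeterminism, which renders the contributions of well-separated cubes nearly independent, absorb the union bound over centers and scales into Gaussian tail factors, and conclude by Borel--Cantelli; the slack that survives this union bound is exactly the polylogarithmic factor $j^{p}$, which is harmless for Hausdorff dimension.
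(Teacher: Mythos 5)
The paper's own ``proof'' of this theorem consists of two things only: it cites Monrad--Pitt \cite{MP} for the image equation $\dim X(A)=(1/\alpha)\dim A$, and it supplies the short deduction of the graph equation from it --- the $1$-Lipschitz coordinate projection $\R^{n+d}\to\R^d$ maps $X^{*}(A)$ onto $X(A)$, so $\dim X(A)\le \dim X^{*}(A)$, while the local $\gamma$-H\"older continuity of $X^{*}$ for every $\gamma<\alpha$ gives $\dim X^{*}(A)\le (1/\alpha)\dim A$. Your treatment of the graph part and of the upper bounds is exactly this argument, so on the portion the paper actually proves you agree with it verbatim.

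Where you diverge is that you also attempt to reprove the image lower bound, which the paper simply imports from \cite{MP}. Your outline is the correct one --- it is in fact the strategy of Kaufman and of Monrad--Pitt themselves: a uniform covering lemma (preimages of $r$-balls covered by polylogarithmically many cubes of side $r^{1/\alpha}$, simultaneously in $y$ and in dyadic $r$), with the hypothesis $n\le\alpha d$ entering precisely so that the first-moment count $r^{-n/\alpha}\cdot r^{d}$ stays bounded. However, as a self-contained proof there is a genuine gap: the covering lemma is the entire technical content of the cited theorem, and you only state a plan for it (``derive high-moment or exponential tail bounds\dots conclude by Borel--Cantelli'') rather than prove it; the near-independence of well-separated cubes via strong local nondeterminism, the actual moment estimates, and the union bound over scales and centers are all nontrivial, and there is also a degenerate-point issue at $t=\mathbf{0}$ (where $X(\mathbf{0})=\mathbf{0}$, so the uniform density bound $\P(|X(t)-y|\le r)\lesssim r^{d}$ fails for cubes near the origin) that your first-moment computation glosses over. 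A separate minor slip: from $\sum_i r_i^{\gamma}$ small you cannot conclude that $\sum_i j_i^{p}r_i^{\gamma}$ is small, since $j_i^{p}=(\log_2(1/r_i))^{p}$ is unbounded; the logarithms must be absorbed by sacrificing an exponent, yielding $\iH^{\alpha\gamma'}(A)=0$ for every $\gamma'>\gamma$ rather than smallness of $\iH^{\alpha\gamma}(A)$ itself. This is harmless here because you let $\gamma\downarrow\dim X(A)$ at the end anyway, but as written the displayed inequality does not deliver its stated conclusion.
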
 
Monrad and Pitt only proved $\dim X (A)=(1/\alpha) \dim A$, but the other equation easily follows. Indeed, since Hausdorff dimension cannot increase under projections, $\dim X(A)\leq \dim X^{*}(A)$ holds. As $X^{*}$ is almost surely locally $\gamma$-H\"older continuous for all $\gamma<\alpha$, we also have $\dim X^{*}(A)\leq (1/\alpha) \dim A$. 

In the case $n>\alpha d$ Monrad and Pitt~\cite{MP} proved that a $d$-dimensional fractional Brownian motion $\{X(t): t\in \R^n\}$ satisfies
$\dim X^{-1}(\mathbf{0})=n-\alpha d$, witnessing that none of the equations of \eqref{eq:ud} holds uniformly. 
 
The case of fractional Brownian motion with drift was recently solved by Peres and Sousi~\cite{PS}. In order to state their result we need the following definition. 

\begin{definition} 
Let $A\subset \R\times \R^d$ and $\alpha\in [0,1]$. For all 
$\beta>0$ the \emph{$\alpha$-parabolic $\beta$-dimensional Hausdorff content} of $A$ is defined as
\[\Psi^{\beta}_{\alpha}(A)=\inf\left\{\sum_j \delta^{\beta}_j: A\subset 
[a_j, a_j+\delta_j]\times [b_{j,1},b_{j,1}+\delta_j^{\alpha}] 
\times \dots \times   [b_{j,d},b_{j,d}+\delta_j^{\alpha}]\right\},\]
where the infimum is taken over all countable covers of $A$ by rectangles of the form given above. The \emph{$\alpha$-parabolic Hausdorff dimension of $A$} 
is defined by 
\[\dim_{\Psi,\alpha}(A)=\inf\{\beta: \Psi^{\beta}_{\alpha}(A)=0\}. \]
\end{definition} 

The above definition was introduced by Kaufman~\cite{Ku} in the case $\alpha=1/2$ to study the Hausdorff dimension of $B^{-1}(F)\cap E$, where $B\colon [0,\infty)\to \R$ is a standard Brownian motion, and $F\subset \R$ and $E\subset [0,\infty)$ are fixed compact sets.

\begin{theorem}[Peres--Sousi] Let $\{X(t): t\in [0,1]\}$ be a $d$-dimensional fractional Brownian motion of Hurst index $\alpha\in (0,1)$. Let $f\colon \R\to \R^d$ be a Borel function and let $A\subset [0,1]$ be a Borel set. If $\beta=\dim_{\Psi,\alpha} \graph(f|_{A})$, then almost surely
\begin{align*} 
\dim \, (X+f)(A)&=\min\{\beta/\alpha, d\}, \\
\dim \, (X+f)^*(A) &=\min\{\beta/\alpha, \beta+d(1-\alpha)\}.
\end{align*} 	
 \end{theorem}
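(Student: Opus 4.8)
The plan is to prove matching upper and lower bounds, treating the image and the graph in parallel and exploiting the exact scaling of fractional Brownian motion. The first observation is that $\dim_{\Psi,\alpha}E$ is nothing but the Hausdorff dimension of $E\subset\R\times\R^d$ with respect to the \emph{$\alpha$-parabolic metric}
\[ \rho_{\alpha}\bigl((t,x),(s,y)\bigr)=\max\{\,|t-s|,\ |x-y|^{1/\alpha}\,\}, \]
since a covering rectangle of time-width $\delta$ has $\rho_{\alpha}$-diameter comparable to $\delta$. Thus $\beta=\dim_{\Psi,\alpha}\graph(f|_A)$ is a genuine Hausdorff dimension, and both Frostman's lemma and the usual covering heuristics are available in the metric space $(\R^{1+d},\rho_{\alpha})$. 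Note that $\graph(f|_A)$ is Borel because $f$ is Borel and $A$ is Borel. Throughout I use the uniform modulus of continuity of $X$: almost surely there is a random $C$ with $|X(t)-X(s)|\le C|t-s|^{\alpha}\log^{1/2}(1/|t-s|)$ for all sufficiently close $t,s\in[0,1]$.

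\textbf{Upper bounds.} Fix $\beta'>\beta$, so $\Psi^{\beta'}_{\alpha}(\graph(f|_A))=0$ and there are covers of $\graph(f|_A)$ by parabolic rectangles $R_j=[a_j,a_j+\delta_j]\times\prod_{i=1}^d[b_{j,i},b_{j,i}+\delta_j^{\alpha}]$ with $\sum_j\delta_j^{\beta'}$ arbitrarily small. For $t\in[a_j,a_j+\delta_j]\cap A$ the drift $f(t)$ lies in a box of side $\delta_j^{\alpha}$, while the oscillation of $X$ there is at most $C\delta_j^{\alpha}\log^{1/2}(1/\delta_j)$; hence $(X+f)([a_j,a_j+\delta_j]\cap A)$ has diameter $\lesssim\delta_j^{\alpha}\log^{1/2}(1/\delta_j)$. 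This covers the image by balls of radius $r_j\asymp\delta_j^{\alpha}\log^{1/2}(1/\delta_j)$, and $\sum_j r_j^s\to0$ whenever $\alpha s>\beta'$, giving $\dim(X+f)(A)\le\beta/\alpha$; the bound $\le d$ is trivial. For the graph the same rectangles give $(X+f)^*([a_j,a_j+\delta_j]\cap A)$ of diameter $\lesssim\delta_j^{\alpha}\log^{1/2}$, yielding $\dim(X+f)^*(A)\le\beta/\alpha$; subdividing each such box instead into cubes of side $\delta_j$ needs $\lesssim(\delta_j^{-(1-\alpha)}\log^{1/2})^d$ cubes, so $\sum_j(\#\text{cubes})\,\delta_j^s\to0$ once $s-d(1-\alpha)>\beta'$, giving $\dim(X+f)^*(A)\le\beta+d(1-\alpha)$.

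\textbf{Lower bounds.} Fix $\beta'<\beta$ and use Frostman's lemma in $(\R^{1+d},\rho_{\alpha})$ to obtain a probability measure $\lambda$ on $\graph(f|_A)$ with $\lambda(B_{\rho_{\alpha}}(P,r))\le Cr^{\beta'}$. Let $\mu$ be its projection to the time axis and $\nu=(X+f)_{*}\mu$. The core computation is the Gaussian estimate (by scaling, $X(t)-X(s)=|t-s|^{\alpha}G$ with $G$ standard in $\R^d$)
\[ \E\,\bigl|\,|t-s|^{\alpha}G+\bigl(f(t)-f(s)\bigr)\,\bigr|^{-\gamma}\ \asymp\ \max\bigl(|t-s|^{\alpha},|f(t)-f(s)|\bigr)^{-\gamma}=\rho_{\alpha}(P,Q)^{-\alpha\gamma}\qquad(\gamma<d), \]
where $P=(t,f(t))$ and $Q=(s,f(s))$. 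Taking expectations in the Riesz energy and using Tonelli gives $\E\,I_{\gamma}(\nu)\asymp\iint\rho_{\alpha}(P,Q)^{-\alpha\gamma}\,d\lambda(P)\,d\lambda(Q)$, which is finite whenever $\alpha\gamma<\beta'$. Hence $I_{\gamma}(\nu)<\infty$ almost surely for every $\gamma<\min\{\beta/\alpha,d\}$, and $\dim(X+f)(A)\ge\min\{\beta/\alpha,d\}$ follows by intersecting over a sequence $\gamma\uparrow$. For the graph one pushes the same $\mu$ by $(X+f)^*$ and estimates $\E(|t-s|^2+|\,|t-s|^{\alpha}G+(f(t)-f(s))\,|^2)^{-\gamma/2}$. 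For $\gamma<d$ this is again $\asymp\rho_{\alpha}^{-\alpha\gamma}$; for $\gamma>d$ the extra time coordinate $|t-s|^2$ regularizes the singularity, and the kernel behaves like $\rho_{\alpha}^{-\alpha\gamma}$ in the space-dominant regime $|f(t)-f(s)|\gtrsim|t-s|^{\alpha}$ and like $\rho_{\alpha}^{-(\gamma-d(1-\alpha))}$ in the time-dominant regime $|f(t)-f(s)|\lesssim|t-s|^{\alpha}$. Splitting the double integral accordingly, finiteness requires $\alpha\gamma<\beta'$ and $\gamma-d(1-\alpha)<\beta'$, i.e.\ $\gamma<\min\{\beta/\alpha,\beta+d(1-\alpha)\}$, which yields the matching lower bound.

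\textbf{Main obstacle.} The delicate point is the behaviour of the Gaussian kernel for the graph when $\gamma>d$: here $|y|^{-\gamma}$ is not locally integrable on $\R^d$, and the entire effect of the graph formula — the shift by $d(1-\alpha)$ — comes from how the deterministic floor $|t-s|^2$ cuts off the non-integrable singularity of $|\,|t-s|^{\alpha}G+(f(t)-f(s))\,|^{-\gamma}$ near the origin. Obtaining the two-sided constants in $\E(|t-s|^2+|\cdots|^2)^{-\gamma/2}$ uniformly in $t,s$, with the correct crossover at $|f(t)-f(s)|\asymp|t-s|^{\alpha}$, and then organizing the energy integral into the space-dominant and time-dominant regimes so that the Frostman bound $\lambda(B_{\rho_{\alpha}}(P,r))\le Cr^{\beta'}$ applies in each, is the technical heart of the argument; the covering upper bounds and the $\gamma<d$ energy estimate are comparatively routine.
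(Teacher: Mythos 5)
This statement is not proved in the paper at all: it is quoted as background (the Peres--Sousi theorem, reference [PS]), so there is no internal proof to compare yours against. Judged on its own terms, your plan --- identify $\dim_{\Psi,\alpha}$ with Hausdorff dimension in the parabolic metric $\rho_\alpha$, prove upper bounds by pushing parabolic covers of the graph through the uniform modulus of continuity of $X$, and prove lower bounds from a parabolic Frostman measure combined with Gaussian energy estimates --- is the natural and correct route, and it is essentially how the original paper argues. The covering half and the image lower bound ($\gamma<d$) are sound, modulo one phrasing slip: it is not true that every $t\in[a_j,a_j+\delta_j]\cap A$ has $f(t)$ in the spatial box of $R_j$ (such a $t$ may have its graph point in a different rectangle); you should instead work with $A_j=\{t\in A: (t,f(t))\in R_j\}$, whose union over $j$ still covers $A$, and this costs nothing. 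You should also note that the parabolic Frostman lemma for the Borel set $\graph(f|_A)$ needs a justification (e.g.\ via parabolic net measures), since $f$ is only Borel.

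The one mathematically substantive imprecision is your claimed kernel dichotomy for the graph when $d<\gamma<d+1$. Write $\tau=|t-s|$ and $v=f(t)-f(s)$. In the space-dominant regime $|v|\geq\tau^{\alpha}$ the kernel is \emph{not} comparable to $\rho_\alpha^{-\alpha\gamma}$: the computation gives
\[
\E\bigl(\tau^{2}+|\tau^{\alpha}G+v|^{2}\bigr)^{-\gamma/2}\asymp |v|^{-\gamma}+e^{-c|v|^{2}/\tau^{2\alpha}}\,\tau^{d(1-\alpha)-\gamma},
\]
and at the crossover $|v|\asymp\tau^{\alpha}$ the second term is of order $\rho_\alpha^{-(\gamma-d(1-\alpha))}$, which strictly dominates $\rho_\alpha^{-\alpha\gamma}$ precisely because $\gamma>d$. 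So bounding the kernel by $\rho_\alpha^{-\alpha\gamma}$ throughout the space-dominant regime, as your sketch states, is false. The repair is easy and changes nothing in the end: trade the Gaussian factor for a polynomial, $e^{-c|v|^{2}/\tau^{2\alpha}}\lesssim_{M}(\tau^{\alpha}/|v|)^{M}$, and use $\tau\leq|v|^{1/\alpha}$ to absorb the resulting powers of $\tau$; this yields the unified bound
\[
\E\bigl(\tau^{2}+|\tau^{\alpha}G+v|^{2}\bigr)^{-\gamma/2}\lesssim \rho_\alpha^{-\alpha\gamma}+\rho_\alpha^{-(\gamma-d(1-\alpha))}
\]
valid in both regimes (the time-dominant computation is exactly as you say). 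Since your finiteness conditions $\alpha\gamma<\beta'$ and $\gamma-d(1-\alpha)<\beta'$ already control both exponents, the energy integral is finite exactly when $\gamma<\min\{\beta'/\alpha,\,\beta'+d(1-\alpha)\}$, and the matching lower bound follows. With these corrections your argument is complete.
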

  
\subsubsection{The case of packing dimension}  
The following uniform dimension result is due to Xiao~\cite{X2}, recall Theorem~\ref{t:MP} and the discussion thereafter. 

\begin{theorem}[Xiao] Let $0<\alpha<1$ and let $n,d\in \N^+$ with $n\leq \alpha d$. Let $\{X(t): t\in \R^n\}$ be a $d$-dimensional fractional Brownian motion of Hurst index $\alpha$. Then, almost surely, for every Borel set $A\subset \R^n$ we have
\begin{equation} \label{eq:ud2} \Dim X (A)=\Dim X^{*}(A)= (1/\alpha) \Dim A. 
\end{equation} 
\end{theorem}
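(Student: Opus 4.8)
The plan is to sandwich the common value between an upper bound that comes for free from Hölder continuity and a matching \emph{uniform} lower bound that is the real content. The key structural point is that the statement is uniform — almost surely the conclusion must hold simultaneously for \emph{all} Borel $A$ — so one cannot merely invoke an ``almost surely for a fixed $A$'' equality. For the upper bound, recall that $X$, and likewise $X^{*}$, is almost surely locally $\gamma$-H\"older continuous for every $\gamma<\alpha$. A $\gamma$-H\"older map $h$ satisfies $\overline{\dim}_{B}\,h(E)\le(1/\gamma)\overline{\dim}_{B}\,E$ for every bounded $E$, since a cover of $E$ by $N$ sets of diameter $\delta$ pushes forward to a cover of $h(E)$ by $N$ sets of diameter $\lesssim\delta^{\gamma}$; through the decomposition characterization $\Dim B=\inf\{\sup_i\overline{\dim}_{B}\,E_i: B=\bigcup_i E_i\}$ this upgrades to $\Dim h(E)\le(1/\gamma)\Dim E$. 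Applying this to $h=X^{*}$ for each $\gamma=\alpha-1/m$ (a countable intersection of almost sure events) and letting $m\to\infty$ gives, almost surely and for every Borel $A$, the bound $\Dim X^{*}(A)\le(1/\alpha)\Dim A$. Since $X=\pr\circ X^{*}$ with $\pr$ a Lipschitz coordinate projection, also $\Dim X(A)\le\Dim X^{*}(A)\le(1/\alpha)\Dim A$. It remains to prove the uniform lower bound $\Dim X(A)\ge(1/\alpha)\Dim A$, after which all three quantities coincide.

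For the lower bound I would prove the single uniform statement: almost surely, for every bounded $E\subset\R^n$ one has $\overline{\dim}_{B}\,X(E)\ge(1/\alpha)\overline{\dim}_{B}\,E$. This suffices. Given any countable cover $X(A)=\bigcup_i F_i$, set $E_i=A\cap X^{-1}(F_i)$; then $A=\bigcup_i E_i$ and $X(E_i)\subset F_i$, so monotonicity and the uniform inequality give $\overline{\dim}_{B}\,F_i\ge\overline{\dim}_{B}\,X(E_i)\ge(1/\alpha)\overline{\dim}_{B}\,E_i$. Hence $\sup_i\overline{\dim}_{B}\,F_i\ge(1/\alpha)\sup_i\overline{\dim}_{B}\,E_i\ge(1/\alpha)\Dim A$, the last step because $\{E_i\}$ covers $A$; taking the infimum over all covers $\{F_i\}$ of $X(A)$ yields $\Dim X(A)\ge(1/\alpha)\Dim A$.

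The box-dimension inequality is where the probabilistic input and the hypothesis $n\le\alpha d$ enter. I would work with dyadic cubes of side $2^{-k}$ in the domain. By the uniform modulus of continuity the image of each such cube lies in a ball of radius $\lesssim 2^{-k\alpha}$ up to a logarithmic factor, so it suffices to bound below the number $N_{2^{-k\alpha}}(X(E))$ of $2^{-k\alpha}$-balls meeting the image by the number $M=N_{2^{-k}}(E)$ of dyadic cubes meeting $E$, up to subpolynomial factors. The obstruction is that image-cubes may pile up into a common ball, with $N_{2^{-k\alpha}}(X(E))\gtrsim M/(\text{maximal pile-up})$. Strong local nondeterminism of fractional Brownian motion controls the pile-up: for cubes centred at $t,s$ the conditional variance bound gives $\P(|X(t)-X(s)|\le 2^{-k\alpha})\lesssim(2^{-k\alpha}/|t-s|^{\alpha})^{d}$, and summing over all pairs of cubes meeting $E$ produces an expected number of colliding pairs of order $M$ when $n<\alpha d$ and of order $Mk$ in the borderline case $n=\alpha d$ — it is exactly the inequality $n\le\alpha d$ that makes the relevant series over dyadic separations converge, up to the logarithmic factor. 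A second-moment estimate together with Borel--Cantelli, applied over the countable family of all dyadic cubes and all scales $k$, then shows that almost surely, beyond some scale, $N_{2^{-k\alpha}}(X(E))\gtrsim M/k^{C}$ simultaneously for all $E$, which gives the desired box-dimension inequality.

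The hard part will be precisely this uniformity over all sets at once. The resolution — the conceptual core of every uniform dimension result of this kind, going back to Monrad--Pitt — is that although $A$ ranges over uncountably many sets, the covering-number comparison need only be established for the countable family of dyadic cubes, where Borel--Cantelli applies directly; the conclusion for an arbitrary $E$ then follows because any efficient cover is refined by dyadic cubes. Carrying out the second-moment argument, verifying that the logarithmic losses in the modulus of continuity and in the borderline collision count $n=\alpha d$ do not perturb the box-dimension exponent, and assembling the estimates through the decomposition characterization of packing dimension, are the remaining technical steps.
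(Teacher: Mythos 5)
This theorem is quoted in the paper from Xiao's 1993 article without proof, so your attempt can only be measured against the known proof technique (Kaufman's method, as adapted to fractional Brownian motion by Monrad--Pitt and Xiao). Your skeleton is exactly the right one and matches that technique: the H\"older upper bound valid simultaneously for all $A$, the reduction of the uniform packing-dimension lower bound to a uniform upper box-dimension inequality via the decomposition characterization (the argument with $E_i=A\cap X^{-1}(F_i)$ is correct), and a dyadic-cube collision count as the probabilistic core. Your first-moment computation is also correct: the expected number of cubes of side $2^{-k}$ whose image collides with that of a fixed cube is $O(1)$ when $n<\alpha d$ and $O(k)$ in the critical case $n=\alpha d$, which is where the hypothesis $n\le \alpha d$ enters.

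The genuine gap is the concentration step. What the argument needs is: almost surely, for all large $k$, \emph{every} dyadic cube $Q$ of side $2^{-k}$ in $[0,1]^n$ has pile-up count $P_k(Q)\le k^C$, where $P_k(Q)$ is the number of cubes $Q'$ whose image comes within $\approx 2^{-k\alpha}$ of the image of $Q$. There are $\asymp 2^{nk}$ cubes at scale $k$, so after the union bound one needs a per-cube tail estimate $\P\left(P_k(Q)\ge k^C\right)=o(2^{-nk})$, i.e.\ exponentially small in $k$. A second-moment (Chebyshev) bound on $P_k(Q)$, whose mean is of order $k$, can only give decay that is polynomial in the threshold; this is nowhere near $2^{-nk}$, so ``a second-moment estimate together with Borel--Cantelli'' cannot close the argument. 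Nor can you substitute a bound on the \emph{total} number of colliding pairs over all cubes: that controls collisions relative to all of $[0,1]^n$, not relative to the $M$ cubes meeting an arbitrary $E$; a single clique of $M^{1/2}$ cubes mapping into one ball is consistent with the expected global pair count yet destroys the covering bound $N\gtrsim M/k^C$. The actual proofs obtain exponential tails from $m$-point estimates: using independence of increments (Kaufman, for Brownian motion) or strong local nondeterminism (Monrad--Pitt, Xiao, for fBm), the probability that $m$ distinct cubes all collide with $Q$ is bounded by a product of $m$ single-collision factors, which controls all moments of $P_k(Q)$ and makes $\P(P_k(Q)\ge m)$ decay geometrically in $m$; only then does the union bound over $2^{nk}$ cubes and all scales succeed. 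Note, finally, that for a single pair of cubes your appeal to strong local nondeterminism is superfluous --- the bound $\P(|X(t)-X(s)|\le r)\lesssim \min\{1,(r/|t-s|^{\alpha})^{d}\}$ is just the scaling property of fBm; SLND is genuinely needed precisely at the $m$-point estimates that your outline omits.
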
 

From now on assume that $n>\alpha d$. Xiao~\cite{X2} showed that $\Dim X^{-1}(\mathbf{0})=n-\alpha d$, witnessing that none of the equations of \eqref{eq:ud2} holds uniformly.
It was a long-standing open problem whether for every fixed Borel set $A\subset \R^n$ we have
\begin{equation} \label{eq:dbl} 
\Dim X(A)=\min\left\{d, (1/\alpha) \Dim A\right\}.
\end{equation}  
The upper bound of $\Dim X(A)$ coming from \eqref{eq:dbl} follows from the 
local H\"older continuity of $X$.
Since Hausdorff dimension is less than equal to packing dimension (see Tricot~\cite{T}), Theorem~\ref{t:Kh} easily implies that \eqref{eq:dbl} holds for all Borel sets $A\subset \R^n$ satisfying $\dim A=\Dim A$. However, Talagrand and Xiao~\cite{TX} proved that \eqref{eq:dbl} does not hold in general. For the sake of technical simplicity the following theorems discuss only the case $n=1$. The next one is \cite[Theorem~4.1]{TX}. 

\begin{theorem}[Talagrand--Xiao] \label{t:TX}
Let $d\in \N^+$ and let $0<\alpha<1$ with $1>\alpha d$. Let $\{X(t):t\in \R\}$ be a $d$-dimensional fractional Brownian motion of Hurst index $\alpha$. Let $A\subset \R$ be a compact set with $\Dim A=\beta$. 
Then, almost surely, we have 
\[\Dim X(A)\geq \frac{\beta d}{\alpha d+\beta(1-\alpha d)}.\]
\end{theorem}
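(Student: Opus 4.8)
The plan is to prove the lower bound $\Dim X(A) \geq \frac{\beta d}{\alpha d + \beta(1-\alpha d)}$ by constructing, via Theorem~\ref{t:lbb}, a measure whose image dimension is large. Concretely, I would choose a compactly supported Borel probability measure $\mu$ on $A$ witnessing the packing dimension $\Dim A = \beta$ (in the sense of packing dimension profiles), and then estimate $\Dim_X \mu$ from below using Definition~\ref{d:Z}. By Theorem~\ref{t:lbb} we have $\Dim X(A) \geq \Dim_X A \geq \Dim_X \mu$ almost surely, so it suffices to bound $\Dim_X \mu$ below by $\frac{\beta d}{\alpha d + \beta(1-\alpha d)}$.

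The heart of the argument is to control the quantity $\E(\mu_X(B(X(t),r)))$ appearing in Definition~\ref{d:Z}. Writing this out, $\E(\mu_X(B(X(t),r))) = \E \int \mathbbm{1}\{|X(t)-X(s)| \leq r\} \, d\mu(s)$, and by Fubini this equals $\int \Prob(|X(t)-X(s)| \leq r)\, d\mu(s)$. Since $X$ is a $d$-dimensional fractional Brownian motion, $X(t)-X(s)$ is a centered Gaussian vector in $\R^d$ with independent coordinates each of variance $|t-s|^{2\alpha}$, so the Gaussian density gives the sharp estimate
\begin{equation*}
\Prob(|X(t)-X(s)| \leq r) \asymp \min\left\{1, \left(\frac{r}{|t-s|^{\alpha}}\right)^{d}\right\}.
\end{equation*}
The plan is then to feed this kernel into the $\liminf$ defining $\Dim_Z \mu$ and to choose $\mu$ so that, for $\mu$-a.e.\ $t$, the integral against this kernel decays at least like $r^{\gamma}$ for every $\gamma$ below the target exponent. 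This is exactly where the packing dimension profile formalism is needed: the right $\mu$ is one adapted to the $\alpha$-parabolic / anisotropic scaling implicit in the kernel $\min\{1,(r/|t-s|^{\alpha})^d\}$, and the exponent $\frac{\beta d}{\alpha d + \beta(1-\alpha d)}$ should emerge as the dimension of the $\beta$-dimensional measure on $A$ measured through this profile kernel. I would optimize by splitting the integral over dyadic annuli $2^{-(j+1)} < |t-s| \leq 2^{-j}$ and balancing the contribution where $|t-s|^{\alpha} \lesssim r$ against the rest.

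The main obstacle I anticipate is the measure-theoretic construction and the verification that $\Dim_X \mu$ reaches the claimed value for a suitably chosen $\mu$: one must produce a measure on the compact set $A$ whose local behavior simultaneously realizes the packing dimension $\beta$ and interacts optimally with the anisotropic Gaussian kernel. Packing dimension (unlike Hausdorff dimension) is not captured by a single Frostman-type measure, so the correct object is a measure achieving the packing profile value, and extracting the pointwise $\liminf$ bound for $\mu$-a.e.\ $t$ from a global integral estimate requires care — a Borel--Cantelli or maximal-function argument along dyadic scales to pass from the expectation bound to the a.e.\ $\liminf$ statement. Once the kernel estimate and the optimal $\mu$ are in hand, the exponent arithmetic (solving the balancing equation for the critical $\gamma$) is routine, but identifying and constructing that measure, and justifying that its profile dimension equals $\frac{\beta d}{\alpha d + \beta(1-\alpha d)}$, is the crux of the proof.
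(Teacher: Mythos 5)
Your proposal correctly identifies the reduction the paper's framework provides: by Theorem~\ref{t:lbb} it suffices to bound $\Dim_Z\mu$ from below for a suitable $\mu\in\iP_c(A)$, and by Lemma~\ref{l:1} (and Corollary~\ref{c:1}) this amounts to bounding $\int \P(|X(t)-X(s)|\leq r)\,\mathrm{d}\mu(s)$, with the Gaussian kernel estimate $\P(|X(t)-X(s)|\leq r)\asymp\min\{1,(r/|t-s|^{\alpha})^{d}\}$. Note that the paper does not reprove this statement (it cites \cite[Theorem~4.1]{TX}), but it does prove the graph analogue, Theorem~\ref{t:ad}, and that proof shows exactly what a complete argument needs. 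Also, your worry about a Borel--Cantelli or maximal-function step is misplaced: Definition~\ref{d:Z} is already phrased in terms of expectations, and Theorem~\ref{t:lbb} (Fatou plus Fubini) does the passage to an almost sure statement; moreover the $\liminf$ only has to vanish along some sequence of scales.

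The genuine gap is the point you yourself flag as ``the crux'': the construction of $\mu$. Worse, the specific plan you describe --- take a measure on $A$ merely witnessing $\Dim A=\beta$ and balance over dyadic annuli --- provably cannot reach the sharp exponent. If all you know is $\mu(B(t,u))\leq u^{\gamma}$ along a sequence of scales $u$, then splitting $\int\min\{1,r^{d}|t-s|^{-\alpha d}\}\,\mathrm{d}\mu(s)$ at scale $u$ gives at best $u^{\gamma}+r^{d}u^{-\alpha d}$, and optimizing $r$ against $u$ yields the exponent $\frac{\gamma d}{\alpha d+\gamma}$, which is strictly smaller than the target $\frac{\gamma d}{\alpha d+\gamma(1-\alpha d)}$. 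The improvement requires structural information about $\mu$ at \emph{all} intermediate scales, not just a mass bound along a sequence. This is supplied by Lemma~\ref{l:Eg} (Talagrand--Xiao's Lemma~4.1): for any $\gamma<\Dim A$ and $\theta>0$ one extracts a Cantor-like compact subset $E_{\gamma}\subset A$ whose level-$n$ intervals are pairwise separated by gaps $\eta=\eta_{i_1\dots i_{n-1}}$ with $\eta_{i_1\dots i_n}^{\theta}<\eta$, carrying a measure with $\mu(I_{i_1\dots i_n})\leq\eta^{\gamma}$. Evaluating the expectation at the special scales tied to the construction, the separation forces the annulus sum into the form $\eta^{\gamma}+\eta^{\gamma}\cdot\eta^{(\ast)}\sum_{j}(j\eta)^{-\alpha d}$ with only well-separated, small-mass intervals contributing, and choosing $\theta$ to kill the extra exponent produces exactly the sharp bound (this is how the paper proves the term $\beta(d+1-\alpha d)$ in Theorem~\ref{t:ad}, with $\theta=1/(d+1-\alpha d)$; for the image case one runs the same scheme with the integral over all of $E_{\gamma}$ and a different choice of $\theta$). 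Without this subset construction --- which is the actual content of Talagrand--Xiao's theorem --- your outline stalls at the weaker exponent $\frac{\beta d}{\alpha d+\beta}$.
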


For the following theorem see \cite[Proposition~3.1]{TX}.

\begin{theorem}[Talagrand--Xiao] \label{t:TX2}
For every $\beta \in (0,1)$ there exists a compact set 
$A_{\beta}\subset [0,1]$ such that $\Dim A_{\beta}=\beta$ with the following property. If $f\colon A_{\beta}\to \R^d$ is an $\alpha$-H\"older continuous function with some $d\in \N^+$ and $\alpha\in (0,1)$, then we have 
\[\Dim f(A_{\beta})\leq \frac{\beta d}{\alpha d+\beta(1-\alpha d)}.\]
\end{theorem}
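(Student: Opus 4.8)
The plan is to recast the statement through the packing dimension profiles of Falconer and Howroyd \cite{FH} and then exhibit an extremal set. For $s>0$ and $\mu\in\iP_c(\R)$ recall
\[
\Dim_s\mu=\sup\Bigl\{\gamma:\ \liminf_{r\to0+}r^{-\gamma}\!\int\min\bigl(1,(r/|x-y|)^s\bigr)\,d\mu(y)=0\ \text{for $\mu$-a.e.\ }x\Bigr\},
\]
and $\Dim_sA=\sup\{\Dim_s\mu:\mu\in\iP_c(A)\}$. The first ingredient is a deterministic profile bound for H\"older images: if $f\colon A\to\R^d$ is $\alpha$-H\"older, then $\Dim f(A)\le\frac1\alpha\Dim_{\alpha d}A$. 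This is the image analogue, for $\alpha$-H\"older maps into $\R^d$, of the projection theorem of \cite{FH} (at $\alpha=1$ the parameter is $\alpha d=d$); the parameter $\alpha d$ reflects that the image lies in $\R^d$ while $f$ contracts scales by the power $\alpha$, and it is exactly the cap in the kernel $\min(1,\cdot)$ that sharpens the naive bound $\min(\Dim A/\alpha,d)$. Setting $\Phi(s)=\frac{s\beta}{s+\beta(1-s)}$ one checks $\Phi(s)\le\min(s,\beta)$, $\Phi(1)=\beta$ and $\frac1\alpha\Phi(\alpha d)=\frac{\beta d}{\alpha d+\beta(1-\alpha d)}$. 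Hence it suffices to produce a single compact $A_\beta\subset[0,1]$, independent of $\alpha$ and $d$, with $\Dim A_\beta=\beta$ and $\Dim_sA_\beta\le\Phi(s)$ for every $s\in(0,1)$; taking $s=\alpha d\in(0,1)$ then yields the theorem for all admissible $\alpha,d$ at once.

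Second, I would build $A_\beta$ as a homogeneous Moran set: fix rapidly growing parameters so that level $k$ consists of $M_k$ intervals of length $\lambda_k$, and let $\mu$ be the uniform measure, for which $\mu(B(x,\rho))\approx M_k^{-1}$ when $\rho\approx\lambda_k$. In the logarithmic variable $w=\log(1/\rho)$ write $g(w)=\log(1/\mu(B(x,e^{-w})))$, so that $g(w)/w$ records the local dimension $\frac{\log M_k}{\log(1/\lambda_k)}$ at the corresponding scales. I would choose the parameters so that $\limsup_{w}g(w)/w=\beta$ while $g(w)/w$ dips strictly below $\beta$ over long intermediate bands of scales. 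The first condition forces $\overline{\dim}_BA_\beta=\beta$, hence $\Dim A_\beta\le\beta$; since the upper local dimension $\limsup_{\rho\to0}\frac{\log\mu(B(x,\rho))}{\log\rho}$ equals $\beta$ for $\mu$-a.e.\ $x$, the packing dimension of $\mu$ is $\beta$, whence $\Dim A_\beta\ge\beta$ and so $\Dim A_\beta=\beta$. A Laplace-type estimate gives $-\log\int\min(1,(r/|x-y|)^s)\,d\mu(y)\approx\min_{0\le w\le v}\bigl(s(v-w)+g(w)\bigr)$ with $v=\log(1/r)$, so that the profile of $\mu$ is the limsup of this quantity divided by $v$, i.e.\ a Legendre transform of $g$. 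I would calibrate the dips — concretely, the ratios $\lambda_k/\lambda_{k-1}$ against the branching $M_k/M_{k-1}$ — so that this transform equals $\Phi(s)$. The crucial point is that for $s<1$ the kernel is long-range, so the weighted sum at a fine scale still feels the coarse dips at $w<v$, pulling the exponent below $\min(s,\beta)$.

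Third, to bound the \emph{set} profile $\Dim_sA_\beta$ (a supremum over all measures, not only $\mu$) I would invoke the box-counting characterization of $\Dim_s$: there is a covering profile $\overline{\dim}^B_sE$, defined through $s$-weighted covering sums, satisfying $\Dim_sA\le\overline{\dim}^B_sA$. Because every level-$k$ cylinder of $A_\beta$ is a rescaled copy of the whole set, these weighted sums are governed by the scale sequence alone, and the same Legendre computation as above yields $\overline{\dim}^B_sA_\beta=\Phi(s)$. This delivers $\Dim_sA_\beta\le\Phi(s)$ for all $s\in(0,1)$ with the single set $A_\beta$, and combined with the first step finishes the proof.

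The main obstacle is the joint calibration in the second and third steps: choosing one scale sequence whose Legendre transform hits $\Phi(s)$ simultaneously for \emph{all} $s\in(0,1)$, and then controlling the weighted covering sums two-sidedly across consecutive levels so that no decomposition — equivalently no competing measure — beats $\Phi$. The tension is that $\limsup_w g(w)/w=\beta$ pins the density at the finest scales (needed for $\Dim A_\beta=\beta$), whereas the profile bound demands deep dips in between; reconciling the two is exactly where the rapid growth of $M_k$ and the shrinking of $\lambda_k$ must be tuned. If one prefers not to cite the H\"older--image inequality of the first step, it is proved along the same covering lines, the exponent $\alpha d$ emerging from covering the image $f(A)\subset\R^d$ at scale $r^{\alpha}$.
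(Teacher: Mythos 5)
Your strategy is genuinely different from the one this statement rests on: the paper quotes it from \cite[Proposition~3.1]{TX}, and both that proof and the paper's own proof of the graph analogue (Theorem~\ref{t:ad2}) are elementary, self-contained box-counting arguments on an explicit two-scale Cantor set, with no dimension profiles anywhere. Your plan (reduce to a profile inequality for H\"older images, then exhibit one set whose profiles are all equal to $\Phi(s)$) is a coherent alternative in principle, but as written it has two genuine gaps. The first is Step 1. The inequality $\Dim f(A)\le\frac1\alpha\Dim_{\alpha d}A$ is \emph{not} "the image analogue of the projection theorem of \cite{FH}": that paper treats orthogonal projections only, and the H\"older-image version is a substantive theorem in its own right, not something one can cite there or dispatch "along the same covering lines". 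The obstruction is concrete. With the Falconer--Howroyd definition of $\Dim_s$ via the kernels $\min\{1,r^s|t-s|^{-s}\}$ (Definition~\ref{d:FH}), bounding $\Dim \mu_f$ from above in terms of $\Dim_{\alpha d}\mu$ requires a lower bound of the form $\mu_f(B(f(t),r))\gtrsim \int \min\{1,r^{d}|s-t|^{-\alpha d}\}\,\mathrm{d}\mu(s)$, i.e.\ the pointwise domination $\mathbbm{1}_{\{|f(s)-f(t)|\le r\}}\gtrsim \min\{1,r^{d}|s-t|^{-\alpha d}\}$, which is simply false for a deterministic $\alpha$-H\"older $f$: the left side vanishes on most of the region where the right side is positive. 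It is only the expectation over fractional Brownian paths, $\P(\|X(s)-X(t)\|\le r)\gtrsim\min\{1,r^{d}|s-t|^{-\alpha d}\}$, that dominates this kernel; this is exactly why Xiao's random theorem does not linearize into a deterministic statement for free. A correct proof of your Step 1 needs the identification of $\Dim_s$ with regularized box-counting (capacity) profiles plus an energy argument for H\"older images --- machinery that exists in later literature but is neither in \cite{FH} nor supplied by your sketch.

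The second gap is that Steps 2--3 are a plan, not a proof: you never specify the parameters $M_k,\lambda_k$, never carry out the Legendre/Laplace computation, and never verify that a single sequence achieves $\Dim_s A_\beta\le\Phi(s)$ simultaneously for all $s\in(0,1)$; you yourself flag this "joint calibration" as the main obstacle, but it is precisely the mathematical content of the theorem. Moreover, the assertion that "every level-$k$ cylinder of $A_\beta$ is a rescaled copy of the whole set" is false for a Moran construction with level-dependent ratios (which yours must have, since the dips must recur and deepen along $k$), so self-similarity cannot be invoked to control the weighted covering sums or the competing measures in the supremum defining $\Dim_s$. For contrast, the argument behind the statement (and the paper's proof of Theorem~\ref{t:ad2}) fixes the explicit set with $m_k=\lfloor\eta_k^{-\beta}\rfloor$, $\delta_{k-1}=2\eta_k^{1-\beta}$, $m_1\cdots m_k\le\delta_k^{-2^{-(k+1)}}$, bounds the covering number $N(f(A_\beta),\eps^{\alpha})$ by the minimum of two competing covers (all level-$k$ intervals, versus H\"older images of the level-$(k-1)$ intervals), and optimizes over the exponent $x$ in $\eps=\delta_{k-1}^{x}$; this bounds the upper Minkowski dimension, hence the packing dimension, with no profile machinery at all. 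Your reduction would become a complete proof only after (i) proving the H\"older-image profile inequality and (ii) executing the calibration; both are real, nontrivial tasks that the proposal leaves open.
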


Since $X\colon \R\to \R^d$ is almost surely $\gamma$-H\"older continuous on $[0,1]$ for all $\gamma \in (0,\alpha)$, the above results imply that, almost surely, 
\[\Dim X(A_{\beta})= \frac{\beta d}{\alpha d+\beta(1-\alpha d)}<\min\{\beta/\alpha,d\}.\]
Thus the sets $A_{\beta}$ witness that Theorem~\ref{t:TX} is sharp and \eqref{eq:dbl} does not hold in general.
 
For graphs it is natural to ask whether for all Borel sets $A\subset \R^n$ we have
\begin{equation} \label{eq:dbl2} 
\Dim X^{*}(A)=\min\left\{(1/\alpha) \Dim A, \Dim A+d(1-\alpha)\right\}.
\end{equation}  
The upper bound of
$\Dim X^{*}(A)$ coming from \eqref{eq:dbl2} follows from \cite[Lemma~2.2]{X0}. If $A\subset \R^n$ is a Borel set with $\dim A=\Dim A$ then \eqref{eq:dbl2} holds, see the arguments after \eqref{eq:dbl}. We will prove the following.

\begin{theorem} \label{t:ad} Let $d\in \N^+$ and let $0<\alpha<1$ with $1>\alpha d$. Let $\{X(t):t\in \R\}$ be a $d$-dimensional fractional Brownian motion of Hurst index $\alpha$. Let $A\subset \R$ be an analytic set with $\Dim A=\beta$. 
Then, almost surely, we have 
\[\Dim X^{*}(A)\geq  \max\left\{ \frac{\beta d}{\alpha d+\beta(1-\alpha d)},\beta(d+1-\alpha d)\right\}.\]
\end{theorem}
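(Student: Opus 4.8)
My plan is to treat the two members of the maximum separately: the first I would reduce to the known image estimate, the second to a deterministic lower bound for the profile $\Dim_{X^{*}}$ of Definition~\ref{d:Z}.

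For the first bound I would argue pathwise. The coordinate projection $\pi\colon\R^{1+d}\to\R^{d}$, $\pi(t,x)=x$, is $1$-Lipschitz and satisfies $\pi(X^{*}(A))=X(A)$; since packing dimension cannot increase under Lipschitz maps, $\Dim X^{*}(A)\ge\Dim X(A)$ for every sample path, and it suffices to bound $\Dim X(A)$ from below. As for analytic sets packing dimension is attained on compact subsets, I would choose compact $C_k\subseteq A$ with $\Dim C_k\uparrow\beta$, apply Theorem~\ref{t:TX} to each $C_k$ on a single almost sure event, and pass to the limit using monotonicity and continuity of $\beta\mapsto\beta d/(\alpha d+\beta(1-\alpha d))$. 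This gives $\Dim X^{*}(A)\ge\beta d/(\alpha d+\beta(1-\alpha d))$ almost surely.

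For the second bound I would invoke Theorem~\ref{t:lbb}, which yields $\Dim X^{*}(A)\ge\Dim_{X^{*}}A$ almost surely; as the right-hand side is deterministic, it suffices to show $\Dim_{X^{*}}A\ge\beta(1+d(1-\alpha))$, and hence to construct, for each $\beta'<\beta$, a measure $\mu\in\iP_{c}(A)$ with $\Dim_{X^{*}}\mu\ge\beta'(1+d(1-\alpha))$. The starting point is the estimate coming from independence of the coordinates together with the Gaussian tail bound $\P(|X(s)-X(t)|\le\rho)\asymp\min\{1,(\rho/|s-t|^{\alpha})^{d}\}$:
\[
\E\big(\mu_{X^{*}}(B(X^{*}(t),r))\big)\asymp\mu\big(B(t,r^{1/\alpha})\big)+r^{d}\int_{r^{1/\alpha}<|s-t|\le r}|s-t|^{-\alpha d}\,d\mu(s).
\]
The first term is controlled by the mass at scale $r^{1/\alpha}$, and since $1/\alpha\ge 1+d(1-\alpha)$ (equivalent to the hypothesis $\alpha d\le 1$), choosing $r^{1/\alpha}$ along scales where $\mu$ realizes its packing dimension makes this term $\lesssim r^{\beta'(1+d(1-\alpha))}$ for a sequence $r_j\to 0$. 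So everything reduces to controlling the annulus integral over $[r^{1/\alpha},r]$ at those same scales.

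The hard part, which I expect to dominate the work, is exactly this annulus integral: I must produce $\mu$ on an \emph{arbitrary} analytic set of packing dimension $\beta$, and scales $r_j$, so that the integral too is $\lesssim r_j^{\beta'(1+d(1-\alpha))}$. The tension is structural — packing dimension only supplies a Frostman bound $\mu(B(t,\rho))\le\rho^{\beta'}$ along a subsequence of scales (for $\mu$-a.e.\ $t$), whereas the integrand $r^{d}|s-t|^{-\alpha d}$ couples the whole range $[r^{1/\alpha},r]$, on which $\mu$ may concentrate its mass in "frozen" ranges; it is precisely this that degrades the Hausdorff-type exponent $\beta'+d(1-\alpha)$ to the strictly smaller $\beta'(1+d(1-\alpha))$. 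Concretely I would realize the packing dimension through a compact $C\subseteq A$ that is box-dimension regular (every relatively open piece has upper box dimension $\ge\beta'$), place on $C$ a self-similar-across-scales measure whose "rich" scales are lacunary, and then optimize the position of the window $[r^{1/\alpha},r]$ against those rich scales, balancing the two terms of the estimate; the multiplicative factor $1+d(1-\alpha)$ (as opposed to an additive $+d(1-\alpha)$) should fall out of this balance and encodes the loss from packing versus Hausdorff dimension. Finally I would let $\beta'\uparrow\beta$ and combine with the first bound.
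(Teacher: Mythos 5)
Your overall architecture coincides with the paper's: the first member of the maximum is obtained exactly as the paper does it (packing dimension cannot increase under the projection $\R^{1+d}\to\R^d$, then Theorem~\ref{t:TX} applied to compact subsets of $A$ realizing $\Dim A$ via Corollary~\ref{c:h2}), and the second member is reduced, again as in the paper, to the deterministic inequality $\Dim_{X^{*}}A\geq\beta(d+1-\alpha d)$ via Theorem~\ref{t:lbb}, using the expectation formula of Lemma~\ref{l:1} and the same ball/annulus splitting of $\int \min\{1,(r/|s-t|^{\alpha})^{d}\}\,\mathrm{d}\mu(s)$. Everything you actually verify is correct, including the observation that $1/\alpha\geq 1+d(1-\alpha)$ exactly when $\alpha d\leq 1$.

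The gap is that the step you yourself flag as ``the hard part'' is never carried out, and it is precisely the mathematical core of the theorem: you must exhibit a compact $C\subset A$, a measure $\mu\in\iP_c(C)$, and scales $r_j(t)\to 0$ at which \emph{both} the ball term and the annulus integral are $\lesssim r_j^{\gamma(d+1-\alpha d)}$, and asserting that the multiplicative exponent ``should fall out'' of a balance between lacunary rich scales and the window $[r^{1/\alpha},r]$ is a heuristic, not a construction. The paper fills exactly this hole by importing Talagrand--Xiao's Lemma~4.1 (Lemma~\ref{l:Eg}): for $\gamma<\beta$ and $\theta=1/(d+1-\alpha d)$ it provides a Cantor-type compact $E_{\gamma}\subset A$ built from intervals $I_{i_1\dots i_n}$ whose $n$-th generation members inside a common parent are separated by at least $\eta_{i_1\dots i_{n-1}}$, with the quantitative lacunarity condition $\eta_{i_1\dots i_n}^{\theta}<\eta_{i_1\dots i_{n-1}}$ and a measure satisfying $\mu(I_{i_1\dots i_n})\leq\eta_{i_1\dots i_{n-1}}^{\gamma}$. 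The evaluation scale is then dictated by the structure: taking $r=\eta^{\theta}$ with $\eta=\eta_{i_1\dots i_{n-1}}$, the ball $B(t,\eta^{\theta})$ meets only one generation of intervals, pairwise separated by $\eta$, so the parent interval of $t$ contributes at most $\eta^{\gamma}$, while the annulus contributes at most $2\eta^{\theta d}\eta^{\gamma}\sum_{j\leq\eta^{\theta-1}}(j\eta)^{-\alpha d}\lesssim\eta^{\gamma+(\theta-\alpha)d+(\theta-1)(1-\alpha d)}=\eta^{\gamma}$, where $\alpha d<1$ enters through $\sum_{j\leq M}j^{-\alpha d}\lesssim M^{1-\alpha d}$ and the exponent collapses to $\gamma$ precisely because $\theta(d+1-\alpha d)=1$. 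Both terms are then $r^{\gamma(d+1-\alpha d)}$, giving $\Dim_{X^{*}}E_{\gamma}\geq\gamma(d+1-\alpha d)$ and, letting $\gamma\uparrow\beta$, the claim. Your sketch (box-dimension regular compact, lacunary scales, optimization of the window) is pointed in the right direction, but without this lemma, or an equivalent construction specifying the exponent $\theta$ and the mass bound and verifying the summation, the second lower bound remains unproved.
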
 

We will also show the following analogue of Theorem~\ref{t:TX2}. 

\begin{theorem}\label{t:ad2}
For every $\beta \in (0,1)$ there exists a compact set 
$A_{\beta}\subset [0,1]$ such that $\Dim A_{\beta}=\beta$ with the following property. If $f\colon A_{\beta}\to \R^d$ is an $\alpha$-H\"older continuous function with some $d\in \N^+$ and $\alpha\in (0,1)$, then we have
\[\Dim f^{*}(A_{\beta})\leq \max\left\{ \frac{\beta d}{\alpha d+\beta(1-\alpha d)},\beta(d+1-\alpha d)\right\}.\]
\end{theorem}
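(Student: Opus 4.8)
The plan is to reuse the extremal compact set $A_\beta$ from Theorem~\ref{t:TX2} and to replace the covering estimate for images by one adapted to graphs, in which the distinguished first (Lipschitz) coordinate is treated separately from the $d$ $\alpha$-Hölder coordinates. Throughout I will use the characterization of packing dimension as modified upper box dimension, $\Dim E=\inf\{\sup_m \overline{\dim}_B E_m: E=\bigcup_m E_m\}$, where $\overline{\dim}_B$ denotes upper box dimension. Since $f^{*}(A_\beta)=\graph(f|_{A_\beta})$ and $f^{*}(\bigcup_m A_{\beta,m})=\bigcup_m f^{*}(A_{\beta,m})$, to bound $\Dim f^{*}(A_\beta)$ from above it suffices to exhibit one countable decomposition $A_\beta=\bigcup_m A_{\beta,m}$ (possibly trivial, if $A_\beta$ already has upper box dimension $\beta$) such that, for every $\alpha$-Hölder $f$ and every $m$, $\overline{\dim}_B f^{*}(A_{\beta,m})\le \max\{s_1,s_2\}$, where $s_1=\beta d/(\alpha d+\beta(1-\alpha d))$ and $s_2=\beta(d+1-\alpha d)$. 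I take the decomposition supplied by Talagrand--Xiao's construction, so that the multi-scale covering profile of each piece $A_{\beta,m}$ is under control.

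The first step is an elementary graph-covering lemma. Writing $N_r(\cdot)$ for the least number of sets of diameter at most $r$ needed to cover a set, I will show that for bounded $E\subset\R$, an $\alpha$-Hölder map $f\colon E\to\R^d$ with constant $C$, and any $0<r\le s$,
\[ N_r(f^{*}(E))\le c_d\,N_s(E)\,\lceil s/r\rceil\,\lceil Cs^\alpha/r\rceil^{d}. \]
Indeed, covering $E$ by $N_s(E)$ sets of diameter $\le s$, over each such set the graph lies in a box of side $\le s$ in the first coordinate and $\le Cs^\alpha$ in each of the remaining $d$ coordinates, which is covered by the stated number of pieces. Optimizing the auxiliary scale $s=s(r)$ then exhibits two competing strategies. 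Taking $s=r^{1/\alpha}$ collapses both integer parts to constants and yields $N_r(f^{*}(E))\lesssim N_{r^{1/\alpha}}(E)$; this is exactly the covering used for images, and on the Talagrand--Xiao profile it reproduces the image exponent $s_1$. The opposite strategy coarsens $E$ (takes $s$ larger, which is advantageous precisely where the local covering exponent exceeds $\alpha d$); for the graph this is penalized by the extra Lipschitz factor $\lceil s/r\rceil$, and it is this penalty that forces the second exponent $s_2$.

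With the lemma in hand I will insert the covering profile of $A_{\beta,m}$ into the right-hand side and, for each target scale $r$, choose $s$ to minimize
\[ \frac{\log N_s(E)+\log_+(s/r)+d\,\log_+(s^\alpha/r)}{\log(1/r)}, \]
and finally take $\limsup_{r\to 0}$. On the dormant (plateau) scales of the construction the coarsening strategy is optimal and, after paying the Lipschitz factor, contributes at most $s_2$; on the active scales the image strategy $s=r^{1/\alpha}$ is optimal and contributes at most $s_1$. Because upper box dimension is a $\limsup$, the worst scales dominate and give $\overline{\dim}_B f^{*}(A_{\beta,m})\le\max\{s_1,s_2\}$; feeding this through the decomposition characterization yields $\Dim f^{*}(A_\beta)\le\max\{s_1,s_2\}$.

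I expect the main obstacle to be the scale optimization against the \emph{oscillating} covering profile of $A_\beta$: one must verify that along no sequence of scales does the minimized quantity exceed $\max\{s_1,s_2\}$, i.e.\ that wherever coarsening is used the Lipschitz penalty $\log_+(s/r)$ is absorbed into the passage from $s_1$ to $s_2$, and that this holds \emph{uniformly over all} $\alpha$-Hölder $f$ (the estimate must depend on $f$ only through its Hölder constant, which enters $c_d$ and the constant $C$ and so is invisible to the dimension). A secondary technical point is to check that the Talagrand--Xiao decomposition simultaneously realizes $\Dim A_\beta=\beta$ and keeps the covering profile of each piece in the regime where the optimization returns exactly $\max\{s_1,s_2\}$, rather than the larger generic value $\min\{\beta/\alpha,\beta+d(1-\alpha)\}$.
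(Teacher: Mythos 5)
Your plan is, in outline, the same as the paper's proof: the paper also uses the Talagrand--Xiao set $A_{\beta}$, normalizes $f$ by scaling so that everything depends on $f$ only through its H\"older constant, covers $\graph(f|_{A_\beta})$ by boxes that treat the Lipschitz time coordinate and the $d$ H\"older coordinates separately, and finishes with exactly the scale optimization you describe: writing $\eps=\delta_{k-1}^{x}$, it shows the covering exponent at scale $\eps^{\alpha}$ is at most $o_k(1)+\min\{g(x),h(x)\}$ and then uses $\sup_{x\ge 1}\min\{g,h\}=\min_{x\geq 1}\max\{g,h\}=\max\{s_1,s_2\}$. Also, your hedge about the decomposition is resolved in the trivial direction: condition \eqref{eq:l3} makes the interval counts $m_1\cdots m_k\leq \delta_k^{-2^{-(k+1)}}$ sub-polynomial, so the paper needs no decomposition at all and bounds $\overline{\dim}_M f^{*}(A_{\beta})$ directly.

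There is, however, one quantitative error in your narrative, and it sits exactly at the step you flag as the main obstacle. Your covering lemma $N_r(f^{*}(E))\le c_d N_s(E)\lceil s/r\rceil\lceil Cs^{\alpha}/r\rceil^{d}$ is lossy when $s>r$: it thickens the graph over a piece of diameter $s$ by the oscillation $Cs^{\alpha}$, whereas one should first cut that piece into $\lceil s/r\rceil$ intervals of length $r$ and only then thicken each by $Cr^{\alpha}$, giving $N_r(f^{*}(E))\le c_d N_s(E)\lceil s/r\rceil\lceil Cr^{\alpha-1}\rceil^{d}$; this sharper count is the paper's estimate \eqref{eq:dk3}. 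Consequently your claim that the coarsening strategy ($s$ at the construction scale $\delta_{k-1}$, paying the Lipschitz factor) ``forces the second exponent $s_2$'' is false for your lemma: at scales $\eps=\delta_{k-1}^{x}$ with $x>1/\alpha$ it yields the exponent $1-1/(\alpha x)+d(1-1/x)$ instead of the paper's $h(x)=1-1/(\alpha x)+d(1-\alpha)$, and its crossing with $g$ is $\beta(1+d)/\bigl(1+\alpha d(1-\beta)\bigr)$, which strictly exceeds $\max\{s_1,s_2\}$ whenever $s_2>s_1$ (for $\alpha=\beta=1/2$, $d=1$ it gives $4/5$ against the true value $3/4$). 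Your proposal survives only because you minimize over \emph{all} $s$: in that regime the minimizing choice is $s=r$ (no coarsening at all --- cover $A_{\beta}$ at scale $r$ and pay $\lceil Cr^{\alpha-1}\rceil^{d}$ per piece), where your lemma is tight and reproduces the paper's bound. So when you execute, either keep the unrestricted optimization or replace $\lceil Cs^{\alpha}/r\rceil^{d}$ by $\lceil C\min\{s,r\}^{\alpha}/r\rceil^{d}$ in the lemma; restricting attention to the construction scales $\delta_{k}$, $\delta_{k-1}$, as your heuristic suggests, would not prove the theorem.
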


Similarly as above, this implies that, almost surely, 
\[\Dim X^{*}(A_{\beta})= \max\left\{ \frac{\beta d}{\alpha d+\beta(1-\alpha d)},\beta(d+1-\alpha d)\right\}<\min\{\beta/\alpha, \beta+d(1-\alpha)\}.\]
Thus the sets $A_{\beta}$ witness that Theorem~\ref{t:ad} is sharp and \eqref{eq:dbl2} does not hold in general.

Xiao~\cite{X} determined the almost sure packing dimension of 
$X(A)$. Before stating his result we need some preparation.

\begin{notation} \label{n:Fd}
Let $\mu$ be a Borel probability measure on $\R^{d}$. For all 
$\beta>0$, $x\in \R^d$, and $r>0$ define 
\[F_{\beta}^{\mu}(x,r)=\int_{\R^d} \min\{1, r^{\beta} |y-x|^{-\beta}\} \, \mathrm{d} \mu(y). \]
\end{notation} 

The following definition is due to Falconer and Howroyd \cite{FH}. 

\begin{definition} \label{d:FH}
For a Borel probability measure $\mu$ on $\R^d$ and 
$\beta>0$ define the \emph{$\beta$-dimensional packing dimension profile of $\mu$} as 
\[\Dim_\beta \mu=\sup\left\{\gamma: \liminf_{r\to 0+} r^{-\gamma} F_{\beta}^{\mu}(x,r)=0 \textrm{ for $\mu$-a.e.}\ x\in \R^d \right\}.\]
For $A\subset \R^d$ we define the \emph{$\beta$-dimensional packing dimension profile of $A$} as 
\[ \Dim_\beta A=\sup\{\Dim_\beta \mu: \mu \in \iP_c(A)\}.\] 
\end{definition} 

\begin{theorem}[Xiao] \label{t:X} Let $\{X(t): t\in \R^n\}$ be a $d$-dimensional fractional Brownian motion of Hurst index $\alpha\in (0,1)$. For each analytic set $A\subset \R^n$, almost surely, 
\[ \Dim X(A)=(1/\alpha) \Dim_{\alpha d} A. \]  
\end{theorem}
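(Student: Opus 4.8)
The plan is to deduce the theorem from the Main Theorem together with an explicit identification of the intrinsic dimension $\Dim_X$ of Definition~\ref{d:Z} with the rescaled Falconer--Howroyd profile of Definition~\ref{d:FH}. First I would check that an $n$-parameter fractional Brownian motion $X$ is regular on every analytic set $A\subset \R^n$. For each coordinate the canonical pseudometric is $\rho_i(t,s)=|t-s|^{\alpha}$, and since $u\mapsto u^{\alpha}\log^{N}(1/u)$ is bounded on $(0,1)$ for every $N$, decomposing $A$ into countably many pieces of diameter less than one yields the required bound $\rho_i(t,s)\le c\log^{-N}(1/|t-s|)$. Thus $X$ is regular on $A$, and applying the Main Theorem with $f\equiv \mathbf{0}$ and $Z=X$ gives $\Dim X(A)=\Dim_X A$ almost surely. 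It then remains to prove the deterministic identity $\Dim_X A=(1/\alpha)\Dim_{\alpha d}A$.

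The heart of the matter is a two-sided small-ball estimate. For $t,s\in \R^n$ the increment $X(s)-X(t)$ is a centered Gaussian vector in $\R^d$ with independent coordinates of variance $|t-s|^{2\alpha}$, so $|X(s)-X(t)|$ has the law of $|t-s|^{\alpha}|G|$ with $G$ a standard $d$-dimensional Gaussian. Splitting according to whether $r|t-s|^{-\alpha}$ is at most or at least one, the elementary bound $\P(|G|\le u)\asymp \min\{1,u^{d}\}$, with constants depending only on $d$, gives $\P(|X(s)-X(t)|\le r)\asymp \min\{1,r^{d}|t-s|^{-\alpha d}\}$. Writing $\mu_X(B(X(t),r))=\mu(\{s:|X(s)-X(t)|\le r\})$, integrating in $s$ against $\mu$ and using Fubini yields
\[\E\bigl(\mu_X(B(X(t),r))\bigr)=\int_{\R^n}\P\bigl(|X(s)-X(t)|\le r\bigr)\,\mathrm{d}\mu(s)\asymp \int_{\R^n}\min\{1,r^{d}|t-s|^{-\alpha d}\}\,\mathrm{d}\mu(s).\]
Setting $\rho=r^{1/\alpha}$ so that $r^{d}=\rho^{\alpha d}$, the right-hand integrand becomes $\min\{1,\rho^{\alpha d}|t-s|^{-\alpha d}\}$, whence $\E(\mu_X(B(X(t),r)))\asymp F_{\alpha d}^{\mu}(t,r^{1/\alpha})$ in the sense of Notation~\ref{n:Fd}, with $\mu$ regarded as a measure on $\R^n$.

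Finally I would pass between the two dimension definitions. Since the comparison constants are uniform in $t$ and $r$, for every fixed $\gamma$ and every $t$ the condition $\liminf_{r\to 0+}r^{-\gamma}\E(\mu_X(B(X(t),r)))=0$ is equivalent, after the substitution $\rho=r^{1/\alpha}$, to $\liminf_{\rho\to 0+}\rho^{-\alpha\gamma}F_{\alpha d}^{\mu}(t,\rho)=0$; the corresponding exceptional $\mu$-null sets therefore coincide. Comparing Definitions~\ref{d:Z} and \ref{d:FH} this shows $\Dim_X \mu=(1/\alpha)\Dim_{\alpha d}\mu$ for every $\mu\in \iP_c(\R^n)$, and taking the supremum over $\mu\in \iP_c(A)$ gives $\Dim_X A=(1/\alpha)\Dim_{\alpha d}A$. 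Combined with $\Dim X(A)=\Dim_X A$ from the Main Theorem, this completes the proof. The only genuinely delicate point is the uniform two-sided Gaussian estimate and the verification that the multiplicative constants do not affect the vanishing of the two $\liminf$'s; everything else is a direct application of the machinery already developed.
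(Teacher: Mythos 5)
Your proof is correct, but it necessarily takes a different route from the paper: Theorem~\ref{t:X} appears there only as a quoted result of Xiao~\cite{X}, with no proof given --- the paper's relation to it is that the Main Theorem (via Theorem~\ref{t:DMWX}) \emph{generalizes} it. Xiao's original argument works directly with the profile kernel of Notation~\ref{n:Fd}, proving the lower bound by a first-moment density estimate for the image measure and the upper bound by a covering argument based on the modulus of continuity of $X$; no machinery of the type of Definition~\ref{d:Z} was available. What you do instead is exhibit Xiao's theorem as a corollary of the new machinery: regularity of fBm (correctly reduced, after a countable decomposition into pieces of diameter less than one, to the boundedness of $u^{\alpha}\log^{N}(1/u)$ on $(0,1)$ --- consistent with the paper's own fBm corollary of the Main Theorem), then the Main Theorem with $f\equiv\mathbf{0}$, and finally the deterministic identification $\Dim_X\mu=(1/\alpha)\Dim_{\alpha d}\mu$ obtained from the two-sided Gaussian small-ball bound $\P(|X(s)-X(t)|\le r)\asymp\min\{1,r^{d}|t-s|^{-\alpha d}\}$, Lemma~\ref{l:1}, and the substitution $\rho=r^{1/\alpha}$. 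All steps check out: the comparison constants depend only on $d$, hence are uniform in $t$, $s$ and $r$, so for each fixed $t$ the two liminf conditions hold or fail together, the exceptional $\mu$-null sets coincide, and the rescaling $\gamma\mapsto\alpha\gamma$ of admissible exponents gives $\Dim_X A=(1/\alpha)\Dim_{\alpha d}A$ after taking suprema over $\mu\in\iP_c(A)$. This identification is precisely the point the paper alludes to but never writes out when it remarks that Definition~\ref{d:Z} generalizes the packing-dimension profiles of Falconer and Howroyd~\cite{FH}; your route buys a self-contained derivation of the classical theorem from the paper's results and makes the claimed generalization explicit, whereas the paper's citation buys brevity and independence from the new framework.
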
 

Xiao~\cite{X} also claims that if $\{W(t): t\in \R^n\}$ is a $d$-dimensional Brownian sheet and $A\subset (0,\infty)^n$ is an analytic set, then $\Dim W(A)=2\Dim_{d/2} A$ almost surely. Khoshnevisan, Schilling, and Xiao \cite{KSX} 
determined $\Dim X(A)$ for every L\'evy process
$\{X(t): t\in [0,\infty)\}$ in $\R^d$. Zhang~\cite{Z} generalized this for all additive L\'evy processes $\{X(t): t\in [0,\infty)^n\}$ in $\R^d$. Let $\{B(t): t\in [0,\infty)\}$ be a standard $d$-dimensional Brownian motion, then $\{B^{*}(t): t\in [0,\infty)\}$ is a L\'evy process in $\R^d$, thus \cite{KSX} determines the almost sure value of $\Dim B^*(A)$. The author could not find an analogue of Theorem~\ref{t:X} for graphs if $\alpha\neq 1/2$ or $n>1$. Du, Miao, Wu, and Xiao~\cite{DMWX} generalized Theorem~\ref{t:X} (assuming a slightly stronger version of regularity) as follows. 
\begin{theorem}[Du--Miao--Wu--Xiao] \label{t:DMWX}
Let $X=\{(X_1(t),\dots,X_d(t)): t\in \R^n\}$ be a Gaussian random field in $\R^d$ with almost surely continuous sample paths such that $X_1,\dots,X_d$ are independent copies of a centered Gaussian random field $X_0$. Let $A\subset \R^n$ be an analytic set on which $X$ is regular. Then, almost surely,
 \begin{equation*} \Dim X(A)=\Dim_X A.
 \end{equation*} 		
\end{theorem}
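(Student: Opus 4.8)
The plan is to read this as the special case $Z=X$ of the Main Theorem (so $f\equiv\mathbf{0}$ and we treat images) and to establish the two inequalities separately. The lower bound is immediate: assuming every sample path of $X$ is continuous makes $X$ jointly measurable, so Theorem~\ref{t:lbb} applied to $Z=X$ gives $\Dim X(A)\ge\Dim_X A$ almost surely, with no further work.

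For the reverse inequality I would first make the profile $\Dim_X\mu$ explicit. For deterministic $\mu\in\iP_c(\R^n)$ one has $\E(\mu_X(B(X(t),r)))=\int\Prob(|X(s)-X(t)|\le r)\,d\mu(s)$, and since $X_1,\dots,X_d$ are independent copies of $X_0$ the increment $X(s)-X(t)$ is a centered Gaussian vector with covariance $\rho_0(s,t)^2$ times the identity. Hence $\Prob(|X(s)-X(t)|\le r)\asymp\min\{1,(r/\rho_0(s,t))^d\}$ up to constants and a harmless logarithmic factor, so that $\E(\mu_X(B(X(t),r)))$ is comparable to the deterministic $\rho_0$-potential $P_\mu(t,r)=\int\min\{1,(r/\rho_0(s,t))^d\}\,d\mu(s)$. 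Consequently $\Dim_X\mu=\sup\{\gamma:\liminf_{r\to0+}r^{-\gamma}P_\mu(t,r)=0\text{ for }\mu\text{-a.e.\ }t\}$, a purely metric quantity; when $\rho_0(s,t)=|s-t|^\alpha$ the substitution $r=\rho^\alpha$ recovers $(1/\alpha)\Dim_{\alpha d}\mu$ and hence Theorem~\ref{t:X}.

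The upper bound is the heart of the argument. Using the Falconer--Howroyd identity $\Dim E=\Dim_d E=\sup\{\Dim_d\nu:\nu\in\iP_c(E)\}$ for analytic $E\subseteq\R^d$, it suffices to bound $\Dim_d\nu$ for an arbitrary $\nu\in\iP_c(X(A))$. I would lift $\nu$ through a universally measurable right inverse of $X$ (Jankov--von Neumann selection, plus inner regularity for compact support) to obtain $\mu\in\iP_c(A)$ with $\nu=\mu_X$. The decisive pathwise ingredient is a uniform modulus of continuity $|X(s)-X(t)|\le h(\rho_0(s,t))$ valid almost surely on each regular piece, where $h(u)\le Cu(\log(1/u))^{1/2}$ is obtained from the Dudley entropy bound together with Gaussian concentration. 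Substituting this into $F_d^\nu(X(t),r)=\int\min\{1,r^d|X(s)-X(t)|^{-d}\}\,d\mu(s)$ yields $F_d^\nu(X(t),r)\gtrsim P_\mu(t,r')$ for a scale $r'$ that differs from $r$ only by a slowly varying factor; as such factors do not change the critical exponent in the defining $\liminf$, this gives $\Dim_d\nu\le\Dim_X\mu\le\Dim_X A$. Taking the supremum over $\nu$ yields $\Dim X(A)\le\Dim_X A$.

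The hard part will be this last step, and specifically the verification that the logarithmic gap between the sample-path modulus and the canonical metric $\rho_0$ is invisible to packing dimension. This is exactly where the regularity hypothesis enters: the decomposition $A=\bigcup_k A_k$ with $\rho_0(s,t)\le c\log^{-N}(1/|s-t|)$ for every $N$ forces every modulus correction to be sub-polynomial, so that $P_\mu(t,r')$ and $P_\mu(t,r)$ share the same critical exponent, and countable stability of both $\Dim$ and $\Dim_X$ reduces everything to a single piece $A_k$. The remaining difficulty is the bookkeeping of these slowly varying corrections uniformly in $t$; establishing the uniform modulus and controlling those corrections is where I expect the real effort to lie.
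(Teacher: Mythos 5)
Your overall architecture is essentially the paper's proof of its Main Theorem specialized to $f\equiv 0$ and $Z=X$: the lower bound via Theorem~\ref{t:lbb}, and the upper bound by fixing an almost sure sample path, lifting measures from $X(A)$ back to $A$ (your Jankov--von Neumann selection step is exactly the content of the paper's Lemma~\ref{l:h}, quoted from Shieh--Xiao/Lubin), and then comparing a deterministic kernel evaluated along the path with the $\rho_0$-potential $P_\mu(t,r)$ that identifies $\Dim_X\mu$. One genuine difference: you work with the classical Falconer--Howroyd kernel $F_d^\nu$ and profile $\Dim_d\nu=\Dim\nu$ (Theorem~\ref{t:FH}), whereas the paper proves and uses the stronger Theorem~\ref{t:Gd} with the product kernel $I_d$. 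For the present statement your choice is legitimate and simpler, precisely because the $X_i$ are i.i.d.\ copies and only images are involved; the paper's $G_d$ machinery is what allows non-identical coordinates and graphs in the Main Theorem.

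There is, however, a concrete gap: the claimed pathwise modulus $|X(s)-X(t)|\le C\,\rho_0(s,t)\bigl(\log(1/\rho_0(s,t))\bigr)^{1/2}$ is not available under the regularity hypothesis, and is in fact false in this generality. Regularity only yields the entropy bound $N_{\rho_0}(A_k,r)\le\exp(u r^{-1/N})$ for each $N$, and Dudley's bound then gives $|X(s)-X(t)|\le C\rho_0(s,t)^{1-1/(2N)}$ (this is the paper's Lemma~\ref{l:Crho}); by Sudakov minoration this power loss is essentially unavoidable, since a process whose $\rho_0$-entropy genuinely grows like $\exp(r^{-1/N})$ oscillates by at least about $r\cdot r^{-1/(2N)}$ on $\rho_0$-balls of radius $r$, which dwarfs $r\sqrt{\log(1/r)}$. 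Consequently your downstream claim that the rescaling $r\mapsto r'$ is slowly varying and ``invisible'' to the critical exponent also fails as stated: with the true modulus $u^{\beta}$, $\beta=1-1/(2N)$, the comparison reads $F_d^{\nu}(X(t),r^{\beta})\gtrsim P_\mu(t,r)$, and the substitution multiplies the critical exponent by $\beta$, giving only $\beta\,\Dim_d\nu\le\Dim_X\mu$ rather than $\Dim_d\nu\le\Dim_X\mu$. The repair is exactly the paper's device in Lemma~\ref{l:gf} and Theorem~\ref{t:ub}: since $N$ (hence $\beta<1$) is arbitrary, prove the inequality with the factor $\beta$ for every $\beta<1$ and let $\beta\to 1$; countable stability over the pieces $A_k$ then finishes the argument as you describe. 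With that correction your proof closes and coincides in substance with the paper's.
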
 
Note that $\Dim_X A$ was given in \cite{DMWX} as a direct generalization of the packing dimension profiles, which is not enough for us to settle the Main Theorem. For further references see also \cite{DMWX}.

Loosely speaking, Charmoy, Peres, and Sousi~\cite[Proposition~2.1]{CPS} calculated the almost sure upper Minkowski dimension of $(X+f)(A)$, where $\{X(t): t\in [0,1]\}$ is a L\'evy process in $\R^d$, $f\colon [0,1]\to \R^d$ is a c\`adl\`ag function, and $A\subset [0,1]$ is an arbitrary fixed set. In particular, \cite[Theorems~1.2,~1.3]{CPS} and Baire's category theorem imply the following.

\begin{theorem}[Charmoy--Peres--Sousi] \label{t:CPS} Suppose that $\{B(t): t\in [0,1]\}$ is a standard $d$-dimensional Brownian motion, $f\colon [0,1]\to \R^d$ is a c\`adl\`ag function, and $A\subset [0,1]$ is a closed set. Then there exist constants $c_1,c_2$ such that, almost surely, 
\[\Dim \, (B+f)(A)=c_1 \quad \textrm{and} \quad \Dim \, (B+f)^{*}(A)=c_2.\]
\end{theorem}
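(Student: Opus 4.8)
The plan is to combine the almost sure constancy of the \emph{box} dimension of $(B+f)(A)$ for fixed compact sets, supplied by \cite{CPS}, with a Baire category argument that upgrades this to the almost sure constancy of the \emph{packing} dimension of the whole image and graph. Throughout I would use the standard identity $\Dim E=\inf\{\sup_i \overline{\dim}_B E_i: E\subseteq\bigcup_i E_i\}$ together with the closure invariance of the upper Minkowski dimension; since $B+f$ is only c\`adl\`ag I would always pass to closures of images, which changes neither box nor packing dimension. Because $A$ is closed, hence a compact metric space with a countable basis $\{V_j\}_{j\in\N}$ of relatively open sets, the deterministic candidates $c_1,c_2$ should be manufactured from the box dimensions of $Z=B+f$ (respectively $Z=(B+f)^*$) on a suitable countable family of compact subsets of $A$.

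First I would fix a countable family $\iC$ of compact subsets of $A$, namely the closures of all members of the countable Boolean algebra generated by $\{V_j\}$. By \cite[Proposition~2.1]{CPS} each $\overline{\dim}_B Z(C)$, $C\in\iC$, equals a deterministic constant $\gamma_C$ almost surely; as $\iC$ is countable there is a single probability-one event on which $\overline{\dim}_B Z(C)=\gamma_C$ holds simultaneously for every $C\in\iC$, and on this event the whole argument becomes deterministic. I would then set $c_1=\inf\{\sup_i\gamma_{C_i}: C_i\in\iC,\ A=\bigcup_i C_i\}$, the infimum over countable covers of $A$ by members of $\iC$, and prove $\Dim Z(A)=c_1$ almost surely; the constant $c_2$ for the graph is defined identically from $Z=(B+f)^*$ using \cite[Theorems~1.2,~1.3]{CPS}.

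The bound $\Dim Z(A)\le c_1$ is immediate: any cover $A=\bigcup_i C_i$ with $C_i\in\iC$ pushes forward to $Z(A)\subseteq\bigcup_i Z(C_i)$, whence $\Dim Z(A)\le\sup_i\gamma_{C_i}$, and one infimizes over covers. For the reverse inequality I would run a transfinite exhaustion in the spirit of the Cantor--Bendixson derivative: starting from an arbitrary closed decomposition $A=\bigcup_i D_i$ with $\sup_i\overline{\dim}_B Z(D_i)=t$ close to $\Dim Z(A)$, Baire's category theorem forces some $D_{i_0}$ to have nonempty relative interior in the current residual set $F$, so it contains $\overline{V_j\cap F}\in\iC$ for a basis element $V_j$; this piece satisfies $\gamma_{\overline{V_j\cap F}}\le t$, one adjoins it to a cover and replaces $F$ by $F\setminus V_j$, iterating until $A$ is exhausted. \emph{The main obstacle is exactly this exhaustion.} Since the upper Minkowski dimension is only finitely, not countably, stable, the box dimension of $Z$ on each residual set is genuinely required and is not recoverable from the $\gamma_{\overline{V_j}}$ of the basis elements alone; one must therefore verify that the residual sets produced at successive (including limit) stages remain inside the countable deterministic family $\iC$ and that the process terminates at a countable ordinal, so that CPS applies to every set it touches on our probability-one event. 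Controlling this interaction between the evolving residual sets and the failure of countable stability of $\overline{\dim}_B$ is the delicate step; once it is settled, the graph case is identical, with \cite[Theorems~1.2,~1.3]{CPS} and $(B+f)^*$ replacing \cite[Proposition~2.1]{CPS} and $B+f$.
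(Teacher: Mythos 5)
Your overall strategy---almost sure constancy of box dimensions supplied by CPS on a countable family of deterministic compact sets, upgraded to packing dimension via Baire category---is exactly the derivation the paper intends (the paper gives no proof of this theorem beyond the remark that it follows from \cite[Theorems~1.2,~1.3]{CPS} and Baire's theorem). However, your execution has a genuine gap, and it sits precisely at the step you flag. First, the constant itself is wrong: the infimum over covers of $A$ by members of a \emph{pre-fixed} countable algebra $\iC$ need not equal $\Dim Z(A)$. Take $f\equiv 0$ and $A=\{0\}\cup\{1/\log n:n\ge 2\}$. Then $B(A)$ is countable, so $\Dim B(A)=0$ almost surely. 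But every member of $\iC$ whose closure contains $0$ is the closure of a finite Boolean combination of basic intervals, hence contains a full tail $\{0\}\cup\{1/\log n:n\ge N\}$; each such tail has upper Minkowski dimension $1$, so its Brownian image has positive almost sure box dimension (e.g.\ by the formula of \cite[Theorem~1.2]{CPS} with $f\equiv 0$, or by the classical dimension-profile results for Brownian images). Consequently every cover of $A$ by members of $\iC$ contains a set $C$ with $\gamma_C\ge\gamma_0>0$, so your $c_1\ge\gamma_0>0=\Dim Z(A)$ almost surely. Thus the equality you set out to prove is false for your choice of $c_1$: covers drawn from a fixed countable family genuinely cannot compute packing dimension. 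This is the failure of countable stability of $\overline{\dim}_M$ that you mention, but it breaks the \emph{definition} of the constant, not merely the lower-bound argument.

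Second, the ``delicate step'' you defer---that the residual sets of the exhaustion remain inside $\iC$---is not something one can verify; it is false as stated. After infinitely many removals, and at every limit ordinal, the residual set has the form $A\setminus\bigcup_{j\in S}V_j$ with $S$ infinite, a countable intersection rather than a finite Boolean combination, so it leaves $\iC$. The repair is to abandon the pre-fixed family and build it \emph{by transfinite recursion interleaved with applications of CPS}: fix $s\in\QQ$; given a deterministic compact residual set $F_\alpha$, apply CPS to the deterministic compacta $F_\alpha\cap\overline{V_j}$ and delete those $V_j$ whose almost sure constant is below $s$; then $F_{\alpha+1}$ is again deterministic, so CPS applies at the next stage. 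Second countability forces stabilization at a countable ordinal, so over all rational $s$ only countably many invocations of CPS occur and a single probability-one event suffices. On that event, if the kernel $K_s$ is empty the exhaustion itself yields a countable cover witnessing $\Dim Z(A)\le s$, while if $K_s\ne\emptyset$ Baire's theorem applied to covers of $Z(K_s)$, pulled back to $K_s$, yields $\Dim Z(A)\ge s$; the correct constant is $c_1=\sup\{s:K_s\ne\emptyset\}$. Two further corrections: taking closures preserves box dimension but \emph{not} packing dimension (consider $\QQ\cap[0,1]$), so that reduction may only be invoked for $\overline{\dim}_M$; and the Baire pullback needs preimages of closed sets to be closed, which fails for a c\`adl\`ag $Z$---you must first discard the countable set of discontinuities of $f$, whose image is countable and hence of packing dimension zero, and note that the remaining part of $A$ is still Polish, so Baire's theorem still applies there.
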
 

\subsection{Overview}
Main Theorem is a far-reaching generalization of Theorems~\ref{t:DMWX} and~\ref{t:CPS}. Compared to Theorem~\ref{t:DMWX} the new features are the following.
\begin{enumerate}[(1)]
\item \label{o:1} We do not need to assume that the fields $X_1,\dots,X_d$ are identical copies. 
\item \label{o:2}We can cover the case of graphs as well. 
\item \label{o:3} We can add an arbitrary Borel drift to $X$. 
\end{enumerate} 
Falconer and Howroyd proved a new formula for the packing dimension of measures in Euclidean spaces, see Theorem~\ref{t:FH}. The novelty which delivers \eqref{o:1} and \eqref{o:2} is Theorem~\ref{t:Gd}, which generalizes Theorem~\ref{t:FH} in two different directions. 
On the one hand, Theorem~\ref{t:Gd} uses the larger kernel $I_d(x)=\prod_{i=1}^{d} \min\{1,|x_i|^{-1}\}$ instead of $\min\{1,|x|^{-d}\}$ for $x\in \R^d$. This allows us to deal with the coordinate fields $X_i$ separately, leading to \eqref{o:1}. 
On the other hand, Theorem~\ref{t:Gd} naturally handles $\R^{n+d}$ as the product space $\R^n\times \R^d$. This makes it easier to work with graphs of functions from $\R^n$ to $\R^d$, allowing us to fulfill \eqref{o:2}.
In order to achieve \eqref{o:3} we make use of Definition~\ref{d:Z} and carry out the analysis of Lemma~\ref{l:gf}. Charmoy, Peres, and Sousi~\cite{CPS} emphasized that ``the presence of the drift $f$ implies that we cannot use techniques relying on self-similarity of the paths''. 

\subsection{Organization}
In Section~\ref{s:pr} we recall and introduce some notation and definitions, we devote a separate subsection to some properties of the packing dimension. In Section~\ref{s:main} we prove the Main Theorem. In Subsection~\ref{ss:lb} we prove Theorem~\ref{t:lbb} and verify that Definition~\ref{d:Z} makes sense and is independent of the norm on $\R^m$. In Subsection~\ref{ss:ub} we show that the upper bound for $\Dim Z(A)$ holds in the Main Theorem. The equivalent definitions of Corollary~\ref{c:1} exhibit the connection between Definition~\ref{d:Z} and the packing dimension profiles. Although Theorem~\ref{t:Gd} is used for the proof of the Main Theorem, we postpone its proof to Section~\ref{s:tech} because of its technical nature. In Section~\ref{s:tech} we use a number of methods developed in \cite{FM,FH}. Section~\ref{s:graph} is dedicated to the proofs of Theorems~\ref{t:ad} and \ref{t:ad2}, which are based on a paper of Talagrand and Xiao~\cite{TX}.

\section{Preliminaries} \label{s:pr} 

Let $\mathbf{0}$ denote the origin of $\R^d$. Unless stated otherwise, we endow $\R^d$ with the Euclidean norm $|\cdot|$ and $||\cdot||$ will denote the maximum norm  defined by $||x||=\max_{1\leq i\leq d} |x_i|$. Recall that the closed ball of radius $r$ around $x$ in the Euclidean norm is denoted by $B(x,r)$, and let $D(x,r)$ denote the corresponding ball in the maximum norm. We use the convention $\R^{0}=\{0\}$. Let $\iP(\R^d)$ be the set of Borel probability measures on $\R^d$. The \emph{support} of $\mu\in \iP(\R^d)$ is denoted by $\supp(\mu)$, which is the minimal closed set $F$ for which $\mu(\R^d\setminus F)=0$. Recall that for $A\subset \R^d$ we have
\[ \iP_c(A)=\{\mu\in \iP(\R^d): \supp(\mu)\subset A \textrm{ is compact}\}.\] 
A set $A\subset \R^d$ is called \emph{analytic} or \emph{Souslin} if it is a continuous image of a complete, separable metric space. By \cite[Proposition~14.2]{Ke} every Borel set in $\R^d$ is analytic. Let $N=N(0,1)$ denote a standard normal random variable with density function $\varphi(x)=(2\pi)^{-1/2} \exp(-x^2/2)$. Let $\mathbbm{1}_{A}$ be the characteristic function of the set $A$. A map $f\colon \R^n\to \R^d$ is called \emph{locally $\gamma$-H\"older continuous} if for each compact set 
$K\subset \R^n$ there is a finite constant $c=c(K)$ such that $|f(x)-f(y)|\leq c|x-y|^{\gamma}$ for all $x,y\in K$. We use the notation $a\lesssim_{d_1,\dots,d_k} b$ if there is a finite constant $c=c(d_1,\dots,d_k)$ such that $a\leq cb$. We write $a\lesssim b$ if $a\leq cb$ with an absolute $c$.

\begin{definition}
Let $0<\alpha<1$. Then $X=\{(X_1(t),\dots,X_d(t)): t\in \R^n\}$ is called a 
\emph{$d$-dimensional fractional Brownian motion of Hurst index $\alpha$} if $X_1,\dots,X_d$ are i.i.d.\ copies of a centered, real-valued Gaussian random field $\{X_0(t): t\in \R^n\}$ such that $X_0(\textbf{0})=0$, almost every sample path of $X_0$ is continuous, and for all $t,s\in \R^n$ we have 
\[
\Cov (X_0(t),X_0(s))=(1/2)(|t|^{2\alpha}+|s|^{2\alpha}-|t-s|^{2\alpha}).
\]
\end{definition}
For each $t,s\in \R^n$ the variable $X_0(t)-X_0(s)$ has distribution $|t-s|^{\alpha}N$, and $X$ is almost surely locally $\gamma$-H\"older continuous for
all $\gamma<\alpha$. For more information on
fractional Brownian motion see \cite[Chapter~8]{Ad} and \cite[Chapter~18]{Kh}.

\subsection{Packing dimension and some of its properties} \label{ss:pack}
Let $(E,\rho)$ be a pseudometric space. If $E$ is totally bounded then for all $r>0$ let $N_{\rho}(E,r)=N(E, r)$ be the smallest number of closed balls of radius $r$ whose union covers $E$. The \emph{upper Minkowski dimension} of $E$ is defined as
\[
\overline{\dim}_M E=\limsup_{r\to 0+} \frac{\log N(E,r)}{\log (1/r)}.
\]
Let $\overline{\dim}_M \emptyset=0$ and let $\overline{\dim}_M E=\infty$ if $E$ is not totally bounded. The \emph{packing dimension} of $E$ is defined as
\[\Dim E=\inf\left\{\sup_{i}\overline{\dim}_M E_i: E=\bigcup_{i=1}^{\infty} E_i\right\}.\] 
The \emph{packing dimension} of a Borel probability measure $\mu$ on $\R^d$ is defined as
\begin{equation*} \Dim \mu=\inf\{ \Dim E: E\subset \R^d \textrm{ is Borel and } \mu(E)>0\}.
\end{equation*}

The following theorem is due to Hu and Taylor~\cite[Lemma~4.1]{HT}.

\begin{theorem}[Hu--Taylor] \label{t:pmu}
Let $\mu$ be a Borel probability measure on $\R^d$. Then
\[\Dim \mu=\sup\left\{\gamma: \liminf_{r\to 0+} r^{-\gamma} \mu(B(x,r))=0 \textrm{ for $\mu$-a.e.\ $x\in \R^d$}\right\}.\]	
\end{theorem}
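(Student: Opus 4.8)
The plan is to prove the two inequalities $\Dim \mu \geq \gamma_0$ and $\Dim \mu \leq \gamma_0$ separately, where $\gamma_0$ denotes the supremum on the right-hand side. Throughout I would work directly with the defining condition $\liminf_{r\to 0+} r^{-\gamma}\mu(B(x,r))=0$, exploiting its monotonicity in $\gamma$: it holds for $\mu$-a.e.\ $x$ whenever $\gamma<\gamma_0$, and fails on a set of positive $\mu$-measure whenever $\gamma>\gamma_0$.

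For the upper bound $\Dim \mu \leq \gamma_0$, fix $\gamma>\gamma_0$. Then $G_0=\{x:\liminf_{r\to0+}r^{-\gamma}\mu(B(x,r))>0\}$ has positive $\mu$-measure, and writing it as the countable union of the sets $G_{p,q}=\{x:\mu(B(x,r))\geq (1/p)r^\gamma \text{ for all } r<1/q\}$, some $G:=G_{p_0,q_0}$ has $\mu(G)>0$; by inner regularity I may take $G$ compact. On $G$ the uniform lower density bound $\mu(B(x,r))\geq(1/p_0)r^\gamma$ holds for $x\in G$ and $r<1/q_0$. A maximal $r$-separated subset of $G$ yields disjoint balls of radius $r/2$ centred in $G$, whose $\mu$-masses are each $\gtrsim r^\gamma$ and sum to at most $1$; hence the packing number, and so the covering number $N(G,r)$, is $\lesssim r^{-\gamma}$. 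Thus $\overline{\dim}_M G\leq\gamma$, and since $\mu(G)>0$ this gives $\Dim \mu\leq \Dim G\leq \overline{\dim}_M G\leq\gamma$. Letting $\gamma\downarrow\gamma_0$ finishes this direction.

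For the lower bound $\Dim \mu\geq\gamma_0$, fix $\gamma<\gamma_0$ and suppose towards a contradiction that some Borel set $E$ with $\mu(E)>0$ has $\Dim E<\gamma$. Then $E$ admits a countable decomposition into pieces of upper Minkowski dimension $<\gamma$; by subadditivity the closure $F$ of one piece satisfies $\mu(F)>0$ and $s:=\overline{\dim}_M F<\gamma$, and intersecting with a large ball I may take $F$ compact. Fix $s'\in(s,\gamma)$ and $\delta>0$. Since $\liminf_{r\to0+}r^{-\gamma}\mu(B(x,r))=0$ for $\mu$-a.e.\ $x$, such $x$ admit arbitrarily small radii $r$ with $\mu(B(x,r))\leq\delta r^\gamma$; assigning each such radius to the dyadic scale $[2^{-m-1},2^{-m})$ containing it, I set $F_m=\{x\in F:\mu(B(x,r))\leq\delta r^\gamma \text{ for some } r\in[2^{-m-1},2^{-m})\}$ and observe that $\mu$-a.e.\ point of $F$ lies in $F_m$ for infinitely many $m$, so $\mu(F)\leq\sum_{m\geq M}\mu(F_m)$ for every $M$.

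The crux is to estimate $\mu(F_m)$. Covering $F_m$ by the balls $B(x,r_x)$ with $r_x\approx 2^{-m}$ and $\mu(B(x,r_x))\leq\delta 2^{-m\gamma}$, the Besicovitch covering theorem extracts a subfamily that still covers $F_m$ but with overlap bounded by a dimensional constant. Counting the selected centres inside each ball of an optimal $2^{-m}$-cover of $F$—there are $\lesssim 2^{ms'}$ such cover balls for large $m$ because $\overline{\dim}_M F=s<s'$, while the bounded overlap limits the number of selected centres per cover ball to a constant—bounds the number of selected balls by $\lesssim 2^{ms'}$. Hence $\mu(F_m)\lesssim\delta\,2^{-m\gamma}2^{ms'}=\delta\,2^{-m(\gamma-s')}$, and summing the convergent geometric series gives $\mu(F)\lesssim\delta\,2^{-M(\gamma-s')}\to 0$ as $M\to\infty$, contradicting $\mu(F)>0$. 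This yields $\Dim E\geq\gamma$ for all admissible $E$, hence $\Dim \mu\geq\gamma_0$. The main obstacle of the whole argument is precisely this passage from the scale- and $x$-dependent density information to a uniform covering estimate at a single dyadic scale: the dyadic localisation, Besicovitch's bounded-overlap selection, and the Minkowski count together are what render the two scales compatible. (Measurability of the auxiliary sets $G_{p,q}$ and $F_m$ is routine from the monotonicity of $r\mapsto\mu(B(x,r))$, and can in any case be bypassed by inner regularity.)
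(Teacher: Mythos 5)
Your proposal is correct, but there is nothing in the paper to compare it against: the paper does not prove this statement, it simply quotes it from Hu and Taylor \cite[Lemma~4.1]{HT}. What you have written is essentially the standard density--covering argument for this fact, and both directions are sound. The upper bound (uniformizing the positive liminf into the sets $G_{p,q}$, then bounding $N(G,r)\lesssim r^{-\gamma}$ via disjoint balls of radius $r/2$ around a maximal $r$-separated set) is complete as written. In the lower bound, two steps deserve tightening. First, your claim that ``the bounded overlap limits the number of selected centres per cover ball to a constant'' is true but not for the reason stated: bounded overlap alone does not control the number of centres (many tiny balls could share a cover ball); you also need the comparability of radii that you set up. Concretely, the selected balls with centres in a cover ball $B(y,2^{-m})$ have radii in $[2^{-m-1},2^{-m})$, hence all lie in $B(y,2^{-m+1})$, and integrating the overlap bound gives $(\text{number of such balls})\cdot \omega_d 2^{-(m+1)d}\leq C_d\,\omega_d\,2^{(-m+1)d}$, i.e.\ a dimensional constant. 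Second, Besicovitch is avoidable entirely: take a maximal $2^{-m-1}$-separated subset $\{x_i\}$ of $F_m$; the balls $B(x_i,2^{-m-1})$ cover $F_m$, each is contained in the mass-controlled ball $B(x_i,r_{x_i})$ since $r_{x_i}\geq 2^{-m-1}$, and the number of centres is at most $N(F,2^{-m-2})\lesssim 2^{ms'}$, which gives $\mu(F_m)\lesssim \delta 2^{-m(\gamma-s')}$ more elementarily. Finally, on measurability: $F_m$ involves an existential quantifier over real $r$, so as written it is analytic rather than obviously Borel, and ``monotonicity of $r\mapsto\mu(B(x,r))$'' does not by itself settle this; however, your remark that the issue can be bypassed is right, either because the entire estimate runs with outer measure (covering bounds and countable subadditivity require no measurability), or by using strict inequality in the definition of $F_m$, which makes $F_m$ relatively open in $F$ by upper semicontinuity of $(x,r)\mapsto\mu(B(x,r))$.
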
 

The following result is \cite[Lemma~4.2]{SX}, which is based on \cite{L}.

\begin{lemma} \label{l:h} 
Let $h\colon \R^n\to \R^d$ be a Borel map and let $A\subset \R^n$ be an analytic set. Then 
\[\Dim h(A)=\sup\{ \Dim \mu_h: \mu\in \iP_c(A)\}.\]	
\end{lemma}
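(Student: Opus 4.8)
The plan is to establish the two inequalities separately; the nontrivial one rests on the measure-theoretic characterization of packing dimension for analytic sets together with a measurable lifting.

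For the inequality $\sup\{\Dim\mu_h:\mu\in\iP_c(A)\}\le \Dim h(A)$, fix $\mu\in\iP_c(A)$. Since $\supp\mu\subset A$ is compact and $h$ is Borel, the set $h(A)$ is analytic, hence universally measurable, and $\mu_h(h(A))=\mu(h^{-1}(h(A)))=1$ because $h^{-1}(h(A))\supset A\supset\supp\mu$; thus $\mu_h$ is carried by $h(A)$. I would then use the general principle that a probability measure cannot have larger packing dimension than any set carrying it: given any countable cover $h(A)=\bigcup_i E_i$, subadditivity of the outer measure $\mu_h^{*}$ forces $\mu_h^{*}(E_i)>0$ for some $i$, and since $\overline{\dim}_M$ is unchanged by passing to the closure, $\Dim\mu_h\le\Dim\overline{E_i}\le\overline{\dim}_M\overline{E_i}=\overline{\dim}_M E_i\le\sup_j\overline{\dim}_M E_j$. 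Taking the infimum over covers yields $\Dim\mu_h\le\Dim h(A)$, and then the supremum over $\mu$.

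The reverse inequality is the crux. Fix $\gamma<\Dim h(A)$. Because $h(A)$ is analytic, I would invoke the Joyce--Preiss-type characterization of packing dimension (the content of \cite{L}, cf.\ \cite{SX}), namely that $\Dim E=\sup\{\Dim\nu:\nu\in\iP_c(E)\}$ for every analytic $E$, to produce $\nu\in\iP_c(h(A))$ with $\Dim\nu>\gamma$. The remaining task is to lift $\nu$ back across $h$. Viewing $h|_A\colon A\to h(A)$ as a Borel surjection between analytic sets, the Jankov--von Neumann uniformization theorem supplies a universally measurable right inverse $g\colon h(A)\to A$ with $h\circ g=\id$. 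Setting $\mu_0=g_{*}\nu$ gives a Borel probability measure carried by $A$ and satisfying $(\mu_0)_h=(h\circ g)_{*}\nu=\nu$, so $\Dim(\mu_0)_h=\Dim\nu>\gamma$.

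It remains to replace $\mu_0$ by a genuine element of $\iP_c(A)$ without losing dimension. By inner regularity there is a compact $K\subset A$ with $\mu_0(K)>0$; put $\mu=\mu_0|_K/\mu_0(K)\in\iP_c(A)$. Since $\mu_h(E)>0$ implies $\mu_0(h^{-1}E)>0$, we have $\{E:\mu_h(E)>0\}\subset\{E:(\mu_0)_h(E)>0\}$, so the infimum defining the packing dimension of a measure can only increase under this restriction, giving $\Dim\mu_h\ge\Dim(\mu_0)_h>\gamma$. Letting $\gamma\uparrow\Dim h(A)$ completes the argument. I expect the lifting step to be the main obstacle: one must simultaneously secure a measurable section, guarantee that the pushed-forward measure recovers $\nu$ exactly, and descend to a compactly supported measure genuinely living on $A$ while preserving the packing dimension of the image, the verification that restriction to a positive-measure set does not decrease $\Dim\mu_h$ being the delicate soft point.
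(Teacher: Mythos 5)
Your proof is correct and takes essentially the same route as the paper's source: the paper itself gives no proof of Lemma~\ref{l:h}, citing \cite[Lemma~4.2]{SX}, whose argument is precisely your combination of the Joyce--Preiss-type characterization of packing dimension for analytic sets with a measurable-selection lifting of the measure back through the Borel map, followed by restriction to a compact set of positive measure. One small attribution slip: the selection/lifting step (your Jankov--von Neumann argument), not the analytic-set characterization, is what Lubin's paper \cite{L} provides.
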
 

\begin{corollary} \label{c:h2} Let $A\subset \R^n$ be an analytic set. Then 
\[\Dim A=\sup\{ \Dim K: K\subset  A \textrm{ is compact}\}.\]
\end{corollary}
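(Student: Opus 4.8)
The plan is to derive this directly from Lemma~\ref{l:h} by specializing the Borel map to the identity. The inequality $\Dim A\geq \sup\{\Dim K: K\subset A \textrm{ compact}\}$ is immediate from the monotonicity of packing dimension: if $K\subset A$, then any countable cover $A=\bigcup_i E_i$ induces the cover $K=\bigcup_i (E_i\cap K)$, and since upper Minkowski dimension is monotone we have $\overline{\dim}_M(E_i\cap K)\leq \overline{\dim}_M E_i$; taking the supremum over $i$ and then the infimum over covers yields $\Dim K\leq \Dim A$. The content of the corollary lies entirely in the reverse inequality.

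For that direction I would apply Lemma~\ref{l:h} with $n=d$ and $h=\id\colon \R^n\to \R^n$. Then $h(A)=A$ and $\mu_h=\mu\circ \id^{-1}=\mu$, so the lemma becomes
\[\Dim A=\sup\{\Dim \mu: \mu\in \iP_c(A)\}.\]
It therefore suffices to bound $\Dim \mu$ by the packing dimension of a compact subset of $A$ for each fixed $\mu\in \iP_c(A)$. Setting $K=\supp(\mu)$, the very definition of $\iP_c(A)$ guarantees that $K$ is a compact subset of $A$; in particular $K$ is Borel and $\mu(K)=1>0$. Since $\Dim \mu$ is by definition the infimum of $\Dim E$ over all Borel sets $E$ with $\mu(E)>0$, we get $\Dim \mu\leq \Dim K=\Dim(\supp\mu)$. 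As $\supp(\mu)$ is one of the compact subsets of $A$ over which the final supremum is taken, this gives
\[\Dim A=\sup\{\Dim \mu: \mu\in \iP_c(A)\}\leq \sup\{\Dim K: K\subset A \textrm{ compact}\}.\]

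I expect no substantive obstacle: once Lemma~\ref{l:h} is in hand, the argument is pure bookkeeping, as all the analytic content—the approximation of the dimension of $A$ by measures supported on it—has already been absorbed into that lemma. The only point deserving a moment's care is the direction of the definition of $\Dim\mu$ as an \emph{infimum} over Borel sets of positive measure; it is precisely this infimum formulation that makes $\Dim\mu\leq \Dim(\supp\mu)$ automatic, so that no lower bound on the $\mu$-measures of balls is required.
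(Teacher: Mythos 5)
Your proof is correct and follows exactly the route the paper intends: the corollary sits immediately after Lemma~\ref{l:h}, and the intended derivation is precisely the specialization $h=\id$ (so that $\mu_h=\mu$), combined with the observation that $\Dim\mu\leq\Dim(\supp\mu)$ because $\supp\mu$ is a compact Borel set of full measure and $\Dim\mu$ is an infimum over Borel sets of positive measure. The easy inequality via monotonicity of packing dimension is also as expected, so there is nothing to add.
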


Recall Notation~\ref{n:Fd} in the case $\beta=d$. For all $\mu\in \iP(\R^d)$, $x\in \R^d$, and $r>0$ let
\[F_{d}^{\mu}(x,r)=\int_{\R^d} \min\{1, r^d |y-x|^{-d}\} \, \mathrm{d} \mu(y). \]
The following theorem is due to Falconer and Howroyd \cite[Corollary~3]{FH}. 

\begin{theorem}[Falconer--Howroyd] \label{t:FH} Let $\mu\in \iP(\R^d)$. Then
$$\Dim \mu=\sup\left\{ \gamma: \liminf_{r\to 0+} r^{-\gamma} F_{d}^{\mu}(x,r)=0 \textrm{ for $\mu$-a.e.}\ x\in \R^d\right\}.$$
\end{theorem}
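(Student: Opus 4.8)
The plan is to prove the two inequalities separately, writing $S$ for the supremum on the right-hand side.

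The inequality $S\le\Dim\mu$ is immediate. For every $x\in\R^d$ and $r>0$, if $|y-x|\le r$ then $r^d|y-x|^{-d}\ge1$, so the integrand defining $F_{d}^{\mu}(x,r)$ equals $1$ on $B(x,r)$; consequently
\[F_{d}^{\mu}(x,r)\ge\int_{B(x,r)}1\,\mathrm{d}\mu(y)=\mu(B(x,r)).\]
Hence, at any fixed $x$, the vanishing of $\liminf_{r\to0+}r^{-\gamma}F_{d}^{\mu}(x,r)$ forces the vanishing of $\liminf_{r\to0+}r^{-\gamma}\mu(B(x,r))$. Thus every $\gamma$ admissible for $F_{d}^{\mu}$ is admissible for $\mu(B(\cdot,\cdot))$, and Theorem~\ref{t:pmu} gives $S\le\Dim\mu$.

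The reverse inequality $\Dim\mu\le S$ is the substance of the statement. I would fix $\gamma<\gamma_1<\Dim\mu$; by Theorem~\ref{t:pmu} the set of $x$ with $\liminf_{r\to0+}r^{-\gamma_1}\mu(B(x,r))=0$ has full $\mu$-measure, so it suffices to show that at $\mu$-a.e.\ such $x$ one also has $\liminf_{r\to0+}r^{-\gamma}F_{d}^{\mu}(x,r)=0$. First I would split the potential into its ball part and a dyadic tail,
\[F_{d}^{\mu}(x,r)=\mu(B(x,r))+\sum_{k=0}^{\infty}\int_{2^k r<|y-x|\le 2^{k+1}r}r^d|y-x|^{-d}\,\mathrm{d}\mu(y)\le\mu(B(x,r))+\sum_{k=0}^{\infty}2^{-kd}\mu(B(x,2^{k+1}r)),\]
and truncate the sum at $K$ with $2^{K}\approx r^{-a}$ for a small fixed $a>\gamma/d$. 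The range $k>K$ contributes at most $\sum_{k>K}2^{-kd}\lesssim r^{ad}=o(r^{\gamma})$. The delicate part is the initial range $k\le K$, involving the balls $B(x,\rho)$ with $\rho$ between $r$ and $r^{1-a}$: \emph{if} $\mu(B(x,\rho))\lesssim\rho^{\gamma_1}$ held throughout this window, then, using $\gamma_1<d$ to sum the geometric series, $\sum_{k\le K}2^{-kd}(2^{k+1}r)^{\gamma_1}\lesssim r^{\gamma_1}=o(r^{\gamma})$, and the proof would be complete.

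The main obstacle is precisely that $\Dim\mu>\gamma_1$ guarantees $\mu(B(x,\rho))\lesssim\rho^{\gamma_1}$ only along a sequence of scales $\rho\to0$, not on a whole geometric window $[r,r^{1-a}]$; mass sitting just outside a small ball can make $F_{d}^{\mu}(x,r)$ of order one even when $\mu(B(x,r))$ is negligible, so no pointwise argument at a fixed centre can succeed. Overcoming this is the heart of the Falconer--Howroyd argument, and I would follow the box-counting dimension-profile methods of \cite{FH,FM}: reduce, via Fatou's lemma applied to $\int r^{-\gamma}F_{d}^{\mu}(x,r)\,\mathrm{d}\mu(x)$, to a single-variable statement about the averaged potential and the distance-distribution function $r\mapsto(\mu\times\mu)\{|x-y|\le r\}$, and then exploit the monotonicity of these functions together with a covering argument along a geometric sequence of scales to control the tail for $\mu$-a.e.\ $x$ simultaneously. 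It is essential here that the kernel exponent coincides with the ambient dimension $d$: for a strictly smaller exponent the corresponding profile is genuinely smaller than $\Dim\mu$, as the Talagrand--Xiao examples of Subsection~\ref{ss:fr} show, so the comparison must use the full power $|y-x|^{-d}$ through an integrable-tail estimate. This final comparison is the step I expect to be the most technical.
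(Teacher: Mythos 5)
Your first inequality $S\le\Dim\mu$ is correct, and it is exactly how the paper treats that direction (note the paper never proves Theorem~\ref{t:FH} from scratch: it deduces it from the stronger Theorem~\ref{t:Gd} via the sandwich $\mu(B(x,r))\le F_d^{\mu}(x,r)\le G_d^{\mu}(x,r)$ together with Theorem~\ref{t:pmu}). Your dyadic decomposition and the conditional estimate for the middle range are also sound. But there is a genuine gap precisely where you flag it, and the remedy you sketch would not close it. Applying Fatou's lemma to $\int r^{-\gamma}F_d^{\mu}(x,r)\,\mathrm{d}\mu(x)$ reduces the a.e.\ statement to the \emph{averaged} statement $\liminf_{r\to0+}r^{-\gamma}\iint\min\{1,r^d|x-y|^{-d}\}\,\mathrm{d}\mu(x)\,\mathrm{d}\mu(y)=0$, which is strictly stronger and is false in general. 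Indeed, take $\mu=\frac12(\mu_1+\mu_2)$ with $\mu_1,\mu_2$ supported on disjoint compact sets, each $\mu_i$ a Cantor-type measure whose local dimension oscillates between $t$ and $s$ (with $t<\gamma<s$) along prescribed ranges of scales, the two low-dimension phases interleaved so that at every small $r$ at least one $\mu_i$ satisfies $\mu_i(B(x,r))\ge r^{t+\eta}$ on its support. Then $\Dim\mu=s>\gamma$ by Theorem~\ref{t:pmu}, yet for every small $r$ the double integral is at least $\frac14\int\mu_i(B(x,r))\,\mathrm{d}\mu_i(x)\ge\frac14 r^{t+\eta}$, so the averaged liminf at exponent $\gamma$ is infinite. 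The good scales depend on the point $x$, and no averaging over $x$ can recover them; this is also not what Falconer--Howroyd actually do.

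The missing ingredient --- the real engine of \cite{FM,FH} and of this paper's proof of Theorem~\ref{t:Gd} --- is a uniform-in-scale \emph{relative} density estimate obtained from a Fubini-type maximal inequality plus Borel--Cantelli: for fixed $\eps>0$ and $0<a<1$, for $\mu$-a.e.\ $x$ there is $r_0(x)>0$ such that
\[
\mu(B(x,h))\le (4h/r)^{(1+\eps)d}\,\mu(B(x,r))\qquad\textrm{for all } 0<r<r_0 \textrm{ and all } h\ge r^{a}
\]
(this is Lemma~\ref{l:app1}, Lemma~\ref{l:app2} and Corollary~\ref{c:main}, stated there for boxes). Note this is weaker than the uniform power law $\mu(B(x,\rho))\lesssim\rho^{\gamma_1}$ you asked for on the window, and that is the whole point: it holds for \emph{all} small $r$ simultaneously, and inserting it into your sum (using monotonicity $\mu(B(x,h))\le\mu(B(x,r^{a}))$ for the scales $h$ between $r$ and $r^{a}$) gives $\sum_{k\le K}2^{-kd}\mu(B(x,2^{k+1}r))\lesssim_{\eps,a,d}\mu(B(x,r))\,r^{-\delta}$, where $\delta=\delta(\eps,a,d)$ can be made arbitrarily small. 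You then invoke Theorem~\ref{t:pmu} only along the sparse sequence of scales where $\mu(B(x,r))\le r^{\gamma_1}$, obtaining $F_d^{\mu}(x,r)\lesssim r^{\gamma_1-\delta}+r^{ad}=o(r^{\gamma})$ along that sequence. So your outline becomes a proof only after this Borel--Cantelli lemma is stated and proved (or after proving Theorem~\ref{t:Gd}, as the paper does); as written, the hard inequality $\Dim\mu\le S$ remains unestablished.
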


\begin{notation} 
For all $d\in \N^+$ and $x=(x_1,\dots,x_d)\in \R^d$ let 
\[ I_d(x)=\prod_{i=1}^d \min\{1,|x_i|^{-1}\}\]
with the convention $\min\{1,0^{-1}\}=1$. 
\end{notation}

\begin{definition} \label{d:gdm}
Let $n\in \N$ and let $d\in \N^+$.
Let $\mu$ be a Borel probability measure on $\R^{n+d}$. For $x=(u,v)\in \R^n\times \R^d$ and $r>0$ let $\mu_{u,r}$ denote the Borel measure on $\R^d$ such that for every Borel set $E\subset \R^d$ we have
\begin{equation*} \mu_{u,r}(E)=\mu(D(u,r)\times E).
\end{equation*}
Let us define
\begin{equation*}
G_{d}^{\mu}(x,r)=\int_{\R^d} I_d\left( \frac{y-v}{r}\right) \, \mathrm{d} \mu_{u,r}(y).
\end{equation*}
\end{definition} 

We will prove the following result. 

\begin{theorem}  \label{t:Gd}
Let $\mu\in \iP(\R^{n+d})$ for some $n\in \N$ and $d\in \N^+$. Then
\[ \Dim \mu=\sup\left\{\gamma: \liminf_{r\to 0+} r^{-\gamma} G_{d}^{\mu}(x,r)=0 \textrm{ for $\mu$-a.e.}\ x\in \R^{n+d}\right\}.
\]
\end{theorem}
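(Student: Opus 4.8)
The plan is to reduce this to the Falconer--Howroyd theorem (Theorem~\ref{t:FH}) by showing that the new kernel $G_d^\mu(x,r)$ is comparable, in the sense relevant to the $\liminf$ condition, to the kernel $F_{n+d}^\mu(x,r)$ built from $\min\{1,|z|^{-(n+d)}\}$ on $\R^{n+d}$. Since both characterizations take the form $\sup\{\gamma:\liminf_{r\to 0+} r^{-\gamma}(\cdot)=0\}$ for $\mu$-a.e.\ point, it suffices to prove that for $\mu$-a.e.\ $x=(u,v)$ the two quantities $G_d^\mu(x,r)$ and $F_{n+d}^\mu(x,r)$ have the same ``$\liminf$-exponent'', i.e.\ $\liminf_{r\to 0+} r^{-\gamma}G_d^\mu(x,r)=0$ if and only if $\liminf_{r\to 0+} r^{-\gamma}F_{n+d}^\mu(x,r)=0$. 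The cleanest route is to establish two-sided bounds between $G_d^\mu(x,r)$ and $F_{n+d}^\mu(x,2r)$ (or some fixed multiple of $r$), with multiplicative constants depending only on $n$ and $d$; constant factors and bounded changes of scale do not affect the supremum defining $\gamma$.

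First I would unwind the definitions. Writing $z=(w,y)\in\R^n\times\R^d$ and $x=(u,v)$, the Falconer--Howroyd kernel is $F_{n+d}^\mu(x,r)=\int \min\{1,r^{n+d}|z-x|^{-(n+d)}\}\,\mathrm{d}\mu(z)$, while $G_d^\mu(x,r)=\int_{D(u,r)\times\R^d} \prod_{i=1}^d \min\{1,(r^{-1}|y_i-v_i|)^{-1}\}\,\mathrm{d}\mu(w,y)$, where the restriction to $D(u,r)$ comes from the definition of $\mu_{u,r}$. The key observation is that $\prod_{i=1}^d \min\{1, r|y_i-v_i|^{-1}\}$ is comparable to $\min\{1, r^d \|y-v\|_\infty^{-d}\}$ up to constants depending on $d$: one direction is immediate since each factor is at most $1$ and at most $r\|y-v\|_\infty^{-1}$ when the largest coordinate dominates, and the reverse follows by controlling the number of coordinates $i$ with $|y_i-v_i|\geq r$. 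Combined with the indicator of $D(u,r)=\{|w-u|\leq r$ coordinatewise$\}$, one sees that the integrand of $G_d^\mu$ behaves like $\mathbbm{1}_{D(u,r)}(w)\cdot\min\{1,r^d\|y-v\|^{-d}\}$, which on $\R^{n+d}$ with the maximum norm is precisely $\min\{1, r^{n+d}\|(w,y)-(u,v)\|^{-(n+d)}\}$ restricted appropriately. Since maximum and Euclidean norms on $\R^{n+d}$ are equivalent with constants depending only on $n+d$, this yields the desired comparison with $F_{n+d}^\mu$.

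The main obstacle will be the product structure of $I_d$: unlike a radial kernel, $I_d(x)=\prod_i\min\{1,|x_i|^{-1}\}$ is anisotropic, so I cannot simply bound it by a single power of $\|y-v\|$. The delicate point is the lower bound on $G_d^\mu$, where I must show the product of the $d$ coordinatewise factors is not too small compared to $\min\{1,r^d\|y-v\|^{-d}\}$; a coordinate with $|y_i-v_i|$ much larger than $r$ contributes a small factor $r/|y_i-v_i|$, and several such coordinates could in principle drive the product far below $r^d\|y-v\|^{-d}$. I would handle this by a dyadic decomposition of $\R^d$ into annular shells according to $\|y-v\|$ and estimating the integral shell-by-shell, following the techniques from \cite{FM,FH}; the crucial fact is that the discrepancy between $I_d$ and the radial kernel is at most a logarithmic (polynomial-in-$\log$) factor, which is absorbed by the $\liminf r^{-\gamma}$ since any fixed power of $\log(1/r)$ is negligible against $r^{-\eps}$. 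This logarithmic slack is harmless for identifying the supremal $\gamma$, and it is precisely why the theorem asserts equality of the packing dimension rather than of finer gauge functions.
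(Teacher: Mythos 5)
Your reduction to Theorem~\ref{t:FH} rests on a pointwise comparability claim that is false. One direction does hold: since each factor satisfies $\min\{1,r|y_i-v_i|^{-1}\}\geq \min\{1,r||y-v||^{-1}\}$, we get $I_d\bigl((y-v)/r\bigr)\geq \min\{1,r^d||y-v||^{-d}\}$. But the reverse inequality fails by \emph{polynomial} factors, not constants or logarithms: at $y-v=(Tr,r,\dots,r)$ with $T\geq 1$ one has $I_d\bigl((y-v)/r\bigr)=T^{-1}$ while $\min\{1,r^d||y-v||^{-d}\}=T^{-d}$, a ratio of $T^{d-1}$, unbounded as soon as $d\geq 2$; note also that only one large coordinate is needed, so ``controlling the number of coordinates with $|y_i-v_i|\geq r$'' cannot repair this. (Your stated worry goes in the wrong direction: the product kernel can never be far \emph{below} the radial one, only far above it.) The slab restriction breaks the other inequality as well: a point mass at $(w_0,v)$ with $||w_0-u||=1$ gives $G_d^{\mu}(x,r)=0$ for $r<1$ but $F_{n+d}^{\mu}(x,Cr)=(Cr)^{n+d}>0$. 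Moreover the discrepancy survives integration and is not logarithmic, so the dyadic-shell argument cannot absorb it. Concretely, take $n=0$, $d\geq 2$, and let $\mu$ be the measure on the segment $\{(t,0,\dots,0): 0<t\leq 1\}$ with density $t^{s-1}$ where $1<s<d$. At $x=\mathbf{0}$ a direct computation gives
\begin{equation*}
G_d^{\mu}(\mathbf{0},r)\asymp r \quad\textrm{while}\quad F_d^{\mu}(\mathbf{0},r)\asymp r^{s},
\end{equation*}
so the two liminf exponents differ by $s-1>0$: no comparison up to constants, bounded rescaling of $r$, or powers of $\log(1/r)$ can bridge a gap between $r$ and $r^s$.

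Of course, $\mathbf{0}$ is a $\mu$-null exceptional point in this example, and that is precisely the content of Theorem~\ref{t:Gd}: the set of points where the anisotropic kernel sees substantially more mass than cubes (equivalently, than $F$) is $\mu$-null. This is an intrinsically measure-theoretic statement about how $\mu$ distributes mass in long thin boxes around typical points, and it cannot follow from any pointwise kernel manipulation; indeed, if the kernels were comparable, Theorem~\ref{t:Gd} would not strictly strengthen Theorem~\ref{t:FH}, whereas the paper points out $F_d^{\mu}\leq G_d^{\mu}$ and that the strengthening is what makes the applications to non-identically-distributed coordinates and to graphs possible. The missing ingredient in your proposal is the a.e.\ density estimate for elongated boxes: for $\mu$-a.e.\ $x$ and all sufficiently small $r$, $\nu(D(x,h_1,\dots,h_d))\leq \nu(D(x,r))\prod_{i=1}^{d}(4h_i/r)^{1+\eps}$ whenever $h_i\geq r^{a}$ (Lemma~\ref{l:app2} and Corollary~\ref{c:main}, in the spirit of Falconer--Mattila), combined with multidimensional integration by parts (Theorem~\ref{t:parts}), which converts $\int I_d(y/r)\,\mathrm{d}\nu(y)$ into an integral of box masses $g(h)$ against $r^d(h_1\cdots h_d)^{-2}\,\mathrm{d}h$ that the density estimate then controls; the easy direction follows already from $G_d^{\mu}(x,r)\geq \mu(D(x,r))$ and Theorem~\ref{t:pmu}, with no need for $F_{n+d}$ at all. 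Your proposal contains no substitute for the density estimate, so the gap is essential rather than technical.
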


Note that for all $\mu\in \iP(R^d)$, $x\in \R^d$ and $r>0$ we have
\[\mu(B(x,r))\leq F_d^{\mu}(x,r)\leq G_{d}^{\mu}(x,r),\]
so provided Theorem~\ref{t:pmu}, Theorem~\ref{t:Gd} generalizes Theorem~\ref{t:FH} even in the case $n=0$. Theorem~\ref{t:Gd} will allow us to apply our methods for more general Gaussian fields, and extend it from images to graphs as well. Its proof is deferred to Section~\ref{s:tech}.

\begin{definition}
For each map $f\colon \R^n\to \R^d$, for all 
$x,y\in \R^n$ and $r>0$ define 
\[I^{f}_d(x,y,r)=
I_{d}\left(\frac{f(y)-f(x)}{r}\right).\]
\end{definition}

 Theorem~\ref{t:Gd} implies the following.

\begin{corollary} \label{c:gdf}
Let $f\colon \R^n\to \R^d$ be a Borel map and let 
$\mu\in \iP(\R^n)$. Then
\begin{align*} 
\Dim \mu_f&=\sup\left\{\gamma: \liminf_{r\to 0+} r^{-\gamma} \int_{\R^n} I^f_{d}(x,y,r) \, \mathrm{d} \mu(y)=0 \textrm{ for $\mu$-a.e.}\ x\in \R^n \right\}, \\
\Dim \mu_{f^*}&=\sup\left\{\gamma: \liminf_{r\to 0+} r^{-\gamma} \int_{D(x,r)} I^f_{d}(x,y,r) \, \mathrm{d} \mu(y)=0 \textrm{ for $\mu$-a.e.}\ x\in \R^n \right\}.
\end{align*}
\end{corollary}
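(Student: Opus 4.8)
The plan is to derive both identities directly from Theorem~\ref{t:Gd}, applied to the two pushforward measures $\mu_f\in\iP(\R^d)$ and $\mu_{f^*}\in\iP(\R^{n+d})$ (both Borel probability measures, since $f$ is Borel), and then to rewrite the kernels $G_d^{\mu_f}$ and $G_d^{\mu_{f^*}}$ as integrals against $\mu$ itself via the change-of-variables formula for pushforwards. Throughout I use the elementary fact that for a Borel map $h$ and a property $P$ of points in the target, ``$P$ holds for $\mu_h$-a.e.\ point'' is equivalent to ``$P(h(x))$ holds for $\mu$-a.e.\ $x$'', because $\mu_h(S)=\mu(h^{-1}(S))$ and so $h^{-1}$ carries null sets to null sets in both directions.

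For the first identity I would apply Theorem~\ref{t:Gd} with $n$ replaced by $0$ to the measure $\mu_f$ on $\R^d$, identifying a point $(u,v)\in\R^0\times\R^d$ with $v$. In this degenerate case the factor $D(u,r)$ is the whole space $\R^0$, so $(\mu_f)_{u,r}=\mu_f$, and hence
\[
G_d^{\mu_f}(v,r)=\int_{\R^d} I_d\left(\frac{y-v}{r}\right)\,\mathrm{d}\mu_f(y)=\int_{\R^n} I_d\left(\frac{f(w)-v}{r}\right)\,\mathrm{d}\mu(w),
\]
the last equality being the change of variables for $\mu_f=\mu\circ f^{-1}$. Specializing to $v=f(x)$ turns the integrand into $I_d^f(x,w,r)$, and translating the ``$\mu_f$-a.e.\ $v$'' condition of Theorem~\ref{t:Gd} into the corresponding ``$\mu$-a.e.\ $x$'' condition (with $v=f(x)$) yields precisely the stated supremum for $\Dim\mu_f$.

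For the second identity I would apply Theorem~\ref{t:Gd} directly to $\mu_{f^*}$ on $\R^{n+d}=\R^n\times\R^d$. Writing a point as $x=(u,v)$, Definition~\ref{d:gdm} gives $(\mu_{f^*})_{u,r}(E)=\mu_{f^*}(D(u,r)\times E)=\mu(\{w\in D(u,r): f(w)\in E\})$, since $f^*$ sends $w$ to $(w,f(w))$. Consequently
\[
G_d^{\mu_{f^*}}(x,r)=\int_{D(u,r)} I_d\left(\frac{f(w)-v}{r}\right)\,\mathrm{d}\mu(w)\qquad (x=(u,v)).
\]
Because $\mu_{f^*}$ is carried by the graph $\{(w,f(w))\}$ and $f^*$ is injective, for $\mu_{f^*}$-a.e.\ point one may set $(u,v)=(x,f(x))$ with $x$ ranging over a set of full $\mu$-measure; substituting $v=f(x)$ turns the integrand into $I_d^f(x,w,r)$ and produces $\int_{D(x,r)}I_d^f(x,w,r)\,\mathrm{d}\mu(w)$, which is the claimed expression for $\Dim\mu_{f^*}$.

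The only genuinely delicate point, which I expect to be the main obstacle, is the a.e.\ translation in the final step of each case. One must verify that the maps $v\mapsto G_d^{\mu_f}(v,r)$ and $x\mapsto G_d^{\mu_{f^*}}(x,r)$ are Borel measurable, so that the exceptional sets arising in the $\liminf$ conditions are measurable and transport correctly under $h^{-1}$, and that the $\liminf$ over $r\to0+$ may be evaluated along a fixed countable sequence of radii in order to avoid any uncountable-intersection measurability issue. Granting these routine verifications, both formulas follow at once from Theorem~\ref{t:Gd}.
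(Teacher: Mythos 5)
Your proposal is correct and is essentially the paper's own (implicit) derivation: the paper states the corollary as an immediate consequence of Theorem~\ref{t:Gd}, intending exactly the application to $\mu_f$ with $n=0$ (using the convention $\R^0=\{0\}$, so that $G_d^{\mu_f}$ reduces to the stated kernel) and to $\mu_{f^*}$ on $\R^n\times\R^d$, followed by the change of variables for pushforwards and the transport of $\mu_h$-null sets to $\mu$-null sets. Your measurability caveats (Borel measurability of $G_d^{\mu}(\cdot,r)$ and reduction of the $\liminf$ to rational radii, using monotonicity of $G_d^{\mu}(x,r)$ in $r$) are the same routine verifications the paper relies on elsewhere, e.g.\ in Lemma~\ref{l:pr}.
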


\section{The proof of the Main Theorem} \label{s:main} 

\subsection{The lower bound} \label{ss:lb}  

First we prove in Lemma~\ref{l:1} that the expectation in Definition~\ref{d:Z} exists and admits a useful integral representation. The main goal of this subsection is to prove Theorem~\ref{t:lbb}. First we need a technical lemma. 

\begin{lemma} \label{l:G}
Let $Z\colon \R^n\to \R^m$ be a jointly measurable random map defined on the probability space $(\Omega,\iF, \P)$. Then for each Borel set $B\subset \R^m$ the set
\begin{equation} \label{eq:GB}
\Gamma(B)=\{(\omega,t,s)\in \Omega \times \R^n \times \R^n: 
Z(s,\omega)-Z(t,\omega)\in B\} 
	\end{equation} 
is in $\iF \otimes \iB(\R^n)\otimes \iB(\R^n)$. 
\end{lemma}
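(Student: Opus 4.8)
The plan is to exhibit $\Gamma(B)$ as the preimage of $B$ under a single jointly measurable map built out of $Z$, and then invoke the joint measurability hypothesis. Concretely, I would first consider the two coordinate-swapping projections $\pi_t,\pi_s\colon \Omega\times\R^n\times\R^n\to \R^n\times\Omega$ given by $\pi_t(\omega,t,s)=(t,\omega)$ and $\pi_s(\omega,t,s)=(s,\omega)$. Each is measurable from $\iF\otimes\iB(\R^n)\otimes\iB(\R^n)$ to $\iB(\R^n)\otimes\iF$, since a map into a product space is measurable exactly when both of its coordinate maps are, and here those coordinate maps are the ordinary projections $(\omega,t,s)\mapsto t$ (resp.\ $s$) and $(\omega,t,s)\mapsto\omega$. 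Because $Z\colon \R^n\times\Omega\to\R^m$ is jointly measurable, the compositions $Z\circ\pi_t$ and $Z\circ\pi_s$, that is, the maps $(\omega,t,s)\mapsto Z(t,\omega)$ and $(\omega,t,s)\mapsto Z(s,\omega)$, are measurable from $\iF\otimes\iB(\R^n)\otimes\iB(\R^n)$ to $\iB(\R^m)$.

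Next I would assemble these into the pair map $G(\omega,t,s)=(Z(t,\omega),Z(s,\omega))$, which is measurable into $(\R^m\times\R^m,\iB(\R^m)\otimes\iB(\R^m))$ by the same coordinatewise criterion. The subtraction map $S\colon \R^m\times\R^m\to\R^m$, $S(a,b)=b-a$, is continuous, hence Borel. Since $\R^m$ is second countable, one has $\iB(\R^m\times\R^m)=\iB(\R^m)\otimes\iB(\R^m)$, so continuity of $S$ yields measurability from the product $\sigma$-algebra $\iB(\R^m)\otimes\iB(\R^m)$ to $\iB(\R^m)$. Therefore the composition $\Phi=S\circ G$, namely $\Phi(\omega,t,s)=Z(s,\omega)-Z(t,\omega)$, is measurable from $\iF\otimes\iB(\R^n)\otimes\iB(\R^n)$ to $\iB(\R^m)$, and I conclude by writing $\Gamma(B)=\Phi^{-1}(B)\in\iF\otimes\iB(\R^n)\otimes\iB(\R^n)$ for every Borel set $B\subset\R^m$.

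The argument is essentially bookkeeping with product $\sigma$-algebras, so there is no serious obstacle; the two points needing care are (i) the coordinate swap, ensuring that $Z$, which is assumed measurable on $\R^n\times\Omega$ with the factors in that order, receives its arguments in the correct order after projection, and (ii) the identification $\iB(\R^m\times\R^m)=\iB(\R^m)\otimes\iB(\R^m)$, which is what allows passing from continuity of subtraction to measurability with respect to the product $\sigma$-algebra and is the only place where separability of $\R^m$ is used. Should one prefer to avoid this identity, an equally valid route is to verify the claim first for $B$ an open box, where $S^{-1}(B)$ is explicitly an open subset of $\R^m\times\R^m$ covered by countably many product boxes, and then to extend to arbitrary Borel $B$ via a monotone class argument, noting that $\{B\subset\R^m:\Phi^{-1}(B)\in\iF\otimes\iB(\R^n)\otimes\iB(\R^n)\}$ is a $\sigma$-algebra containing a generating family of the Borel sets.
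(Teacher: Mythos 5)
Your proof is correct, but it is organized differently from the paper's. You argue at the level of maps: you establish once and for all that the difference map $\Phi(\omega,t,s)=Z(s,\omega)-Z(t,\omega)$ is measurable from $\iF\otimes\iB(\R^n)\otimes\iB(\R^n)$ to $\iB(\R^m)$, by composing the coordinate-swapped projections with $Z$, pairing the results, and applying the continuous subtraction map; every Borel set $B$ is then handled uniformly via $\Gamma(B)=\Phi^{-1}(B)$. The paper instead argues at the level of sets: it notes that $\{B\subset\R^m:\Gamma(B)\in\iF\otimes\iB(\R^n)\otimes\iB(\R^n)\}$ is a $\sigma$-algebra, reduces to the case of an open set $U$, and then writes $\Gamma(U)$ explicitly as a countable union of sets $\{(\omega,t,s):Z(t,\omega)\in V_i,\ Z(s,\omega)\in W_i\}$ with $V_i,W_i$ drawn from a countable basis, pulling each piece back through the projections $p,q$. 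The two arguments rest on the same ingredients---joint measurability of $Z$ and second countability of $\R^m$---but package them differently: your compositional route invokes the identity $\iB(\R^m\times\R^m)=\iB(\R^m)\otimes\iB(\R^m)$ as an external fact (correctly flagged as the one place separability is used), whereas the paper's hand-made basis decomposition of the open set $\{(a,b):b-a\in U\}$ proves exactly the instance of that identity it needs inline, at the cost of the preliminary good-sets reduction. Your approach is arguably cleaner and yields the stronger, reusable statement that $\Phi$ itself is jointly measurable; the alternative route you sketch at the end (check boxes or open sets, then extend by a $\sigma$-algebra argument) is in substance the paper's own proof.
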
 

\begin{proof}
As $\{B\subset \R^n: \Gamma(B)\in \iF \otimes \iB(\R^n)\otimes \iB(\R^n)\}$ is clearly a $\sigma$-algebra and $\Gamma(\emptyset)=\emptyset$, it is enough to prove the statement for an arbitrarily fixed non-empty open set $U\subset \R^m$. Let $\iU$ be a countable open basis of $\R^m$. Then there are sequences $\{V_i\}_{i\geq 1}$ and $\{W_i\}_{i\geq 1}$ such that $V_i,W_i\in \iU$ for all $i$ and 
\[\Gamma(U)=\bigcup_{i=1}^{\infty}\{(\omega,t,s): Z(t,\omega)\in V_i \textrm{ and } Z(s,\omega)\in W_i\}. \]
As $Z$ is jointly measurable, the sets 
\[E_i=\{(\omega,t): Z(t,\omega)\in V_i\} \quad \textrm{and} \quad  F_i=\{(\omega,s): Z(s,\omega)\in W_i\} \] 
satisfy $E_i,F_i\in \iF \otimes \iB(\R^n)$ for all $i$. Let $p,q\colon \Omega\times \R^n\times \R^n\to \Omega \times \R^n$ be the projections defined as 
\[p(\omega,t,s)=(\omega,t) \quad \textrm{and} \quad q(\omega,t,s)=(\omega,s).\]
Then clearly for all $i$ we have $p^{-1}(E_i), 
q^{-1}(F_i)\in \iF \otimes \iB(\R^n)\otimes \iB(\R^n)$, so 
\[\Gamma(U)=\bigcup_{i=1}^{\infty} \left(p^{-1}(E_i)\cap q^{-1}(F_i)\right)\in \iF \otimes \iB(\R^n)\otimes \iB(\R^n).\]
The proof is complete.  
\end{proof}

\begin{notation} 
For a set $\Gamma\subset \Omega\times \R^n\times \R^n$ and for $\omega\in \Omega$ and $t\in \R^n$ let 
\begin{align*} \Gamma_t&=\{(\omega,s)\in \Omega\times \R^n: (\omega,t,s)\in \Gamma\}, \\
\Gamma_{\omega,t}&=\{s\in \R^n: (\omega,t,s)\in \Gamma\}.
\end{align*}
\end{notation}

Below $||\cdot||$ and $B(z,r)$ denote a general norm and a corresponding ball. 

\begin{lemma} \label{l:1}
Let $Z\colon \R^n\to \R^m$ be a jointly measurable random map. Endow $\R^m$ with a norm $||\cdot||$. If $\mu\in \iP(\R^n)$ then for all $t\in \R^n$ and $r>0$ we have  
\[\E(\mu_Z(B(Z(t),r)))=\int_{\R^n} \P(||Z(s)-Z(t)||\leq r) \, \mathrm{d}  \mu(s).\]
\end{lemma}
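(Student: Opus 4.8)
The plan is to establish the identity by recognizing it as an instance of Fubini--Tonelli applied to the indicator of a suitably defined measurable set, together with an unpacking of the definitions of $\mu_Z$ and the expectation. First I would expand the left-hand side using the definition $\mu_Z = \mu \circ Z^{-1}$, so that for each fixed $\omega \in \Omega$ and $t \in \R^n$ we have
\[
\mu_Z(B(Z(t,\omega),r)) = \int_{\R^n} \mathbbm{1}\{Z(s,\omega) \in B(Z(t,\omega),r)\}\, \mathrm{d}\mu(s) = \int_{\R^n} \mathbbm{1}\{\|Z(s,\omega)-Z(t,\omega)\| \le r\}\, \mathrm{d}\mu(s).
\]
Taking expectation then gives a double integral over $\Omega \times \R^n$ (with respect to $\P \otimes \mu$) of the indicator of the event $\{(\omega,s): \|Z(s,\omega)-Z(t,\omega)\| \le r\}$.

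The crucial point that makes this rigorous is measurability of the integrand, and this is exactly where Lemma~\ref{l:G} enters. Fixing $t$, I would apply Lemma~\ref{l:G} with the closed Borel set $B = \{z \in \R^m : \|z\| \le r\}$ (the closed ball of radius $r$ about the origin in the given norm); the section $\Gamma(B)_t = \{(\omega,s) : Z(s,\omega)-Z(t,\omega) \in B\}$ is then a measurable subset of $\Omega \times \R^n$ lying in $\iF \otimes \iB(\R^n)$. Its indicator is therefore jointly measurable, so Tonelli's theorem applies to the nonnegative function $(\omega,s) \mapsto \mathbbm{1}\{\|Z(s,\omega)-Z(t,\omega)\| \le r\}$. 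Tonelli lets me interchange the order of integration without any integrability hypothesis beyond nonnegativity, which is automatic here.

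With the interchange justified, integrating first in $\omega$ gives
\[
\int_{\R^n} \left( \int_{\Omega} \mathbbm{1}\{\|Z(s,\omega)-Z(t,\omega)\| \le r\} \, \mathrm{d}\P(\omega) \right) \mathrm{d}\mu(s) = \int_{\R^n} \P(\|Z(s)-Z(t)\| \le r)\, \mathrm{d}\mu(s),
\]
which is precisely the right-hand side. In particular this computation simultaneously shows that the expectation on the left is well defined: the integrand $\mu_Z(B(Z(t,\omega),r))$ is a measurable function of $\omega$, being (by Tonelli applied in the reverse order) the $\omega$-section integral of a jointly measurable nonnegative function, and it is bounded by $1$ so the expectation is finite.

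The only genuine obstacle is the joint measurability needed to invoke Tonelli, and that has already been isolated and dispatched by Lemma~\ref{l:G}; everything else is a routine application of Tonelli's theorem and the definition of the pushforward measure $\mu_Z$. I would take care to note that $t$ is held fixed throughout, so no measurability in $t$ is required for this particular statement, and to observe that the argument is valid for the general norm $\|\cdot\|$ since Lemma~\ref{l:G} was proved for an arbitrary Borel set $B$.
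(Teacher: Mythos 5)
Your proof is correct and follows essentially the same route as the paper: both reduce the identity to integrating the indicator of the section $\Gamma(B)_t$, whose measurability in $\iF\otimes\iB(\R^n)$ is supplied by Lemma~\ref{l:G}, and then interchange the order of integration via Fubini--Tonelli. Your explicit remark that the nonnegativity of the integrand makes Tonelli applicable (and that this also shows the expectation is well defined) is a fine touch, but it is the same argument as in the paper.
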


\begin{proof} 
Assume that $Z$ is defined on the probability space $(\Omega, \iF, \P)$. Fix $t\in \R^n$ and $r>0$ arbitrarily. Then $B=B(\mathbf{0},r)$ is compact. Define $\Gamma=\Gamma(B)$ according to \eqref{eq:GB}, and let
\[ \iA=\{(\omega,s)\in \Omega \times \R^n: Z(s,\omega)-Z(t,\omega)\in B\}.\]
As $\Gamma\in  \iF \otimes \iB(\R^n)\otimes \iB(\R^n)$ by Lemma~\ref{l:G} and $\iA=\Gamma_t$, we obtain $\iA\in \iF \otimes \iB(\R^n)$. Thus $\iA$ is measurable with respect to $\P\times \mu$, and Fubini's theorem implies that 
\[\E(\mu_Z(B(Z(t),r)))=\int_{\Omega\times \R^n} \mathds{1}_{\iA} \, \mathrm{d}  (\P\times \mu)(\omega,s)=\int_{\R^n} \P(||Z(s)-Z(t)||\leq r) \, \mathrm{d}  \mu(s),\]
which finishes the proof.
\end{proof}

\begin{remark} \label{r:1} 
Lemma~\ref{l:1} shows that Definition~\ref{d:Z} is correct with any norm $||\cdot ||$ on $\R^m$. As all norms on $\R^m$ are equivalent, it easily follows that the definition is independent of the norm.
\end{remark}

Before proving Theorem~\ref{t:lbb} we need the following technical lemma. 

\begin{lemma} \label{l:pr} Let 
$Z\colon \R^n\to \R^m$ be a jointly measurable random map defined on the probability space $(\Omega,\iF, \P)$. Let $\mu\in \iP(\R^n)$ and let $\gamma\in \R$. Then the set
\[\iD=\left\{(\omega,t)\in \Omega \times \R^n: \liminf_{r\to 0+} r^{-\gamma} \mu(\{s: Z(s,\omega) \in B(Z(t,\omega),r)\})=0\right\}\]
is in $\iF \otimes \iB(\R^n)$.
\end{lemma}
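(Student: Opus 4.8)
The plan is to rewrite the condition defining $\iD$, which involves a $\liminf$ over the continuous parameter $r$, as a $\liminf$ over rational $r$, so that measurability of $\iD$ follows by applying countably many $\inf$/$\sup$ operations to measurable functions. For $r>0$ write $\Gamma^{(r)}=\Gamma(B(\mathbf0,r))$ with $\Gamma$ as in \eqref{eq:GB}, and set $\phi_r(\omega,t)=\mu(\Gamma^{(r)}_{\omega,t})$. Since $Z(s,\omega)\in B(Z(t,\omega),r)$ is equivalent to $Z(s,\omega)-Z(t,\omega)\in B(\mathbf0,r)$, we have $\phi_r(\omega,t)=\mu(\{s: Z(s,\omega)\in B(Z(t,\omega),r)\})$, so the function appearing in the definition of $\iD$ is exactly $h(\omega,t,r)=r^{-\gamma}\phi_r(\omega,t)$.

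First I would show that for each fixed $r>0$ the map $(\omega,t)\mapsto\phi_r(\omega,t)$ is $\iF\otimes\iB(\R^n)$-measurable. As $B(\mathbf0,r)$ is Borel, Lemma~\ref{l:G} gives $\Gamma^{(r)}\in\iF\otimes\iB(\R^n)\otimes\iB(\R^n)$. Writing $\phi_r(\omega,t)=\int_{\R^n}\mathbbm{1}_{\Gamma^{(r)}}(\omega,t,s)\,\mathrm{d}\mu(s)$ and using that $\mu$ is a (finite) Borel measure on the last coordinate, Tonelli's theorem shows this partial integral is measurable in $(\omega,t)$.

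Next I would prove that for each fixed $(\omega,t)$ the map $r\mapsto h(\omega,t,r)$ is right-continuous on $(0,\infty)$, and deduce that the $\liminf$ may be taken over $\QQ^+$. The sets $\Gamma^{(r)}_{\omega,t}$ are nondecreasing in $r$, and since the balls are closed we have $\bigcap_{r'>r}\Gamma^{(r')}_{\omega,t}=\Gamma^{(r)}_{\omega,t}$; continuity of the finite measure $\mu$ from above then makes $r\mapsto\phi_r(\omega,t)$ right-continuous, and multiplication by the continuous factor $r^{-\gamma}$ preserves this. For any nonnegative right-continuous $h$ one has $\liminf_{r\to0+}h(\omega,t,r)=\liminf_{q\to0+,\,q\in\QQ^+}h(\omega,t,q)$: restricting the approach to a dense set cannot decrease the $\liminf$, while right-continuity lets one replace any minimizing sequence $r_k\to0$ by rationals $q_k\downarrow r_k$ with $h(\omega,t,q_k)\to\liminf_{r\to0+}h$, which gives the reverse inequality.

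Finally, set $L(\omega,t)=\sup_{m\in\N}\inf_{q\in\QQ^+\cap(0,1/m)}h(\omega,t,q)$, which by the previous paragraph equals the $\liminf$ in the definition of $\iD$. Each inner infimum is a countable infimum of the measurable functions $(\omega,t)\mapsto q^{-\gamma}\phi_q(\omega,t)$, hence measurable, and the supremum over $m$ is again measurable, so $L$ is $\iF\otimes\iB(\R^n)$-measurable. Since $h\ge0$ forces $L\ge0$, we conclude $\iD=\{(\omega,t):L(\omega,t)=0\}=L^{-1}(\{0\})\in\iF\otimes\iB(\R^n)$. The only delicate point is this passage from the uncountable $\liminf_{r\to0+}$ to the countable one: a $\liminf$ over a continuum of measurable functions need not be measurable, and it is precisely the right-continuity — i.e.\ monotonicity together with continuity of $\mu$ from above on closed balls — that licenses the reduction to $\QQ^+$; the remaining ingredients (Lemma~\ref{l:G}, Tonelli, and closure of measurability under countable $\inf$/$\sup$) are routine.
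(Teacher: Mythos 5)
Your proof is correct and follows essentially the same route as the paper's: both reduce the $\liminf$ over real $r$ to a countable one over rational $r$ using a regularity property of $r\mapsto\mu(\{s:Z(s,\omega)\in B(Z(t,\omega),r)\})$, and both obtain the fixed-$r$ measurability of $(\omega,t)\mapsto\mu(\Gamma_{\omega,t})$ from Lemma~\ref{l:G} together with a section-measure/Tonelli-type theorem (the paper cites \cite[Theorem~17.25]{Ke} where you invoke Tonelli). The only cosmetic difference is in justifying the reduction to rationals: the paper uses monotonicity in $r$ alone (so the sublevel sets are unions of non-degenerate intervals, allowing approximation from the left), while you use right-continuity coming from continuity of $\mu$ from above on closed balls (approximating from the right); both are valid.
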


\begin{proof} Lemma~\ref{l:1} yields that $\{\mu_Z (B(Z(t),r))\}_{r>0}$ is a set of random variables which are clearly non-decreasing in $r$. Therefore for all $\eps>0$ the random set $\{r>0: r^{-\gamma} \mu_Z (B(Z(t),r))<\eps\}$ is always a union of non-degenerate intervals. Thus
\[ \iD=
\bigcap_{k=1}^{\infty} \left(\bigcup_{r\in \QQ\cap (0,1/k)} \iD_{k,r}\right),\]
where
\[\iD_{k,r}=\left\{(\omega,t)\in  \Omega \times \R^n: 
r^{-\gamma} \mu(\{s: Z(s,\omega) \in B(Z(t,\omega),r)\})<1/k\right\}.\]
Fix $k\in \N^+$ and $r\in \QQ\cap (0,1/k)$, it is enough to show that $\iD_{k,r}\in \iF\otimes \iB(\R^n)$. Let $F\colon \Omega\times \R^n\to [0,1]$ be defined as 
\[F(\omega,t)=\mu(\{s: Z(s,\omega) \in B(Z(t,\omega),r)\}),\]
it is clearly enough to prove that $F$ is measurable for $\iF \otimes \iB(\R^n)$. Let $B=B(\mathbf{0},r)$ and let 
\[\Gamma=\{(\omega,t,s)\in \Omega \times \R^n \times \R^n: Z(s,\omega)-Z(t,\omega)\in B\}.\]
Lemma~\ref{l:G} yields that $\Gamma\in \iF \otimes \iB(\R^n)\otimes \iB(\R^n)$, and we have $F(\omega,t)=\mu(\Gamma_{\omega,t})$. Therefore \cite[Theorem~17.25]{Ke} implies that $F$ is measurable for $\iF\otimes \iB(\R^n)$, see also \cite[Exercise~17.36]{Ke}. The proof is complete.
\end{proof}

\begin{proof}[Proof of Theorem~\ref{t:lbb}] 
Let $Z\colon \R^n\to \R^m$ be a jointly measurable random map defined on the probability space $(\Omega,\iF,\P)$. 
Assume that $\mu\in \iP_c(\R^n)$ and $A\subset \R^n$ is an analytic set. 
First we prove that, almost surely, 
\begin{equation} \label{eq:muz} 
\Dim \mu_Z\geq \Dim_Z \mu.
\end{equation} 
Fix an arbitrary $\gamma<\Dim_{Z} \mu$, it is enough to show that, almost surely, $\Dim \mu_Z \geq \gamma$. The definition of $\Dim_Z \mu$ yields that for $\mu$ almost every $t \in \R^n$ we have
\begin{equation} \label{eq:limi} 
\liminf_{r\to 0+} r^{-\gamma} \E(\mu_Z(B(Z(t),r)))=0.
\end{equation} 
Fix an $t\in \R^n$ for which \eqref{eq:limi} holds. 
The argument of the beginning of the proof of Lemma~\ref{l:1} yields that  
\[\liminf_{r\to 0+} r^{-\gamma} \mu_Z(B(Z(t),r))=\liminf_{r\in \QQ,\, r\to 0+} r^{-\gamma} \mu_Z(B(Z(t),r))\]
is a random variable, so Fatou's lemma implies that
\[ 
\E \left(\liminf_{r\to 0+} r^{-\gamma} \mu_Z(B(Z(t),r))\right)\leq \liminf_{r\to 0+} r^{-\gamma} \E(\mu_Z (B(Z(t),r)))=0.
\]
Therefore, almost surely, we have 
\begin{equation} \label{eq:Fub} \liminf_{r\to 0+} r^{-\gamma} \mu_Z(B(Z(t),r))=0. \end{equation}  Lemma~\ref{l:pr} implies that the set
\[\iD=\left\{(\omega,t)\in \Omega \times \R^n: \liminf_{r\to 0+} r^{-\gamma} \mu(\{s: Z(s,\omega) \in B(Z(t,\omega),r)\})=0\right\}\]
is in $\iF\otimes \iB(\R^n)$. Thus $\iD$ 
is measurable with respect to $\P\times \mu$, so \eqref{eq:Fub} and Fubini's theorem yield that, 
almost surely, for $\mu$ almost every $t\in \R^n$ we
have 
\[\liminf_{r\to 0+} r^{-\gamma} \mu_Z(B(Z(t),r))=0.\]
Hence $\Dim \mu_Z\geq \gamma$ almost surely by Theorem~\ref{t:pmu}, so \eqref{eq:muz} holds. 

Finally, we prove that, almost surely,  
\begin{equation} \label{eq:ZA}
\Dim Z(A)\geq \Dim_Z A.
\end{equation} 
Assume that $\mu\in \iP_c(A)$ is arbitrarily fixed. As $Z$ is jointly measurable, all its sample paths are Borel measurable. Thus Lemma~\ref{l:h} and \eqref{eq:muz} imply that, almost surely,  
\begin{equation} \label{eq:za} 
\Dim Z(A)\geq \Dim \mu_Z \geq \Dim_Z \mu. 
\end{equation}	
By definition we have $\Dim_Z A=\sup \{\Dim_Z \mu: \mu\in \iP_c(A)\}$, so \eqref{eq:za} implies \eqref{eq:ZA}. The proof is complete.
\end{proof}

\subsection{The upper bound} \label{ss:ub}

The aim of this subsection is to prove the following. 

\begin{theorem} \label{t:ub} Let $X=\{(X_1(t),\dots,X_d(t)): t\in \R^n\}$ be a Gaussian random field in $\R^d$ such that $X_1,\dots,X_d$ are independent, centered Gaussian random fields with almost surely continuous sample paths. Let $f\colon \R^n\to \R^d$ be a Borel map, and let $A\subset \R^n$ be an analytic set on which $X$ is regular. If $Z=X+f$ or $Z=(X+f)^*$ then, almost surely,
\begin{equation*} \Dim Z(A)\leq \Dim_Z A.
\end{equation*} 	
\end{theorem}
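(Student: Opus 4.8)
The plan is to fix a rational $\gamma>\Dim_Z A$ and prove that $\Dim Z(A)\le\gamma$ almost surely; letting $\gamma\downarrow\Dim_Z A$ then finishes. First I would carry out three reductions. Since $X$ is regular on $A$, write $A=\bigcup_k A_k$ as in Definition~\ref{d:reg}; because $Z(A)=\bigcup_k Z(A_k)$, packing dimension is countably stable, and $\Dim_Z A_k\le\Dim_Z A$, it suffices to treat a single piece. Intersecting with a countable exhaustion by compact sets and invoking Corollary~\ref{c:h2}, I may assume $A$ is compact and bounded and that for every $N$ there is a $c_N$ with $\rho_i(s,t)\le c_N\log^{-N}(1/|s-t|)$ for all $s,t\in A$ and all $i$. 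Finally, Lemma~\ref{l:h} gives $\Dim Z(A)=\sup\{\Dim\mu_Z:\mu\in\iP_c(A)\}$ (the sample paths are Borel), so the task becomes to bound $\Dim\mu_Z\le\gamma$ — and the difficulty will be to do so simultaneously for all $\mu$ on a single almost sure event.

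Next I would set up two parallel descriptions of the critical exponent. On the one hand, applying Corollary~\ref{c:gdf} pathwise to the Borel map $Z=X+f$ expresses $\Dim\mu_Z$ through the random integrals $Q_\omega(t,r)=\int I_d\!\bigl(\tfrac{Z(s)-Z(t)}{r}\bigr)\,d\mu(s)$. On the other hand, using the maximum norm (legitimate by Remark~\ref{r:1}) and independence of the $X_i$, Lemma~\ref{l:1} writes the expectation defining $\Dim_Z\mu$ as the deterministic integral $\bar Q(t,r)=\int\prod_{i=1}^d\P(|Z_i(s)-Z_i(t)|\le r)\,d\mu(s)$. The whole proof then reduces to the pointwise comparison, valid on a single $\mu$-independent almost sure event for all $s,t\in A$ and all small $r$,
\[I_d\!\Bigl(\tfrac{Z(s)-Z(t)}{r}\Bigr)\ \gtrsim\ \prod_{i=1}^d\P\bigl(|Z_i(s)-Z_i(t)|\le r\bigr),\]
because integrating it against any $\mu$ gives $Q_\omega(t,r)\gtrsim\bar Q(t,r)$, whence $\{\delta:\liminf_r r^{-\delta}Q_\omega=0\}\subseteq\{\delta:\liminf_r r^{-\delta}\bar Q=0\}$ and therefore $\Dim\mu_Z\le\Dim_Z\mu\le\Dim_Z A$ for every $\mu$ at once, which is exactly the upper bound.

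The comparison above is the crux, and it is where regularity enters. By Dudley's entropy bound together with Gaussian concentration (Borell--TIS), the slow decay $\rho_i(s,t)\le c_N\log^{-N}(1/|s-t|)$ upgrades, almost surely and uniformly on the compact set $A$, to a pathwise modulus $|X_i(s)-X_i(t)|\le C_N(\omega)\log^{-N}(1/|s-t|)$ for every $N$; this is the desired $\mu$-independent event. On it, $Z_i(s)-Z_i(t)$ equals, up to a subpolynomially small error, the deterministic increment $f_i(s)-f_i(t)$, whereas the Gaussian factor $\P(|Z_i(s)-Z_i(t)|\le r)=\P(|f_i(s)-f_i(t)+\rho_i(s,t)N|\le r)$ behaves essentially like the indicator $\mathbbm 1[\,|f_i(s)-f_i(t)|\lesssim r\,]$, since $\rho_i(s,t)$ is subpolynomially small; as $\min\{1,r/|f_i(s)-f_i(t)|\}$ dominates this indicator, the stated inequality follows by a short case analysis on the size of $|f_i(s)-f_i(t)|$ relative to $r$ and to the common subpolynomial scale. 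I expect the genuine obstacle to be the finest scales, where $|s-t|$ is super-exponentially small in $1/r$ and the pathwise modulus and the canonical metric $\rho_i$ differ by an unbounded power of $\log(1/|s-t|)$; this is precisely where one must exploit that regularity is assumed for \emph{every} $N$, choosing $N$ large enough to absorb any fixed power-of-log loss, so that the comparison survives with a harmless constant that does not perturb the critical exponent.

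For the graph case $Z=(X+f)^*$ the argument is identical once one uses the second formula of Corollary~\ref{c:gdf}, whose integration is restricted to $s\in D(t,r)$: the first $n$ coordinates of $Z$ are the identity, so both $Q_\omega(t,r)$ and $\bar Q(t,r)$ acquire the same localization $D(t,r)$ and carry the $\R^d$-valued increments of $X+f$ in their kernels, to which the comparison of the previous paragraph applies verbatim.
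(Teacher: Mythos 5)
Your overall architecture coincides with the paper's: reduce by countable stability and Lemma~\ref{l:h} to bounding $\Dim\mu_Z$ for all $\mu\in\iP_c(A)$ at once, express the pathwise side through Corollary~\ref{c:gdf} and the deterministic side through Lemma~\ref{l:1}/Corollary~\ref{c:1}, fix a good sample path, and compare the two kernels pointwise (graphs handled identically). The gap is in the crux: your pointwise inequality
\[
I_d\Bigl(\tfrac{Z(s)-Z(t)}{r}\Bigr)\ \gtrsim\ \prod_{i=1}^d\P\bigl(|Z_i(s)-Z_i(t)|\le r\bigr),
\]
claimed at the \emph{same} scale $r$ with a uniform constant on an almost sure event, is false. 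Test it with $n=d=1$, $f\equiv 0$, and $X_1$ a fractional Brownian motion of index $\alpha$ (which is regular in the sense of Definition~\ref{d:reg}): then $\rho_1(s,t)=|s-t|^{\alpha}$, and at scale $r=\rho_1(s,t)$ the right-hand side equals $\P(|N|\le 1)\approx 0.68$. By the sharpness of L\'evy's modulus of continuity, almost surely there are pairs with $|s-t|$ arbitrarily small and $|X_1(s)-X_1(t)|\ge c\,\rho_1(s,t)\sqrt{\log(1/|s-t|)}$, and for those pairs the left-hand side at $r=\rho_1(s,t)$ is $\lesssim 1/\sqrt{\log(1/|s-t|)}\to 0$; no uniform constant exists. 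The root cause is that your modulus step only yields $|X_i(s)-X_i(t)|\le C\rho_i(s,t)^{1-1/N}$ (this is exactly Lemma~\ref{l:Crho}; with entropy $\exp(u\,\eps^{-1/N})$ the power loss is genuine and cannot be improved to a constant or a logarithm), so the pathwise increment exceeds the canonical metric by an unbounded factor. Your phrase ``$\rho_i(s,t)$ is subpolynomially small'' refers to smallness in $|s-t|$, but the dangerous regime is $\rho_i(s,t)\asymp r$ with $|s-t|$ tiny: there the increment of $Z_i$ differs from $f_i(s)-f_i(t)$ by much more than $r$, the indicator heuristic breaks, and the loss for a fixed $N$ is a factor of order $r^{-1/N}$ --- a power of $r$, not a ``power-of-log'' that could be absorbed into a harmless constant by taking $N$ large.

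The repair is the paper's Lemma~\ref{l:gf}, and it requires comparing the two sides at \emph{different} scales: one proves, by a four-case analysis, that
\[
\P\bigl(\rho N\in B(a,r)\bigr)\ \lesssim\ r^{\beta}\,(a+\rho^{\beta})^{-1},
\]
i.e.\ the deterministic probability at scale $r$ is dominated by the pathwise kernel at the larger scale $r^{\beta}$, where $\beta<1$ is the H\"older exponent relative to $\rho_i$ supplied by Lemma~\ref{l:Crho}. The scale change is then converted into the exponent factor $\beta$ via the scaling identity (from Corollary~\ref{c:gdf})
\[
\beta\,\Dim \mu_{h}=\sup\Bigl\{\gamma: \liminf_{r\to 0+} r^{-\gamma} \int_{\R^n} I^{h}_{d}(t,s,r^{\beta}) \, \mathrm{d} \mu(s)=0 \textrm{ for $\mu$-a.e.\ } t\Bigr\},
\]
giving $\beta\,\Dim\mu_Z\le\Dim_Z\mu$ simultaneously for all $\mu\in\iP_c(A)$, and one concludes by letting $\beta\to 1$. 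This limit in the exponent is precisely the step your fixed-constant absorption cannot replicate: for each fixed $N$ the unavoidable loss shifts the critical exponent by roughly $1/N$, and it must be removed by a limiting argument in the exponent rather than by enlarging constants.
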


Theorems~\ref{t:lbb} and \ref{t:ub} will clearly imply our Main Theorem. First we need some preparation. Let $X$ be a Gaussian random field according to Theorem~\ref{t:ub}. For all $1\leq i\leq d$ let $\rho_i$ be the canonical pseudometric for $X_i$, that is, for all $t,s\in \R^n$ let 
\[\rho_i(t,s)=\sqrt{\E (X_i(t)-X_i(s))^2}.\]
Let $f\colon \R^n\to \R^d$ be a Borel map and let $A\subset \R^n$ be an analytic set on which $X$ is regular. For $\mu\in \iP_c(A)$ we will use the notation 
\begin{equation*} 
\Dim_{X,f} \mu=\Dim_{(X+f)} \mu \quad \textrm{and} \quad    \Dim_{X,f}^{*} \mu=\Dim_{(X+f)^{*}} \mu
\end{equation*}
throughout the subsection. Endow $\R^d$ and $\R^{n+d}$ with the maximum norm $||\cdot||$. 
Recall that $N$ denotes a standard normal random variable.
\begin{definition} Since $X_i(t)-X_i(s)$ has distribution $\rho_i(t,s)N$ and $X_1,\dots,X_d$ are independent, for all $0<\beta\leq 1$ and for all $t,s\in \R^n$ and $r>0$ we can define 
\begin{align} \label{eq:F}
\begin{split}
H^{f}_{X}(t,s,r)&=\P(||(X+f)(t)-(X+f)(s)||\leq r) \\
&=\prod_{i=1}^d \P\left(\rho_i(t,s)N\in B(f_i(t)-f_i(s), r)\right),
\end{split} 
\end{align}
where $f_i$ denotes the $i$th coordinate function of $f$.
\end{definition}

Lemma~\ref{l:1} and Remark~\ref{r:1} yield the following corollary.

\begin{corollary} \label{c:1} We have
\begin{align*}\Dim_{X,f} \mu&=\sup\left\{ \gamma: \liminf_{r\to 0+} r^{-\gamma} \int_{\R^n} H^{f}_{X} (t,s,r) \, \mathrm{d} \mu(s)=0 \textrm{ for $\mu$-a.e.}\ t\in \R^n\right\}, \\
\Dim^*_{X,f} \mu&=\sup\left\{ \gamma: \liminf_{r\to 0+} r^{-\gamma} \int_{D(t,r)} H^{f}_{X} (t,s,r) \, \mathrm{d} \mu(s)=0 \textrm{ for $\mu$-a.e.}\ t\in \R^n \right\}.
\end{align*}
\end{corollary}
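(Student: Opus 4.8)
The plan is to unwind both equalities directly from Definition~\ref{d:Z}, converting the expectation into an integral by Lemma~\ref{l:1} and invoking Remark~\ref{r:1} so that all balls may be taken in the maximum norm $||\cdot||$ on $\R^d$ and $\R^{n+d}$ rather than the Euclidean norm appearing in Definition~\ref{d:Z}.

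First I would dispose of the image case $Z=X+f$. Since $\Dim_{X,f}\mu=\Dim_{X+f}\mu$ by definition, it suffices to check that, for every $t\in\R^n$ and $r>0$,
\[\E(\mu_Z(B(Z(t),r)))=\int_{\R^n} H^f_X(t,s,r)\,\mathrm{d}\mu(s).\]
This is immediate from Lemma~\ref{l:1} applied in the maximum norm: its integrand $\P(||Z(s)-Z(t)||\leq r)=\P(||(X+f)(s)-(X+f)(t)||\leq r)$ equals $H^f_X(t,s,r)$ as defined in \eqref{eq:F}, because $||\cdot||$ is symmetric under negation. Substituting this identity into Definition~\ref{d:Z} produces the first displayed formula of the corollary.

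Next I would treat the graph case $Z=(X+f)^{*}$, where the single extra ingredient is the product structure of the maximum norm on $\R^{n+d}=\R^n\times\R^d$. As $Z(t)=(t,(X+f)(t))$, we have $Z(s)-Z(t)=(s-t,(X+f)(s)-(X+f)(t))$, so $||Z(s)-Z(t)||\leq r$ holds precisely when both $||s-t||\leq r$ and $||(X+f)(s)-(X+f)(t)||\leq r$; the first of these conditions is exactly $s\in D(t,r)$. Hence
\[\P(||Z(s)-Z(t)||\leq r)=\mathbbm{1}_{D(t,r)}(s)\,H^f_X(t,s,r),\]
and Lemma~\ref{l:1} gives $\E(\mu_Z(B(Z(t),r)))=\int_{D(t,r)} H^f_X(t,s,r)\,\mathrm{d}\mu(s)$. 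Feeding this into Definition~\ref{d:Z} yields the second formula.

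There is no genuinely hard step here; the corollary is a direct computation once Lemma~\ref{l:1} and Remark~\ref{r:1} are available. The one point requiring care is the graph case, where one must correctly observe that the maximum norm on the product $\R^n\times\R^d$ decouples into the two coordinate-block inequalities, so that the $\R^n$-component condition restricts the domain of integration from all of $\R^n$ down to $D(t,r)$. The appeal to Remark~\ref{r:1} is what legitimizes carrying out the whole argument in the maximum norm.
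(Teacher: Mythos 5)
Your proposal is correct and follows exactly the route the paper intends: the paper derives Corollary~\ref{c:1} in one line from Lemma~\ref{l:1} and Remark~\ref{r:1}, and your argument is precisely the spelling-out of that derivation, including the key observation in the graph case that the maximum norm on $\R^{n+d}=\R^n\times\R^d$ decouples $||Z(s)-Z(t)||\leq r$ into $s\in D(t,r)$ and $||(X+f)(s)-(X+f)(t)||\leq r$, the former condition being deterministic and hence factoring out as an indicator restricting the domain of integration.
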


\begin{lemma} \label{l:gf} Let 
$g\colon \R^n \to \R^d$ be a map such that for each $0<\beta<1$ there is a $C=C(\beta)\in \R^+$ such that for all $1\leq i\leq d$ and $t,s\in A$ we have 
\begin{equation*} 
|g_i(t)-g_i(s)|\leq C\rho_i(t,s)^{\beta} 
\end{equation*} 
Then for all $\mu\in \iP_c(A)$ we have 
\[\Dim \mu_{(g+f)}\leq \Dim_{X,f} \mu \quad \textrm{and} \quad \Dim \mu_{(g+f)^{*}}\leq \Dim^{*}_{X,f} \mu.\]
\end{lemma}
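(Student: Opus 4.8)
The plan is to compare two integral functionals and transfer a $\liminf$ between them. Working throughout with the maximum norm on $\R^d$ and $\R^{n+d}$, so that every kernel factors over the $d$ coordinates, Corollary~\ref{c:gdf} expresses $\Dim\mu_{(g+f)}$ through $S_1(t,r)=\int_{\R^n} I_d^{g+f}(t,s,r)\,d\mu(s)$ and Corollary~\ref{c:1} expresses $\Dim_{X,f}\mu$ through $S_2(t,r)=\int_{\R^n} H_X^f(t,s,r)\,d\mu(s)$; in the graph case both integrals are merely restricted to $s\in D(t,r)$, so I will prove the image statement and carry this restriction along unchanged. It therefore suffices to show, for every $\gamma<\Dim\mu_{(g+f)}$ and $\mu$-almost every $t$, that $\liminf_{r\to0+}r^{-\gamma}S_1(t,r)=0$ implies $\liminf_{r\to0+}r^{-\gamma}S_2(t,r)=0$.

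The first step is clean and inserts $g$ into the Gaussian side. Writing $b_i=(g+f)_i(s)-(g+f)_i(t)$, I would prove the factorwise pointwise bound
\[H_X^{g+f}(t,s,r)\lesssim_d I_d^{g+f}(t,s,2r).\]
Indeed the $i$-th Gaussian factor $\P(\rho_i(t,s)N\in B(-b_i,r))$ is at most $1$, which settles $|b_i|\le 2r$; and for $|b_i|>2r$ the defining interval lies at distance at least $|b_i|/2$ from the origin, so $\P(\rho_i(t,s)N\in B(-b_i,r))\le \frac{2r}{\sqrt{2\pi}\,\rho_i(t,s)}\exp\!\big(-b_i^2/(8\rho_i(t,s)^2)\big)\lesssim r/|b_i|$, using that $u\mapsto u e^{-u^2/8}$ is bounded. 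Taking the product over the $d$ coordinates and absorbing the rescaling $r\mapsto 2r$ into the $\liminf$ yields $\Dim\mu_{(g+f)}\le \Dim_{X+g+f}\mu$. This reduces the lemma to the drift-removal inequality $\Dim_{X+g+f}\mu\le \Dim_{X,f}\mu$, that is, to transferring the $\liminf$ from $\int H_X^{g+f}(t,s,r)\,d\mu(s)$ to $\int H_X^f(t,s,r)\,d\mu(s)$.

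The drift-removal step is where the full force of the hypothesis $|g_i(t)-g_i(s)|\le C(\beta)\rho_i(t,s)^\beta$ for every $\beta<1$ is needed, and it is the heart of the argument. A factorwise bound $H_X^f(t,s,r)\lesssim H_X^{g+f}(t,s,cr)$ is simply false: on a coordinate where $|f_i(t)-f_i(s)|$ sits near the Gaussian mode while $|g_i(t)-g_i(s)|\gg\rho_i(t,s)$ — permitted by the hypothesis, since $\rho_i^\beta\gg\rho_i$ at small scales — the $f$-factor has order $\min\{1,r/\rho_i(t,s)\}$ whereas the $(g+f)$-factor is exponentially small. Accordingly I would not seek a pointwise comparison but instead fix $\gamma$, take $\beta$ close to $1$, and decompose the $s$-integral dyadically according to the intrinsic scales $\rho_i(t,s)$. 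On each scale band the Hölder bound pins $|g_i(t)-g_i(s)|$ to $\rho_i(t,s)^\beta$, so the mismatch between the two kernels is measured by the single exponent $1-\beta$, which can be made arbitrarily small; a Talagrand--Xiao-type covering and counting estimate for the product kernel $I_d$ (in the spirit of \cite{TX}) then bounds the contribution of the offending bands, while the $\liminf$ hypothesis selects a sequence $r_k\to0$ along which the remaining bands already drive $r_k^{-\gamma}\int H_X^f(t,s,r_k)\,d\mu(s)$ to $0$; summing a geometric series over bands closes the estimate.

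The main obstacle is exactly this drift-removal step, and within it the regime of coordinates on which the deterministic oscillation of $g$ dominates the canonical metric $\rho_i$. Everything else — the reduction through Corollaries~\ref{c:1} and~\ref{c:gdf}, the coordinatewise factorization afforded by the maximum norm, and the clean insertion of $g$ — is routine. The delicate content is to show that, although these coordinates make the deterministic kernel far smaller than the Gaussian one at any fixed scale, their aggregate effect is negligible after optimizing $\beta\to1$ and choosing favorable scales through the $\liminf$; this is precisely the analytic expression of the Talagrand--Xiao phenomenon that a drift which is H\"older of every order below the critical one cannot push the packing dimension above the profile value.
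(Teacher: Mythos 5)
Your first step is sound: the factorwise bound $\P(\rho_i N\in B(-b_i,r))\lesssim\min\{1,r/|b_i|\}$ is correct and gives $H^{g+f}_X(t,s,r)\lesssim_d I_d^{g+f}(t,s,2r)$, hence $\Dim\mu_{(g+f)}\leq \Dim_{X,g+f}\,\mu$. But this reduces the lemma to the drift-removal inequality $\Dim_{X,g+f}\,\mu\leq \Dim_{X,f}\,\mu$, and that step — which you yourself identify as ``the heart of the argument'' — is never actually proved. The dyadic decomposition into bands, the appeal to ``a Talagrand--Xiao-type covering and counting estimate for the product kernel $I_d$ (in the spirit of \cite{TX})'', and the closing geometric series are a program, not an argument; no such estimate exists in \cite{TX} to quote, and the logical structure is genuinely problematic. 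Your hypothesis is smallness of $\liminf_{r\to 0+}r^{-\gamma}\int H^{g+f}_X(t,s,r)\,\mathrm{d}\mu(s)$ along some sequence, but on exactly the ``offending'' bands you describe (where $|g_i(t)-g_i(s)|\approx\rho_i(t,s)^{\beta}\gg\rho_i(t,s)$ and $f_i(t)-f_i(s)$ sits near the mode) the kernel $H^{g+f}_X$ is exponentially small \emph{regardless} of the $\mu$-mass of the band, so the hypothesis carries no information about that mass, while $H^f_X$ there can be as large as $\min\{1,r/\rho_i(t,s)\}$. Controlling those bands is the entire content of the lemma, and your sketch leaves it open; as written, the proof has a genuine gap.

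The paper closes this gap by never comparing the two Gaussian kernels at all. For fixed $0<\beta<1$ it proves the single pointwise inequality $H^f_X(t,s,r)\lesssim_{d,C} I_d^{g+f}(t,s,r^{\beta})$ for $t,s\in A$ and small $r$, reduced coordinatewise (with $\rho=\rho_i(t,s)$, $a=|f_i(t)-f_i(s)|$, and $|g_i(t)-g_i(s)|\leq C\rho^{\beta}$) to
\begin{equation*}
\P\left(\rho N\in B(a,r)\right)\lesssim r^{\beta}\left(a+\rho^{\beta}\right)^{-1},
\end{equation*}
which follows from $\varphi\leq 1$ and $\varphi(z)\lesssim|z|^{-1}$ in four elementary cases. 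The device you were missing is the scale inflation $r\mapsto r^{\beta}$ applied to the \emph{deterministic} side: on your offending coordinates ($a\leq\rho^{\beta}$, $r\leq\rho$) one has $r^{\beta}(a+\rho^{\beta})^{-1}\gtrsim(r/\rho)^{\beta}\geq r/\rho\gtrsim\P(\rho N\in B(a,r))$, so the regime that defeats any Gaussian-versus-Gaussian comparison is harmless for Gaussian-versus-deterministic. Via Corollaries~\ref{c:1} and~\ref{c:gdf} (the graph case needs the nesting $D(t,r)\subset D(t,r^{\beta})$, so the domain restriction is not carried ``unchanged'' but in the favorable direction) this yields $\beta\Dim\mu_{(g+f)}\leq\Dim_{X,f}\,\mu$ and $\beta\Dim\mu_{(g+f)^{*}}\leq\Dim^{*}_{X,f}\,\mu$, and letting $\beta\to 1$ finishes the proof — no band decomposition, counting lemma, or subsequence extraction is required, and your step one becomes superfluous. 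If you want to salvage your route, you would in effect have to reprove this pointwise inequality inside your band analysis.
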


\begin{proof}
Let $\mu\in \iP_c(A)$ and fix an arbitrary $0<\beta<1$. It is enough to prove that   
\begin{equation} \label{eq:gf}
\beta \Dim \mu_{(g+f)}\leq \Dim_{X,f} \mu \quad \textrm{and} \quad \beta \Dim \mu_{(g+f)^{*}}\leq \Dim^{*}_{X,f} \mu.
\end{equation}  	
Corollary~\ref{c:gdf} and scaling imply that $h=g+f$ satisfies 
\begin{align*}  
\beta \Dim \mu_{h}&=\sup\left\{\gamma: \liminf_{r\to 0+} r^{-\gamma} \int_{\R^n} I^{h}_{d}(t,s,r^{\beta}) \, \mathrm{d} \mu(s)=0 \textrm{ for $\mu$-a.e.}\ t\right\}, \\
\beta \Dim \mu_{h^*}&=\sup\left\{\gamma: \liminf_{r\to 0+} r^{-\gamma} \int_{D(t,r^{\beta})} I^{h}_{d}(t,s,r^{\beta}) \, \mathrm{d} \mu(s)=0 \textrm{ for $\mu$-a.e.}\ t \right\}.
\end{align*} 
Let $c=2^{-(1-\beta)^{-1}}\in (0,1)$ and assume that $r\in (0,c)$. Corollary~\ref{c:1}, the above formulas, and $r<r^{\beta}$ imply that in order to prove \eqref{eq:gf} it is enough to show that for all $t,s\in A$ and $r\in (0,c)$ we have 
\[H^{f}_{X} (t,s,r) \lesssim_{d,C} I_{d}^{g+f}(t,s,r^{\beta}).\]
Fix $i\in \{1,\dots,d\}$, it is enough to prove that
\[\P(\rho_i(t,s)N \in B(f_i(t)-f_i(s),r))\lesssim_{C} \min\left\{ 1,r^{\beta}|(f+g)_i(t)-(f+g)_i(s)|^{-1}\right\}.\]
For the sake of notational simplicity let $\rho=\rho_i(t,s)$ and $a=|f_i(t)-f_i(s)|$. By the symmetry of $N$ it is enough to prove that 
\begin{equation} \label{eq:ar} \P(\rho N\in B(a,r))\lesssim r^{\beta}(a+\rho^{\beta})^{-1}.
\end{equation} 
First assume that $\rho=0$. If $a\leq r$ then the right hand side of the above equation is larger than $1$, while if $a>r$ then the left hand side is $0$. Thus \eqref{eq:ar} holds with $\leq $ instead of $\lesssim$.  

Now we may assume that $\rho\neq 0$. We will consider four cases. 

\noindent \textbf{Case I.} If $\rho^{\beta}\leq a\leq r^{\beta}$ then
\[r^{\beta}|a+\rho^{\beta}|^{-1}
\geq r^{\beta}/(2a)\geq 1/2,\]
and we are done since the left hand side of 
\eqref{eq:ar} is at most one.  

\bigskip

\noindent \textbf{Case II.} If $a\leq \rho^{\beta}$ and $\rho \leq r$ then 
\[r^{\beta}|a+\rho^{\beta}|^{-1}
\geq r^{\beta}/(2\rho^{\beta})\geq 1/2,\]
and we are done as above. 

\bigskip 

\noindent \textbf{Case III.}
If $a\leq \rho^{\beta}$ and $r \leq \rho$ then using that $\varphi(z)\leq 1$ we have 
\begin{align*}
\P(\rho N\in B(a,r))&=\int_{B(a/\rho,r/\rho)} \varphi(z) \, \mathrm{d} z\\
&\leq 2(r/\rho)\leq 2(r/\rho)^{\beta} \\
&\leq 4 r^{\beta}(a+\rho^{\beta})^{-1},
\end{align*} 
and \eqref{eq:ar} follows.

\bigskip 

\noindent \textbf{Case IV.} Assume that $a\geq \max\{ \rho^{\beta}, r^{\beta}\}$. Then 
using $\varphi(z)\lesssim |z|^{-1}$, that $\varphi$ is decreasing on $[0,\infty)$, and $r<c$ we obtain that  
\begin{align*}
\P(\rho N\in B(a,r))&=\int_{B(a/\rho,r/\rho)} \varphi(z) \, \mathrm{d} z\\
&\leq 2(r/\rho) \varphi((a-r)/\rho)\\
&\lesssim r/(a-r) \\
&\leq r^{\beta}/a \\
&\leq  2r^{\beta}(a+\rho^{\beta})^{-1},
\end{align*} 
hence \eqref{eq:ar} holds. The proof is complete.
\end{proof}

For the following lemma  see the proof of \cite[Theorem~1.3.5]{AT}, where it is enough to assume that $(T,\rho)$ is totally bounded. The original source is Dudley~\cite{D}.

\begin{lemma} \label{l:Crho} 
Let $\{G(t): t\in T\}$ be a centered real-valued Gaussian random field with almost surely continuous sample paths defined on a bounded set $T\subset \R^n$. Assume that there exist constants $\alpha, u \in \R^+$ such that the canonical pseudometric $\rho$ satisfies $N_{\rho}(T,r)\leq \exp(u r^{-\alpha})$ for all $r>0$. Then, almost surely, there is a finite constant $C=C(\alpha, u)$ such that for all $t,s\in T$ we have 
\begin{equation*}  |G(t)-G(s)|\leq C\rho(t,s)^{1-\alpha}.
\end{equation*}
\end{lemma}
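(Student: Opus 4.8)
The plan is to reduce the statement to the classical Dudley chaining bound, which is exactly the content of the proof of \cite[Theorem~1.3.5]{AT} (see also \cite{D}), and then to carry out an elementary estimate on the resulting entropy integral. The chaining argument applied to $\{G(t):t\in T\}$ produces, almost surely, a finite random constant $K$ such that
\[|G(t)-G(s)|\leq K\int_0^{\rho(t,s)} \sqrt{\log N_{\rho}(T,\eps)}\,\mathrm{d}\eps \qquad \textrm{for all } t,s\in T.\]
The hypotheses that $T$ is (totally) bounded and that the sample paths are almost surely continuous are precisely what guarantee that this modulus-of-continuity bound holds pathwise and uniformly over all pairs with a single almost surely finite constant. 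Granting this, it only remains to dominate the Dudley integral by a multiple of $\rho(t,s)^{1-\alpha}$.

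Feeding in the entropy hypothesis $N_{\rho}(T,\eps)\leq \exp(u\eps^{-\alpha})$ gives $\sqrt{\log N_{\rho}(T,\eps)}\leq \sqrt{u}\,\eps^{-\alpha/2}$, so for every $\delta>0$,
\[\int_0^{\delta} \sqrt{\log N_{\rho}(T,\eps)}\,\mathrm{d}\eps \leq \sqrt{u}\int_0^{\delta} \eps^{-\alpha/2}\,\mathrm{d}\eps=\frac{\sqrt{u}}{1-\alpha/2}\,\delta^{1-\alpha/2},\]
the integral converging because $\alpha<2$. This already yields a sharper modulus (exponent $1-\alpha/2$) than is asked for; to recover the stated exponent $1-\alpha$ I would invoke that $T$ has finite $\rho$-diameter. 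Indeed, the entropy bound forces $N_{\rho}(T,r)=1$ as soon as $\exp(ur^{-\alpha})<2$, whence $D:=\diam_{\rho} T\leq 2(u/\log 2)^{1/\alpha}<\infty$. Since $\rho(t,s)\leq D$ and $\alpha/2>0$, writing $\delta=\rho(t,s)$ one has $\delta^{1-\alpha/2}=\delta^{\alpha/2}\,\delta^{1-\alpha}\leq D^{\alpha/2}\,\delta^{1-\alpha}$. Combining the three displays yields $|G(t)-G(s)|\leq C\rho(t,s)^{1-\alpha}$ for all $t,s\in T$, with $C=C(\alpha,u)=K\sqrt{u}\,D^{\alpha/2}/(1-\alpha/2)$, as claimed.

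The only substantive input is the chaining bound itself, which I treat as quoted from \cite{AT,D}; everything after it is the elementary computation above. Consequently the point requiring care is the first paragraph: applying the cited theorem in its pathwise, uniform-over-$T$ form, so that one random $K$ works simultaneously for every pair $(t,s)$, rather than merely in expectation or only in the regime $\rho(t,s)\to 0$. If one prefers to avoid quoting the uniform form, an equivalent route is to take the standard local modulus (valid for $\rho(t,s)\leq\delta_0$) together with the almost sure finiteness of $\sup_{t\in T}|G(t)|$, and then to absorb the pairs with $\rho(t,s)\in[\delta_0,D]$ into the constant, using that $\delta^{1-\alpha}$ is bounded below on that compact range; this is the main place where boundedness of $T$ and sample-path continuity enter.
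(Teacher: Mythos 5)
Your proposal is correct and is essentially the paper's own proof: the paper establishes this lemma purely by citing the chaining argument behind Dudley's entropy bound (the proof of Theorem~1.3.5 in Adler--Taylor, with compactness relaxed to total boundedness), and you invoke exactly that input and then carry out the routine conversion, namely $\sqrt{u}\int_0^{\delta}\eps^{-\alpha/2}\,\mathrm{d}\eps \lesssim_{\alpha} \delta^{1-\alpha/2}\leq D^{\alpha/2}\,\delta^{1-\alpha}$ with $D=\diam_{\rho}T\leq 2(u/\log 2)^{1/\alpha}$, together with the correct absorption of pairs with $\rho(t,s)$ bounded away from zero (where the almost sure finiteness of $\sup_{t\in T}|G(t)|$, itself a consequence of Dudley's expectation bound, is used). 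The one caveat, which your write-up shares with the paper's citation and which is harmless because the paper only applies the lemma with $\alpha=1-\beta\in(0,1)$, is that the entropy integral converges only when $\alpha<2$, while the lemma is stated for all $\alpha\in\R^+$.
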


Now we are ready to prove Theorem~\ref{t:ub}. 

\begin{proof}[Proof of Theorem~\ref{t:ub}] 
Fix an arbitrary $0<\beta<1$. First assume that $A$ is bounded and for all $N\in \N^+$ there exists a $c\in \R^+$ such that for all $1\leq i\leq d$ we have 
\begin{equation} \label{eq:hhh} 
\rho_i(t,s)\leq c\log^{-N} (1/|t-s|) \quad \textrm{for all} \quad t,s\in A.
\end{equation}
Fix $N\in \N^+$ with $1/N<1-\beta$. Easy computation shows that there is a constant $u=u(A,n, N, c)$ such that for all $i$ and $r>0$ we have 
\[N_{\rho_i}(A,r)\leq \exp(ur^{-1/N})\leq \exp(ur^{\beta-1}).\] 
Applying Lemma~\ref{l:Crho} for $X_1,\dots,X_d$ implies that we can fix an almost sure sample path $g=X$ and a constant $C=C(g,u,\beta)\in \R^+$ such that for all $1\leq i\leq d$ and $t,s\in A$ we have 
\[|g_i(t)-g_i(s)|\leq  C\rho_i(t,s)^{\beta},\] 
so the condition of Lemma~\ref{l:gf} holds. Let $h=g+f$.  It is enough to prove that 
\begin{equation} \label{eq:Dh}
\Dim h(A)\leq \Dim_{(X+f)} A \quad \textrm{and} \quad \Dim h^{*}(A)\leq \Dim_{(X+f)^{*}} A. 
\end{equation} 	
As $h$ and $h^{*}$ are Borel maps, Lemma~\ref{l:h}, Lemma~\ref{l:gf}, and Definition~\ref{d:Z} yield that 
\begin{align*}
\Dim h(A)&=\sup\{\Dim \mu_h: \mu\in \iP_c(A)\}\leq \Dim_{X,f} \mu \leq \Dim_{(X+f)} A, \\ 
\Dim h^{*}(A)&=\sup\{\Dim \mu_{h^*}: \mu\in \iP_c(A)\}\leq \Dim^*_{X,f} \mu \leq \Dim_{(X+f)^{*}} A.
\end{align*}
Hence \eqref{eq:Dh} holds. For the general case assume that $A= \bigcup_{k=1}^{\infty} A_k$ and for all $k,N\in \N^+$ there is a $c\in \R^+$ such that for all $1\leq i\leq d$ we have 
\begin{equation*}
\rho_i(t,s)\leq c\log^{-N}\left(1/|t-s|\right)\quad \textrm{for all} \quad  t,s\in A_k. 
\end{equation*}  
We may assume that all $A_k$ are bounded, otherwise we can write them as a union of countably many bounded sets. Then our initial assumption holds for each $A_k$. Thus the countable stability of packing dimension, our theorem for the sets $A_k$, and Definition~\ref{d:Z} imply that, almost surely,
\[\Dim Z(A)=\sup_{k} \Dim Z(A_k) \leq \sup_{k} \Dim_Z A_k\leq \Dim_Z A.\]
The proof is complete. 
\end{proof}

\section{The proof of Theorem~\ref{t:Gd}} \label{s:tech} 

Let us recall Definition~\ref{d:gdm} and Theorem~\ref{t:Gd}, 

\begin{dddf}
Let $n\in \N$ and let $d\in \N^+$.
Let $\mu$ be a Borel probability measure on $\R^{n+d}$. For $x=(u,v)\in \R^n\times \R^d$ and $r>0$ let $\mu_{u,r}$ denote the Borel measure on $\R^d$ such that for every Borel set $E\subset \R^d$ we have
\begin{equation*} \mu_{u,r}(E)=\mu(D(u,r)\times E).
\end{equation*}
	Let us define
	\begin{equation*}
	G_{d}^{\mu}(x,r)=\int_{\R^d} I_d\left( \frac{y-v}{r}\right) \, \mathrm{d} \mu_{u,r}(y).
	\end{equation*}
\end{dddf} 

\begin{mmmt}  
	Let $\mu\in \iP(\R^{n+d})$ for some $n\in \N$ and $d\in \N^+$. Then
	\[ \Dim \mu=\sup\left\{\gamma: \liminf_{r\to 0+} r^{-\gamma} G_{d}^{\mu}(x,r)=0 \textrm{ for $\mu$-a.e.}\ x\in \R^{n+d}\right\}.
	\]
\end{mmmt}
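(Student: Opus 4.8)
Write $\gamma^{*}$ for the supremum on the right-hand side of Theorem~\ref{t:Gd}. The plan is to establish the two inequalities $\gamma^{*}\le\Dim\mu$ and $\gamma^{*}\ge\Dim\mu$ separately. Both quantities depend only on the local behaviour of $\mu$, so by a standard restriction argument I would first reduce to compactly supported $\mu$; the countable stability of packing dimension then lets me cut $\supp\mu$ into bounded pieces whenever convenient.

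For the inequality $\gamma^{*}\le\Dim\mu$ I would use the pointwise bound $I_{d}(z)\ge\mathbbm{1}\{\|z\|\le 1\}$, which holds because $\min\{1,|z_{i}|^{-1}\}=1$ whenever $|z_{i}|\le 1$. Unfolding Definition~\ref{d:gdm}, with $x=(u,v)$ and $y$ split accordingly, this gives
\[ G_{d}^{\mu}(x,r)\ \ge\ \int_{\R^{n+d}}\mathbbm{1}\{\|y-x\|\le r\}\,\mathrm{d}\mu(y)=\mu\big(D(x,r)\big), \]
where $D(x,r)$ is the maximum-norm ball in $\R^{n+d}$. Hence $\liminf_{r\to 0+}r^{-\gamma}G_{d}^{\mu}(x,r)=0$ forces $\liminf_{r\to 0+}r^{-\gamma}\mu(D(x,r))=0$, and since Theorem~\ref{t:pmu} is insensitive to the choice of norm (balls in different norms being comparable) this yields $\gamma^{*}\le\Dim\mu$.

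The substance is the reverse inequality $\gamma^{*}\ge\Dim\mu$, which does \emph{not} follow from Theorem~\ref{t:FH}: already when $n=0$ one has $F_{d}^{\mu}\le G_{d}^{\mu}$, so the Falconer--Howroyd kernel only reproves $\gamma^{*}\le\Dim\mu$, and $G_{d}^{\mu}$ can be polynomially larger than $F_{d}^{\mu}$ (e.g.\ when $y-v$ lies near a coordinate axis, where all but one factor $\min\{1,|z_{i}|^{-1}\}$ equal $1$). The extra mass that $G_{d}^{\mu}$ collects along the coordinate subspaces must therefore be shown to be negligible along a subsequence of scales. My plan is to run the box-counting arguments of \cite{FH,FM} in an anisotropic form. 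The starting point is a dyadic decomposition of the kernel: writing, for $\vec{k}=(k_{1},\dots,k_{d})\in\N^{d}$, the shell $S_{\vec{k}}(x,r)=\{y:\ |y_{i}-v_{i}|\asymp 2^{k_{i}}r\ \text{for each }i\}$ and the anisotropic box $Q_{\vec{k}}(x,r)=D(u,r)\times S_{\vec{k}}(x,r)$, the estimate $\min\{1,s^{-1}\}\asymp 2^{-k}$ for $s\asymp 2^{k}$ (applied to $s=|y_{i}-v_{i}|/r$) gives
\[ G_{d}^{\mu}(x,r)\ \asymp\ \sum_{\vec{k}\in\N^{d}}2^{-(k_{1}+\dots+k_{d})}\,\mu\big(Q_{\vec{k}}(x,r)\big). \]
Thus $G_{d}^{\mu}(x,r)$ is a geometrically weighted sum of $\mu$-masses of boxes that have side $r$ in the $\R^{n}$-directions and side $\asymp 2^{k_{i}}r$ in the $i$-th $\R^{d}$-direction.

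To pass from this scale-$r$ quantity to the $\mu$-a.e.\ statement I would follow the potential-theoretic scheme of \cite{FH}. Integrating the last display against $\mu$ turns the energy $\int_{\R^{n+d}}r^{-\gamma}G_{d}^{\mu}(x,r)\,\mathrm{d}\mu(x)$ into a geometrically weighted anisotropic box count of $\supp\mu$ at scale $r$; a maximal/covering argument as in \cite{FH,FM} then identifies $\gamma^{*}$ with the decomposition-infimum $\inf\{\sup_{i}\overline{\dim}^{\,w}_{M}E_{i}:\ \mu(\bigcup_{i}E_{i})>0\}$ of the associated upper weighted box-counting dimension $\overline{\dim}^{\,w}_{M}$. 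Granting this, the proof is completed by a collapse lemma asserting $\overline{\dim}^{\,w}_{M}E=\overline{\dim}_{M}E$ for every bounded $E\subset\R^{n+d}$, since the decomposition-infimum then becomes exactly $\Dim\mu=\inf\{\sup_{i}\overline{\dim}_{M}E_{i}:\ \mu(\bigcup_{i}E_{i})>0\}$ (immediate from the definitions of Subsection~\ref{ss:pack}). The collapse lemma is the main obstacle: one must show that the weights $2^{-(k_{1}+\dots+k_{d})}$ defeat the growth of the masses $\mu(Q_{\vec{k}})$ in each $k_{i}$, so that the large but heavily down-weighted boxes inflate the count by at most a factor $(\log(1/r))^{d}$ and hence do not raise the exponent. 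This is precisely what happens for Lebesgue measure on a cube, where a direct computation gives $G_{d}^{\mu}(x,r)\asymp r^{n+d}(\log(1/r))^{d}$, recovering $\overline{\dim}_{M}=n+d$. The anisotropy of the boxes together with the product over the $d$ coordinates is exactly what blocks a direct appeal to Theorem~\ref{t:FH} and makes this collapse the technical heart of the argument.
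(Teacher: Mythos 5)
Your easy direction and your dyadic decomposition are both fine: $I_d(z)\ge \mathbbm{1}\{\|z\|\le 1\}$ gives $G_d^{\mu}(x,r)\ge \mu(D(x,r))$, which combined with Theorem~\ref{t:pmu} is exactly the paper's proof that $\Dim\mu$ dominates the supremum, and the shell decomposition $G_d^{\mu}(x,r)\asymp \sum_{\vec k} 2^{-(k_1+\cdots+k_d)}\mu(Q_{\vec k}(x,r))$ is a perfectly workable substitute for the paper's multidimensional integration by parts (Theorem~\ref{t:parts}). The genuine gap is in the reverse inequality, where your whole argument rests on the ``collapse lemma'' together with the claim that the weights $2^{-(k_1+\cdots+k_d)}$ defeat the growth of the masses $\mu(Q_{\vec k})$, so that the large boxes inflate $G_d^{\mu}$ over $\mu(D(x,r))$ by at most a factor $(\log(1/r))^{d}$. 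That claim is wrong for general measures, and Lebesgue measure is precisely the misleading borderline case: its mass grows with exponent exactly $1$ per coordinate, matching the kernel decay, and only in this critical situation does the weighted sum degenerate into logarithms.

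For a general $\mu$, the strongest bound that holds $\mu$-almost everywhere is of Falconer--Mattila type; the paper proves the anisotropic version (Lemmas~\ref{l:app1}, \ref{l:app2} and Corollary~\ref{c:main}): for every $\eps>0$ and $0<a<1$, for $\mu$-a.e.\ $x$ and all small $r$,
\[
\mu\bigl(D(x,h_1,\dots,h_d)\bigr)\ \le\ \mu\bigl(D(x,r)\bigr)\,\prod_{i=1}^{d}(4h_i/r)^{1+\eps}
\qquad\text{whenever } h_i\ge r^{a},
\]
i.e.\ growth exponent $1+\eps$ per coordinate, \emph{strictly larger} than the decay exponent $1$ of the weights; no exponent $\le 1$ is available in general. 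Feeding this into your decomposition yields $\sum_{\vec k}2^{\eps(k_1+\cdots+k_d)}\mu(D(x,r))$, which is dominated by its top terms and produces a polynomial factor $r^{-c}$, not a logarithm. So the collapse lemma cannot be proved by the mechanism you describe, and as an exact equality of dimensions it is essentially as strong as the theorem itself (note also that your intermediate identification of $\gamma^{*}$ with a decomposition-infimum of a weighted box dimension is a Hu--Taylor-type statement that you leave unproved). What rescues Theorem~\ref{t:Gd} is that its liminf/supremum formulation tolerates losses of arbitrarily small powers of $r$, and the paper's proof is built around exactly this: fix $\alpha<\beta$, a point $x$ with $\liminf_{r\to 0+}r^{-\beta}\mu(D(x,r))=0$ satisfying Corollary~\ref{c:main}, and a scale $r$ with $\mu(D(x,r))<r^{\beta}$; split the integral at the crossover $h_1\cdots h_d=s$, $s=r^{d}\mu(D(x,r))^{-1/(1+\eps)}$, which is exactly where the density bound $\mu(D(x,r))\prod_i (h_i/r)^{1+\eps}$ reaches the trivial bound $1$; use the density bound below the crossover and $g\le 1$ with geometric decay of the weights above it. Choosing $\eps$ and $1-a$ small relative to $\delta=(\beta-\alpha)/3$ makes both pieces $o(r^{\alpha})$. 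This epsilon-loss truncation argument is the technical heart you were pointing at, but it cannot be packaged as the exact weighted-equals-ordinary box dimension statement your outline requires; your plan would need to be restructured along these lines.
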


Before proving Theorem~\ref{t:Gd} we need some preparation.

\begin{notation} 
For all $x=(x_1,\dots,x_d)\in \R^d$ and $h_1,\dots,h_d\geq 0$ define 
\[D(x,h_1,\dots,h_d)=\prod_{i=1}^{d} [x_i-h_i,x_i+h_i].\]
Recall that if $h_i=h$ for all $1\leq i\leq d$ then $D(x,h_1,\dots,h_d)=D(x,h)$.
\end{notation}
The following lemma generalizes \cite[Lemma~2.1]{FM} and admits a similar proof.
\begin{lemma} \label{l:app1}
Let $\nu\in \iP(\R^{d})$. Assume that $r>0$ and $\lambda_1,\dots,\lambda_d, M\geq 1$.
Then
$$\nu(\{x: \nu(D(x,\lambda_1 r,\dots, \lambda_d r))\geq M \nu(D(x,r))\})\leq 4^{d} M^{-1} \lambda_1\cdots \lambda_d.$$
\end{lemma}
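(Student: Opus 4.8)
The plan is to run a grid/Fubini argument in the spirit of \cite[Lemma~2.1]{FM}. First I would tile $\R^d$ by the half-open grid cubes $Q_{\mathbf k}=\prod_{i=1}^d [k_i r,(k_i+1)r)$ of side $r$, with centres $c_{\mathbf k}$ having coordinates $(c_{\mathbf k})_i=(k_i+\tfrac12)r$, and denote by $E$ the bad set in the statement. Two elementary geometric observations drive everything. First, if $x\in Q_{\mathbf k}$ then $\overline{Q_{\mathbf k}}\subseteq D(x,r)$, so $\nu(D(x,r))\ge \nu(Q_{\mathbf k})$. Second, if $x\in Q_{\mathbf k}$ then $|x_i-(c_{\mathbf k})_i|\le r/2$, whence by the triangle inequality $D(x,\lambda_1 r,\dots,\lambda_d r)\subseteq D\bigl(c_{\mathbf k},(\lambda_1+\tfrac12)r,\dots,(\lambda_d+\tfrac12)r\bigr)$.

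Next I would use the defining inequality of $E$ one cube at a time. If $E\cap Q_{\mathbf k}\neq\emptyset$, pick any $x$ in it; combining the two observations with $\nu(D(x,\lambda_1 r,\dots,\lambda_d r))\ge M\nu(D(x,r))$ gives $M\nu(Q_{\mathbf k})\le \nu\bigl(D(c_{\mathbf k},(\lambda_1+\tfrac12)r,\dots,(\lambda_d+\tfrac12)r)\bigr)$. Since $\nu(E\cap Q_{\mathbf k})\le \nu(Q_{\mathbf k})$ and cubes not meeting $E$ contribute nothing, summing over $\mathbf k$ yields
\[\nu(E)\le \frac1M\sum_{\mathbf k}\nu\!\left(D\!\left(c_{\mathbf k},(\lambda_1+\tfrac12)r,\dots,(\lambda_d+\tfrac12)r\right)\right).\]

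The final step is a counting/Fubini estimate on the right-hand side. Writing each summand as $\int \mathbf{1}[\,|y_i-(c_{\mathbf k})_i|\le(\lambda_i+\tfrac12)r\ \text{for all }i\,]\,\mathrm{d}\nu(y)$ and exchanging sum and integral, the total becomes $\int N(y)\,\mathrm{d}\nu(y)$, where $N(y)$ counts the grid centres $c_{\mathbf k}$ lying in the box $\prod_i[y_i-(\lambda_i+\tfrac12)r,\,y_i+(\lambda_i+\tfrac12)r]$. In each coordinate this is the number of points of a grid of spacing $r$ inside an interval of length $(2\lambda_i+1)r$, hence at most $2\lambda_i+2$, and since $\lambda_i\ge 1$ at most $4\lambda_i$. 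Thus $N(y)\le 4^d\lambda_1\cdots\lambda_d$ for every $y$, and as $\nu$ is a probability measure the displayed sum is at most $4^d\lambda_1\cdots\lambda_d$, giving the claim.

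I expect the only genuine subtlety to be pinning down the constant $4^d$ rather than a worse power. The naive move — covering the $x$-centred big box $D(x,\lambda_1 r,\dots,\lambda_d r)$ directly by grid cubes — produces a multiplicity depending on $x$, and after passing to a bound valid uniformly over $x\in Q_{\mathbf k}$ it degrades to roughly $2\lambda_i+3$ per coordinate, which is \emph{not} $\le 4\lambda_i$ when $\lambda_i$ is close to $1$. Recentring the enlarged box at the fixed lattice point $c_{\mathbf k}$ in the second observation is exactly what renders the summation multiplicity $x$-independent and keeps it at $2\lambda_i+2\le 4\lambda_i$; this is the point to handle with care. The half-open convention for the cubes (so that they partition $\R^d$ and each $x$ lies in a unique $Q_{\mathbf k}$) and the boundary count in the last step are the remaining routine details, and measurability causes no trouble here.
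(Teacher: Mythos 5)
Your proof is correct, and it achieves exactly the stated constant $4^{d}M^{-1}\lambda_1\cdots\lambda_d$: the inclusion $\overline{Q_{\mathbf k}}\subset D(x,r)$ for $x\in Q_{\mathbf k}$, the recentred inclusion $D(x,\lambda_1 r,\dots,\lambda_d r)\subset D\bigl(c_{\mathbf k},(\lambda_1+\tfrac12)r,\dots,(\lambda_d+\tfrac12)r\bigr)$, and the per-coordinate lattice count $\lfloor 2\lambda_i+1\rfloor+1\leq 2\lambda_i+2\leq 4\lambda_i$ all check out, and the exchange of sum and integral is justified by nonnegativity. Your route is, however, genuinely different from the paper's. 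The paper follows \cite[Lemma~2.1]{FM} and runs a continuous averaging argument: if $y\in A\cap D(x,r/2)$, where $A$ is the bad set, then $D(x,r/2)\subset D(y,r)$ and $D(y,\lambda_1 r,\dots,\lambda_d r)\subset D(x,2\lambda_1 r,\dots,2\lambda_d r)$, so $\nu(A\cap D(x,r/2))\leq M^{-1}\nu(D(x,2\lambda_1 r,\dots,2\lambda_d r))$ for every $x$; then Fubini against Lebesgue measure, $\nu(A)=r^{-d}\int_{\R^d}\nu(A\cap D(x,r/2))\,\mathrm{d}\iL^{d}(x)$, followed by a second application of Fubini, turns this into $r^{-d}M^{-1}\int_{\R^d}\iL^{d}(D(x,2\lambda_1 r,\dots,2\lambda_d r))\,\mathrm{d}\nu(x)=4^{d}M^{-1}\lambda_1\cdots\lambda_d$. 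So your discrete partition into grid cubes plus a lattice-point count replaces the paper's auxiliary Lebesgue measure plus a volume computation, and your recentring at the fixed centres $c_{\mathbf k}$ plays precisely the role of the paper's doubling inclusion $D(y,\lambda_i r,\dots)\subset D(x,2\lambda_i r,\dots)$: each device is what keeps the multiplicity at $4\lambda_i$ per coordinate. The paper's version is shorter and involves no case analysis about how boxes sit relative to a grid; yours is more elementary (no Lebesgue measure, no use of the symmetry $\mathbbm{1}_{D(z,h)}(x)=\mathbbm{1}_{D(x,h)}(z)$ that is implicit in the paper's two Fubini steps) and incidentally gives an independent confirmation of the constant, which is worth having: in the paper's displayed computation the normalizing factor should be $r^{-d}$ rather than $(r/2)^{-d}$, since $\iL^{d}(D(z,r/2))=r^{d}$, and it is with this correction that the computation lands exactly on $4^{d}$.
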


\begin{proof} 
Define 
$$A=\{x\in \R^{d}: \nu(D(x,\lambda_1 r,\dots,  \lambda_d r))\geq M \nu(D(x,r))\}.$$
Assume that $x\in \R^{d}$ such that $D(x,r/2)\cap A\neq \emptyset$. If $y\in A\cap D(x,r/2)$ then 
$D(x,r/2)\subset D(y,r)$ and $D(y,\lambda_1 r,\dots, \lambda_d r)\subset D(x,2\lambda_1 r,\dots, 2\lambda_d r)$. 
Thus 
\begin{align} 
\begin{split}
\label{eq:byrr}
\nu(A\cap D(x,r/2))&\leq \nu(D(y,r)) \\
&\leq M^{-1} \nu(D(y,\lambda_1 r,\dots,\lambda_d r) \\
&\leq M^{-1} \nu(D(x,2\lambda_1 r,\dots, 2\lambda_d r)).
\end{split}
\end{align}
Let $\iL^{d}$ denote the Lebesgue measure on $\R^d$. Applying \eqref{eq:byrr} and Fubini's theorem twice implies that 
\begin{align*} \nu(A)&=\int_{A} 1  \, \mathrm{d} \nu(z) \\
&=(r/2)^{-d} \int_{A} \iL^{d} (D(z,r/2))  \, \mathrm{d}  \nu(z) \\
&=(r/2)^{-d} \int_{A} \int_{\R^{d}}  \mathbbm{1}_{D(z,r/2)}(x) \, \mathrm{d} \iL^{d}(x)  
\, \mathrm{d}  \nu(z) \\
&=(r/2)^{-d} \int_{\R^{d}} \nu(A\cap D(x,r/2))  \, \mathrm{d}  \iL^{d}(x) \\
&\leq (r/2)^{-d} M^{-1} \int_{\R^{d}}  \nu(D(x,2\lambda_1 r,\dots, 2\lambda_d r))  \, \mathrm{d}  \iL^{d}(x) \\
&=(r/2)^{-d} M^{-1} \int_{\R^{d}} \iL^{d} (D(x,2\lambda_1 r,\dots, 2\lambda_d r))  \, \mathrm{d}  \nu(x) \\
&=4^{d} M^{-1} \lambda_1 \cdots \lambda_d.
\end{align*}
This completes the proof. 
\end{proof}

The following lemma slightly generalizes  \cite[Lemma~2.2]{FM}.

\begin{lemma} \label{l:app2}
Let $0<a<1$ and let $\eps>0$. There exists a constant $c_0=c_0(a,\eps,d)$ such that for every Borel measure $\nu$ on $\R^d$ for all $0<r_0\leq 1/2$ we have 
\begin{align*} \nu (\{&x: \nu(D(x,h_1,\dots,h_d))>(4h_1/r)^{1+\eps}\cdots (4h_d/r)^{1+\eps} \nu(D(x,r)) \textrm{ for some} \\
&\textrm{$r$ and $h_i$ such that } 0<r<r_0 \textrm{ and } h_i\geq r^a \textrm{ for all i} \})\leq c_0 r_0^{d\eps (1-a)}.
\end{align*}
\end{lemma}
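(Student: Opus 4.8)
The plan is to bound the measure of the exceptional set $E$ of those $x$ that admit \emph{some} scale $r\in(0,r_0)$ and side-lengths $h_i\geq r^a$ witnessing the displayed inequality, by first reducing the continuum of admissible parameters $(r,h_1,\dots,h_d)$ to a countable family of dyadic scales, and then summing the single-scale estimate furnished by Lemma~\ref{l:app1}. At the outset I would assume $\nu\in\iP(\R^d)$: Lemma~\ref{l:app1} is stated for probability measures, and the set $E$ is unchanged if $\nu$ is multiplied by a positive constant, the witnessing inequality being homogeneous of degree $1$ in $\nu$ on both sides.

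The single-scale building block is the following. Fix a dyadic radius $\hat r$ and dyadic side-lengths $\hat h_1,\dots,\hat h_d$, put $\lambda_i=\hat h_i/\hat r$, and apply Lemma~\ref{l:app1} with $M=\prod_i\lambda_i^{1+\eps}$. Since $\hat h_i\geq\hat r^a\geq\hat r$ forces $\lambda_i\geq 1$ and hence $M\geq 1$, the lemma gives
\[\nu\bigl(\{x:\nu(D(x,\hat h_1,\dots,\hat h_d))\geq M\,\nu(D(x,\hat r))\}\bigr)\leq 4^d M^{-1}\prod_i\lambda_i=4^d\prod_i\lambda_i^{-\eps}.\]
The discretization step comes next. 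Given $x\in E$ with witnessing $(r,h_1,\dots,h_d)$, I would round $r$ \emph{down} to the nearest dyadic $\hat r$ with $r/2<\hat r\leq r$, and each $h_i$ \emph{up} to the nearest dyadic $\hat h_i$ with $h_i\leq\hat h_i<2h_i$. Monotonicity of the box measure yields $\nu(D(x,\hat r))\leq\nu(D(x,r))$ and $\nu(D(x,h_1,\dots,h_d))\leq\nu(D(x,\hat h_1,\dots,\hat h_d))$, while $4h_i/r>\hat h_i/\hat r$; substituting these into the witnessing inequality shows that $x$ lies in the dyadic exceptional set for $(\hat r,\hat h_1,\dots,\hat h_d)$ bounded just above. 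The admissibility constraints propagate to $\hat r<r_0$ and $\hat h_i\geq\hat r^a$.

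Finally, covering $E$ by these dyadic sets and inserting the single-scale bound gives a factorized sum,
\[\nu(E)\leq 4^d\sum_{\hat r=2^{-j}<r_0}\ \prod_{i=1}^d\Bigl(\sum_{\hat h_i\geq\hat r^a}(\hat h_i/\hat r)^{-\eps}\Bigr).\]
Each inner geometric sum over dyadic $\hat h_i\in[\hat r^a,\infty)$ converges, because $\eps>0$, to something $\lesssim_\eps\hat r^{\eps(1-a)}$; the product over $i$ is then $\lesssim_{\eps,d}\hat r^{d\eps(1-a)}$, and the remaining geometric sum over $\hat r=2^{-j}<r_0$ is $\lesssim_{a,\eps,d}r_0^{d\eps(1-a)}$. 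Collecting constants produces $c_0=c_0(a,\eps,d)$. I expect the discretization to be the only delicate point: one must orient the two roundings so that both monotonicities push the inequality in the favorable direction, verify $\lambda_i\geq 1$ so that Lemma~\ref{l:app1} is applicable, and check that the built-in factor $4$ in the statement exactly absorbs the loss incurred in replacing $4h_i/r$ by $\hat h_i/\hat r$. The rest is a routine factorized geometric-series estimate.
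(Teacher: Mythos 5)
Your proposal is correct and takes essentially the same route as the paper: the paper likewise rounds $r$ down and each $h_i$ up to dyadic values $2^{-p}$ and $2^{-q_i}$, applies Lemma~\ref{l:app1} with $M=2^{(1+\eps)\sum_i(p-q_i)}=\prod_i(\hat h_i/\hat r)^{1+\eps}$ to the resulting dyadic exceptional sets, and sums the factorized geometric series to get the bound $\lesssim_{a,\eps,d} r_0^{d\eps(1-a)}$. Your explicit reduction to $\nu\in\iP(\R^d)$, which Lemma~\ref{l:app1} formally requires, is a detail the paper passes over silently, and your orientation of the two roundings and verification that $\lambda_i\geq 1$ match the paper's argument exactly.
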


\begin{proof}  
Let $p\in \N$ and $q_i\in \Z$ be arbitrary such that $q_i\leq p$ for all $1\leq i\leq d$. 
Define 
 \[ A_{p,q_1,\dots,q_d}=\{x: \nu(D(x,2^{-q_1},\dots, 2^{-q_d}))\geq 2^{(1+\eps)\sum_{i=1}^{d} (p-q_i)} 
 \nu(D(x, 2^{-p}))\}.\] 
Lemma~\ref{l:app1} implies that  
\begin{equation} \label{eq:muA}
 \nu(A_{p,q_1,\dots,q_d})\lesssim_{d} 2^{-\eps\sum_{i=1}^d(p-q_i)}.
\end{equation} 
Fix $r_0\leq 1/2$ and $p_0\geq 0$ such that 
$r_0=2^{-1-p_0}$. Define 
\begin{align*} 
B=\{&x: \nu(D(x,h_1,\dots, h_d))>(4h_1/r)^{1+\eps}\cdots (4h_d/r)^{1+\eps}\nu(D(x,r))   \\
&\textrm{for some $r$ and $h_i$ such that $0<r<r_0$ and $h_i\geq r^a$ for all $i$} \}.
\end{align*}
First we prove that 
\begin{equation} \label{D_0}
B\subset \bigcup_{p\geq p_0} \bigcup_{q_1\leq ap} \dots \bigcup_{q_d\leq ap} A_{p,q_1,\dots,q_d}.
\end{equation}
Indeed, assume that $x\in B$. 
Let $0<r<r_0$ and $h_i\geq r^a$ such that 
\begin{equation} \label{eq:xdef}
\nu(D(x,h_1,\dots,h_d))> (4h_1/r)^{1+\eps}\cdots (4h_d/r)^{1+\eps}\nu(D(x,r)).
\end{equation}
There are unique numbers $p\in \N$ and $q_i\in \Z$ such that  $2^{-p} \leq r < 2^{-p+1}$ and $2^{-q_i-1}<h_i\leq 2^{-q_i}$ for all $1\leq i\leq d$. Then $r<r_0$ implies that $2^{-p}<2^{-1-p_0}$, so $p>p_0$. The inequalities $h_i\geq r^a$ yield that $2^{-q_i} \geq 2^{-pa}$, that is, $q_i\leq ap$. Therefore, in order to show \eqref{D_0} it is enough to prove that $x\in A_{p,q_1,\dots,q_d}$. The above inequalities for $r$ and $h_i$ and \eqref{eq:xdef} imply that 
\[\nu(D(x,2^{-q_1},\dots,2^{-q_d}))> 2^{(1+\eps)\sum_{i=1}^{d} (p-q_i)} \nu(D(x,2^{-p})),\]
hence $x\in A_{p,q_1,\dots,q_d}$, so \eqref{D_0} holds.
Finally, \eqref{D_0}, \eqref{eq:muA}, and elementary calculations show that
\begin{align*} \nu(B)&\leq \nu \left(\bigcup_{p\geq p_0} \bigcup_{q_1\leq ap} \dots \bigcup_{q_d\leq ap} A_{p,q_1,\dots,q_d}\right) \\
&\lesssim_{d} \sum_{p\geq p_0} \sum_{q_1\leq ap}\dots 
\sum_{q_d\leq ap} 2^{-\sum_{i=1}^d(p-q_i)\eps}\\
	&=\sum_{p\geq p_0} (2^{\eps p(a-1)}/(1-2^{-\eps}))^d \\
	&\lesssim_{\eps,d} \sum_{p\geq p_0} 2^{\eps d p (a-1)} \\
	&\lesssim_{\eps,d,a} (2^{-p_0-1})^{d\eps (1-a)}\\
	&=r_0^{d\eps (1-a)}.
	\end{align*}
The proof is complete.
\end{proof}

The Borel-Cantelli Lemma implies the following. 

\begin{corollary} \label{c:main} Let $0<a<1$ and let $\eps>0$. For every Borel measure $\nu$ on $\R^d$ for $\nu$ almost every $x$ there is an $r_0=r_0(x)\in (0,1)$ such that for all $r,h_i$ satisfying $0<r<r_0$ and $h_i\geq r^a$ for all $1\leq i\leq d$ we have 
\[
\nu(D(x,h_1,\dots,h_d))\leq \nu(D(x,r)) \prod_{i=1}^d (4h_i/r)^{1+\eps}.\]
\end{corollary}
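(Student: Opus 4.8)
The plan is to apply the first Borel--Cantelli lemma to a discretized family of the ``bad sets'' supplied by Lemma~\ref{l:app2}. For each $r_0\in(0,1/2]$ put
\[ B_{r_0}=\{x:\nu(D(x,h_1,\dots,h_d))>(4h_1/r)^{1+\eps}\cdots(4h_d/r)^{1+\eps}\,\nu(D(x,r)) \text{ for some } 0<r<r_0,\ h_i\geq r^a\}, \]
so that Lemma~\ref{l:app2} reads $\nu(B_{r_0})\leq c_0 r_0^{d\eps(1-a)}$. The point to notice first is that $x\notin B_{r_0}$ says \emph{exactly} that the desired inequality (with $\leq$) holds for every $r<r_0$ and every choice of $h_i\geq r^a$; hence it suffices to show that for $\nu$-a.e.\ $x$ there exists $r_0=r_0(x)\in(0,1)$ with $x\notin B_{r_0}$.

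First I would record the monotonicity $B_{r_0'}\subseteq B_{r_0}$ whenever $r_0'\leq r_0$, which is immediate because shrinking $r_0$ only removes admissible pairs $(r,h_1,\dots,h_d)$ from the defining condition. Next I specialize to the sequence $r_0=2^{-k}$, $k\geq 1$, for which Lemma~\ref{l:app2} gives $\nu(B_{2^{-k}})\leq c_0 2^{-k d\eps(1-a)}$. Since $a<1$, $\eps>0$ and $d\geq 1$, the exponent $d\eps(1-a)$ is strictly positive, so $\sum_{k\geq 1}\nu(B_{2^{-k}})<\infty$; note that each term is finite even when $\nu$ is not a finite measure, which is all the first Borel--Cantelli lemma needs.

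Then Borel--Cantelli yields $\nu\big(\limsup_{k}B_{2^{-k}}\big)=0$. Using the monotonicity we have $\bigcup_{k\geq n}B_{2^{-k}}=B_{2^{-n}}$, hence $\limsup_{k}B_{2^{-k}}=\bigcap_{n}B_{2^{-n}}$, and therefore for $\nu$-a.e.\ $x$ there is some $k_0=k_0(x)$ with $x\notin B_{2^{-k_0}}$. Taking $r_0(x)=2^{-k_0}\in(0,1)$ then gives the conclusion by the reformulation in the first paragraph.

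I do not foresee a substantive obstacle: essentially all of the content sits in Lemma~\ref{l:app2}. The only points that require a little care are getting the direction of the monotonicity $B_{r_0'}\subseteq B_{r_0}$ right, checking that the exponent $d\eps(1-a)$ is positive so that the geometric series converges, and verifying that the complement of $B_{r_0}$ corresponds precisely to the non-strict inequality asserted in the statement.
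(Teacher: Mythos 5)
Your proposal is correct and follows exactly the paper's route: the paper derives Corollary~\ref{c:main} from Lemma~\ref{l:app2} by a direct application of the (first) Borel--Cantelli lemma, which is precisely your argument with the dyadic sequence $r_0=2^{-k}$, the convergent series $\sum_k c_0 2^{-k d\eps(1-a)}$, and the monotonicity $B_{r_0'}\subseteq B_{r_0}$ identifying the limsup with $\bigcap_n B_{2^{-n}}$. The details you spell out (the complement of $B_{r_0}$ giving the non-strict inequality, positivity of the exponent $d\eps(1-a)$, and finiteness of each term for a general Borel measure) are exactly the points the paper leaves implicit.
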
 
 
Integration by parts formula for Riemann-Stieltjes integrals in $\R^d$ was obtained by Young~\cite[Formula~(18)]{Y}, for a slightly more general version see Zaremba~\cite[Proposition~2]{Za}. Applying the latter one twice and taking limits yields the following.

\begin{theorem} \label{t:parts} Let $\mu$ be a finite Borel measure on $\R^d$. Define $g\colon [0,\infty)^d\to [0,\infty)$ as $g(x)=\mu\left(\prod_{i=1}^d [0,x_i)\right)$. 
Let $f\colon [0,\infty)^d\to \R$ be a continuous function such that $f(x)\to 0$ as $|x|\to \infty$. Then 
\[\int_{[0,\infty)^d} f(x)  \, \mathrm{d} \mu(x)=(-1)^d \int_{[0,\infty)^d} g(x)  \, \mathrm{d} f(x).\]	
\end{theorem}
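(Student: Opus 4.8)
The statement to prove is the $d$-dimensional Riemann--Stieltjes integration-by-parts identity of Theorem~\ref{t:parts}. The first thing I would observe is that $g(x)=\mu\left(\prod_{i=1}^d[0,x_i)\right)$ is precisely the lower distribution function of the finite measure $\mu$, so the Lebesgue--Stieltjes measure $\mathrm{d} g$ coincides with $\mu$: the mixed increment of $g$ over any half-open cell equals its $\mu$-measure, hence every Riemann--Stieltjes sum $\sum_i f(\xi_i)\,\Delta_{R_i}g$ is a Riemann sum for $\int f\,\mathrm{d}\mu$. Thus for continuous $f$ on a box $Q_R=[0,R]^d$ one has $\int_{Q_R} f\,\mathrm{d} g=\int_{Q_R} f\,\mathrm{d}\mu$, and the claim is equivalent to $\int_{[0,\infty)^d} f\,\mathrm{d} g=(-1)^d\int_{[0,\infty)^d} g\,\mathrm{d} f$. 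I would also record at this stage that $g$ is bounded, $0\le g\le\mu(\R^d)$, and of finite Hardy--Krause variation: its Vitali variation equals $\mu(\R^d)$, and fixing a coordinate at its top value $R$ turns $g$ into the distribution function of a projected finite measure, so by induction on $d$ all lower-dimensional variations are finite. This is what guarantees that each Riemann--Stieltjes integral appearing below exists for the continuous $f$.

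Next I would work on the finite box $Q_R$ and invoke the $d$-dimensional formula of Young~\cite{Y} in the sharper form of Zaremba~\cite[Proposition~2]{Za}. Following the strategy stated before the theorem, applying it twice amounts to transferring the increment from $f$ onto $g$ one coordinate at a time, and it yields
\[\int_{Q_R} f\,\mathrm{d} g=(-1)^d\int_{Q_R} g\,\mathrm{d} f+B_R,\]
where $B_R$ is a finite sum of boundary contributions, one for each proper face of $Q_R$ obtained by fixing a nonempty set of coordinates to an endpoint $0$ or $R$, each being bilinear in the restrictions of $f$ and $g$ to that face and its complement. The one-variable instance $\int_0^R f\,\mathrm{d} g=-\int_0^R g\,\mathrm{d} f+f(R)g(R)-f(0)g(0)$ already displays the sign $(-1)^d$ and the two mechanisms by which the boundary will disappear.

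The crux is then to show $B_R\to0$ as $R\to\infty$. Any face with a coordinate fixed at $0$ carries $g\equiv0$, since the defining product then contains the empty interval $[0,0)$, so its contribution vanishes identically. On every remaining face some coordinate equals $R$, so $f$ is evaluated only at points with $|x|\ge R$; as $f(x)\to0$ when $|x|\to\infty$, the supremum of $|f|$ over such points tends to $0$, and bounding the associated lower-dimensional integrals by this supremum times the finite variation of $g$ makes each such term $o(1)$. Hence $B_R\to0$. Finally, since $f$ is bounded and $\mu$ finite, dominated convergence gives $\int_{Q_R} f\,\mathrm{d}\mu\to\int_{[0,\infty)^d} f\,\mathrm{d}\mu$, while the same tail estimate and the finite variation of $g$ supply a Cauchy criterion showing $\int_{Q_R} g\,\mathrm{d} f$ converges to the well-defined integral over the quadrant. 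Letting $R\to\infty$ in the displayed identity yields $\int_{[0,\infty)^d} f\,\mathrm{d}\mu=(-1)^d\int_{[0,\infty)^d} g\,\mathrm{d} f$.

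I expect the main obstacle to be the bookkeeping of the boundary sum $B_R$: one must check that the general formula produces only face terms of the stated bilinear type and that each really vanishes under one of the two mechanisms, with all lower-dimensional Riemann--Stieltjes integrals uniformly controlled by the Hardy--Krause variation of $g$. The accompanying technical point is making the existence of $\int g\,\mathrm{d} f$ over the unbounded quadrant rigorous rather than merely over $Q_R$, which is handled by the same variation-and-tail estimate.
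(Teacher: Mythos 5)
Your proposal is correct and follows essentially the same route as the paper, whose entire ``proof'' is the remark preceding the statement: invoke Young's formula in Zaremba's form \cite[Proposition~2]{Za} on finite boxes and take limits, with the boundary terms killed exactly by your two mechanisms ($g\equiv 0$ on faces where a coordinate is fixed at $0$, since $[0,0)=\emptyset$, and the decay of $f$ together with the Hardy--Krause variation bound $V(g)\le\mu(\R^d)$ on faces at height $R$). Your write-up simply makes explicit the bookkeeping the paper leaves to the references.
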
 

Now we are able to prove Theorem~\ref{t:Gd}.

\begin{proof}[Proof of Theorem~\ref{t:Gd}] Since $G_d^{\mu}(x,r)\geq \mu(D(x,r))$ for all $x\in \R^{n+d}$ and $r>0$, Theorem~\ref{t:pmu} implies that 
	\[ \Dim \mu\geq \sup\left\{\gamma: \liminf_{r\to 0+} r^{-\gamma} G_{d}^{\mu}(x,r)=0 \textrm{ for $\mu$-a.e.}\ x\in \R^{n+d}\right\}.\]
	For the other direction let $0\leq \alpha<\beta$ be arbitrary and let $\delta=(\beta-\alpha)/3$. It is enough to prove that for $\mu$-almost every $x\in \R^{n+d}$ if 
\begin{equation}  \label{liminf1}
	\liminf_{r\to 0+} r^{-\beta} \mu(D(x,r))=0,
\end{equation}
	then 
\begin{equation} \label{liminf2}
	\liminf_{r\to 0+} r^{-\alpha} G_d^{\mu}(x,r)=0.
\end{equation} 
Fix $\eps>0$ and $0<a<1$ such that 
\begin{equation} \label{eq:eps}	
\max\{(1-a)(1+\eps)d,\eps \beta(d+1)\}<\delta.
\end{equation}

Fix $x_0\in \R^{n+d}$ for which \eqref{liminf1} holds and Corollary~\ref{c:main} is satisfied with $\eps$, $a$, and $r_0=r_0(x_0)\in (0,1)$. We may assume that $x_0=\mathbf{0}=(\mathbf{0}_n,\mathbf{0}_d)$ and for the sake of notational simplicity let $D(\mathbf{0},r)=D(r)$. Fix $r\in (0,r_0)$ such that $\mu(D(r))<r^{\beta}$. Let 
\begin{align*} 
s&=r^{d}(\mu(D(r)))^{-1/(1+\eps)}, \\
t&=\log s-d\log r=\frac{-\log(\mu(D(r)))}{1+\eps}.
\end{align*} 
Let $\nu=\mu_{\mathbf{0}_n,r}$ be the measure according to Definition~\ref{d:gdm}.
Let $g\colon [0,\infty)^d\to [0,\infty)$ be defined by
	\[ g(h_1,\dots,h_d)=\nu\left(\prod_{i=1}^d [0,h_i)\right)=\mu \left([-r,r]^d\times \prod_{i=1}^d [0,h_i)\right).\]
	By Corollary~\ref{c:main} and \eqref{eq:eps} for all $h_1,\dots,h_d\geq r$ we have 
\begin{align} \label{bxh}
\begin{split}  
g(h_1,\dots,h_d)&\leq g(\max\{h_1,r^a\},\dots, \max\{h_d,r^a\}) \\
&\leq \nu\left([-r,r]^d\right) \prod_{i=1}^{d} (4\max\{h_i,r^a\}/r)^{1+\eps} \\ 
&\leq \nu\left([-r,r]^d\right) (4r^a/r)^{(1+\eps)d}
\prod_{i=1}^{d}(4h_i/r)^{(1+\eps)}\\
&\lesssim_d \mu(D(r)) r^{-\delta} \prod_{i=1}^{d}(h_i/r)^{(1+\eps)}.
\end{split}
\end{align}

Let 
\[A=[r,\infty)^d\cap\{(h_1,\dots,h_d): h_1\cdots h_d\leq s\} \quad \textrm{and} \quad B=[r,\infty)^d\setminus A.\]

By Theorem~\ref{t:parts} we have 
\begin{align*}  \int_{[0,\infty)^d} I_d(y/r)  \, \mathrm{d} \nu(y)&=(-1)^d \int_{[0,\infty)^d} g(h) \, \mathrm{d} I_d(h/r) \\
&=(-1)^d \int_{[r,\infty)^d} g(h) \frac{\partial^{d} I_d(h/r)}{\partial h_1\cdots \partial h_d}  \, \mathrm{d} h \\
&=\int_{[r,\infty)^d} g(h) r^d(h_1\cdots h_d)^{-2} \, \mathrm{d} h  \\
&=a(r)+b(r),
\end{align*} 
where 
\[a(r)=\int_{A} g(h) r^d(h_1\cdots h_d)^{-2} \, \mathrm{d} h  \quad \textrm{and} \quad 
b(r)=\int_{B} g(h) r^d(h_1\cdots h_d)^{-2} \, \mathrm{d} h. \]
In order to prove \eqref{liminf2} it is enough to show that 
\begin{equation} \label{eq:ab} 
a(r)=o(r^{\alpha}) \quad \textrm{and} \quad b(r)=o(r^{\alpha}) \quad \textrm{as } r\to 0+.
\end{equation} 
 Indeed, dividing $\R^d$ into $2^d$ parts according to the signs of coordinates similarly yields  
\[G_d^{\mu}(\mathbf{0},r)=\int_{\R^d} I_d(y/r)  \, \mathrm{d} \nu(y)\lesssim_d a(r)+b(r)=o(r^{\alpha}) \]
as $r\to 0+$, and \eqref{liminf2} will follow. 

Now we will prove \eqref{eq:ab}. Let $\iL^{d-1}$ denote the Lebesgue measure on the affine hyperplanes of $\R^d$. 
The substitution $h_i=re^{y_i}$, Fubini's theorem, and the inequality $\log(1/v)\lesssim_{\eps} v^{-\eps}$ yield that 
\begin{align*} r^{-\eps d} \int_{A}\prod_{i=1}^{d} h_i^{\eps-1} \, \mathrm{d} h &=\int_{y_i\geq 0,~\sum_i y_i\leq t} 
e^{\eps(y_1+\dots+y_d)} \, \mathrm{d} y \\
&\leq  \int_{u=0}^{t} e^{\eps u} \iL^{d-1}\left(\{y_i: y_i\geq 0,\, y_1+\dots+y_d=u\}\right) \, \mathrm{d} u \\
&\lesssim_d  \int_0^{t} e^{\eps u} u^{d-1} \, \mathrm{d} u \\ 
&\leq e^{\eps t}  t^d \\
&\lesssim_{\eps} \mu(D(r))^{-\eps/(1+\eps)} \mu(D(r))^{-d\eps/(1+\eps)} \\
&\leq \mu(D(r))^{-\eps(d+1)}.
\end{align*} 
Inequalities~\eqref{bxh}, \eqref{eq:eps}, and $\mu(D(r))<r^{\beta}$ imply that  
\begin{align} \label{ar} 
\begin{split}
a(r) &\lesssim_d \mu(D(r)) r^{-\delta-\eps d} \int_{A}\prod_{i=1}^{d} h_i^{\eps-1} \, \mathrm{d} h \\
&\lesssim_{d,\eps} \mu(D(r))^{1-\eps(d+1)} r^{-\delta}\\
&\leq r^{\beta-\beta \eps (d+1)-\delta} \\
&\leq r^{\beta-2\delta}=o(r^{\alpha})
\end{split} 
\end{align}	
as $r\to 0+$. 
Finally, $g(h)\leq 1$, the above steps, and \eqref{eq:eps} imply that 

\begin{align} \label{br} 
\begin{split} 
b(r)&\leq r^d \int_{B} (h_1\cdots h_d)^{-2}  \, \mathrm{d} h
\\&\leq \int_{y_i\geq 0,~\sum_i y_i\geq t} 
e^{-(y_1+\dots+y_d)} \, \mathrm{d} y \\
&= \int_{u=t}^{\infty} e^{-u} \iL^{d-1}\left(\{y_i: y_i\geq 0,\, y_1+\dots+y_d=u\}\right) \, \mathrm{d} u \\
&\lesssim_d  \int_{u=t}^{\infty} e^{-u} u^{d-1}  \, \mathrm{d} u \\
&\lesssim_d  e^{-t}(t^{d-1}+1) \\
&\lesssim_{\eps} \mu(D(r))^{1/(1+\eps)} \mu(D(r))^{-\eps(d-1)/(1+\eps)} \\
&\leq \mu(D(r))^{1-\eps d} \\
& \leq r^{\beta-\beta \eps d}\\
&\leq r^{\beta-\delta}=o(r^{\alpha})
\end{split} 
\end{align}   
as $r\to 0+$. Inequalities \eqref{ar} and \eqref{br} imply \eqref{eq:ab}, and the proof is complete.  
\end{proof} 

\section{Packing dimension of the graph of fractional Brownian motion} \label{s:graph} 

The goal of this section is to prove Theorems~\ref{t:ad} and \ref{t:ad2}. 

For $x\in \R$ we will denote by $\lfloor x \rfloor$ and $\lceil x \rceil$ the lower and upper integer part of $x$, respectively.
The following lemma is basically \cite[Lemma~4.1]{TX}.

\begin{lemma} \label{l:Eg}
Let $A\subset \R$ be a compact set and let $\gamma<\Dim A$. Let $\theta>0$. Then there is a compact set $E_{\gamma}=E(\gamma,\theta)\subset A$ with 
$\Dim E_{\gamma}\geq \gamma$ such that $E_{\gamma}=\bigcap_{n=1}^{\infty} D_n$, where 
\[D_n=\bigcup_{i_1=1}^{N_0}\dots \bigcup_{i_n=1}^{N_{i_1\dots i_{n-1}}}
I_{i_1\dots i_n}\] 
with closed intervals $I_{i_1\dots i_n}$, and for all $n$ and appropriate indexes
\begin{enumerate}[(i)]
\item the intervals $I_{i_1\dots i_n}$ ($1\leq i_n\leq N_{i_1\dots i_{n-1}})$ are disjoint subintervals of $I_{i_1\dots i_{n-1}}$ for all $n\geq 2$, 
\item \label{i2} there exist $\eta_{i_1\dots i_n}>0$ such that
$\eta_{i_1\dots i_n}^{\theta}<\eta_{i_1\dots i_{n-1}}$ and the  distance between the intervals $I_{i_1\dots i_{n-1}j}$ is at least $\eta_{i_1\dots i_{n-1}}$,
\item \label{i3}
there exists a Borel probability measure $\mu$ such that $\supp(\mu)=E_{\gamma}$ and we have
$\mu(I_{i_1\dots i_n})\leq \eta_{i_1\dots i_{n-1}}^{\gamma}$. 	
\end{enumerate} 
\end{lemma}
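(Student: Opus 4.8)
The plan is to realize $E_\gamma$ as the limit set of an explicit Cantor-type tree of intervals sitting inside $A$, with the branching rate at each node governed by a local box-counting estimate, and then to read off $(\mathrm{i})$–$(\mathrm{iii})$ from the construction. Throughout fix an auxiliary exponent $\gamma'$ with $\gamma < \gamma' < \Dim A$.

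First I would carry out a regularization step, which I expect to be the main obstacle. Since $\overline{\dim}_M$ is only a $\limsup$ and can drop on small pieces, in order to run the construction at every node I need a compact subset of $A$ on which the upper Minkowski dimension is bounded below \emph{locally and uniformly}. Precisely, the goal is to produce a compact set $K \subset A$ such that $\overline{\dim}_M(K \cap U) \ge \gamma'$ for every open $U$ with $K \cap U \ne \emptyset$. This is exactly the content hidden in the definition $\Dim E = \inf\{\sup_i \overline{\dim}_M E_i : E = \bigcup_i E_i\}$: were no such $K$ to exist, one could exhaust $A$ by removing relatively open pieces of upper Minkowski dimension $< \gamma'$, obtaining a countable cover witnessing $\Dim A \le \gamma' < \Dim A$, a contradiction. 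I would present this as a transfinite removal argument (the Joyce--Preiss style regularization), using that $A$ is already compact.

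Second, granted such a $K$, I would build the tree by induction on $n$, maintaining that each chosen interval meets $K$ in its interior. Suppose $I_{i_1\dots i_{n-1}}$ has been fixed. Because $\overline{\dim}_M\bigl(K \cap \inter I_{i_1\dots i_{n-1}}\bigr) \ge \gamma'$, there are arbitrarily small scales $\delta$ with $N\bigl(K \cap \inter I_{i_1\dots i_{n-1}},\delta\bigr) \ge \delta^{-\gamma'}$. Fixing one such $\delta$, small enough that all children fall in the interior, I would thin a near-optimal $\delta$-cover to a $c\delta$-separated family of points of $K$ of cardinality $N_{i_1\dots i_{n-1}} \gtrsim \delta^{-\gamma'}$, and around each place a short closed interval $I_{i_1\dots i_{n-1}j}$ meeting $K$. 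Setting $\eta_{i_1\dots i_{n-1}} \asymp \delta$ secures the separation required by $(\mathrm{ii})$ and the disjointness of $(\mathrm{i})$; the surplus exponent forces $N_{i_1\dots i_{n-1}} \gtrsim \delta^{-\gamma'} \ge \eta_{i_1\dots i_{n-1}}^{-\gamma}$ once $\delta$ is small, absorbing the separation constant. Finally, choosing each child so short that $\eta_{i_1\dots i_n}^{\theta} < \eta_{i_1\dots i_{n-1}}$ is a free extra constraint, since one may always descend to a still smaller scale at the next level.

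Third, I would define $\mu$ as the natural self-similar mass distribution, splitting the mass of each node equally among its children, so that $\mu\bigl(I_{i_1\dots i_n}\bigr) = \mu\bigl(I_{i_1\dots i_{n-1}}\bigr)/N_{i_1\dots i_{n-1}} \le N_{i_1\dots i_{n-1}}^{-1} \le \eta_{i_1\dots i_{n-1}}^{\gamma}$, which is $(\mathrm{iii})$; then $E_\gamma = \bigcap_n D_n$ is compact with $\supp\mu = E_\gamma$. To obtain $\Dim E_\gamma \ge \gamma$ I would invoke Theorem~\ref{t:pmu} together with $\Dim\mu \le \Dim E_\gamma$: for $x \in E_\gamma$ and any $\gamma'' < \gamma$, taking radii $r$ just below the separation scale $\eta_{i_1\dots i_{n-1}}$ forces $B(x,r)$ to meet only the single level-$n$ interval containing $x$, whence $\mu(B(x,r)) \le \eta_{i_1\dots i_{n-1}}^{\gamma}$ and $r^{-\gamma''}\mu(B(x,r)) \lesssim \eta_{i_1\dots i_{n-1}}^{\gamma-\gamma''} \to 0$ as $n \to \infty$. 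Thus $\liminf_{r\to 0+} r^{-\gamma''}\mu(B(x,r)) = 0$ for every $\gamma'' < \gamma$, giving $\Dim\mu \ge \gamma$ and hence $\Dim E_\gamma \ge \gamma$, which completes the proof.
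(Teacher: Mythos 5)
Your proposal is correct and takes essentially the same route as the paper's own source: the paper does not prove Lemma~\ref{l:Eg} but cites it as ``basically \cite[Lemma~4.1]{TX}'', and the Talagrand--Xiao argument is precisely your scheme --- regularize to a compact kernel $K\subset A$ every relatively open piece of which has upper Minkowski dimension at least $\gamma'$, build a separated Cantor-type tree of intervals inside $K$ by box-counting at each node, equidistribute mass along the tree, and conclude $\Dim E_\gamma\geq\gamma$ from the density criterion of Theorem~\ref{t:pmu}. The only detail to handle carefully is the kernel step: carry out the removal with packing dimension (which is countably stable), or do the transfinite removal you indicate, since a one-shot Lindel\"of removal of open pieces $U$ with $\overline{\dim}_M(A\cap U)<\gamma'$ does not directly show that every relatively open subset of the remaining kernel has large upper Minkowski dimension.
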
  

Now we are ready to prove Theorem~\ref{t:ad}. 

\begin{proof}[Proof of Theorem~\ref{t:ad}] Let $\{X(t): t\in \R\}$ be a $d$-dimensional fractional Brownian motion and let $Z=X^{*}$. By Corollary~\ref{c:h2} we may assume that $A$ is compact. As packing dimension cannot increase under a projection, Theorem~\ref{t:TX} implies that, almost surely, we have
\[\Dim Z(A)\geq \Dim X(A)\geq \frac{\beta d}{\alpha d+\beta(1-\alpha d)}.\]  
Let $\theta=1/(d+1-\alpha d)$ and fix $\gamma<\beta=\Dim A$. Let $E_{\gamma}=E(\gamma, \theta)\subset A$ be a compact set and let $\mu$ be a Borel probability measure with $\supp(\mu)=E_{\gamma}$ according to Lemma~\ref{l:Eg}. By Theorem~\ref{t:lbb} it is enough to show that 
\begin{equation} \label{eq:gam}  
\Dim_{Z} E_{\gamma}\geq \gamma(d+1-\alpha d).
\end{equation} 
Fix an arbitrary $t\in E_{\gamma}$ and let $\{i_n\}_{n\geq 1}$ be the sequence of positive integers such that $t\in I_{i_1\dots i_n}$ for all $n\in \N^+$. 
Let us endow $\R^{1+d}$ with the maximum norm $||\cdot||$. By Remark~\ref{r:1} it is enough to prove that for all $n\geq 2$ we have
\begin{equation} \label{eq:i12}
\E (\mu_Z(D(Z(t),\eta_{i_1\dots i_{n-1}}^{\theta})))\lesssim_{\alpha,d} \eta_{i_{1}\dots i_{n-1}}^{\gamma}.
\end{equation}  
Indeed, \eqref{eq:i12} implies that 
for all $\delta<\gamma/\theta$ we have 
\[\liminf_{r\to 0+} r^{-\delta} \E(\mu_Z (D(Z(t),r)))=0, \]
hence 
\[\Dim_Z E_{\gamma}\geq \gamma/\theta=\gamma(d+1-\alpha d).\]
 As $\gamma<\beta$ was arbitrary, this implies \eqref{eq:gam}. 

Finally, we prove \eqref{eq:i12}. 
Fix $n\geq 2$ and let $\eta=\eta_{i_1\dots i_{n-1}}$. By Lemma~\ref{l:Eg}~\eqref{i2} the interval $B(t,\eta^{\theta})$ intersects only $n$th level intervals of the form $I_{i_1\dots i_{n-1}j}$, which are separated from each other by at least $\eta$. Lemma~\ref{l:Eg}~\eqref{i3} yields that every interval $I=I_{i_1\dots i_{n-1} j}$ satisfies $\mu(I)\leq \eta^{\gamma}$. Applying Lemma~\ref{l:1}, the inequality $\P(N\leq r)\lesssim \min\{1,r\}$, the above observations, and $\alpha d<1$ in this order implies 
\begin{align*} 
\E (\mu_Z(D(Z(t),\eta^{\theta})))&=\int_{B(t,\eta^{\theta})} \P(||X(s)-X(t)||\leq \eta^{\theta})  \, \mathrm{d} \mu(s)  
\\
&\lesssim_{d}
\int_{B(t,\eta^{\theta})} \min\left\{1,\frac{\eta^{\theta d}}{|s-t|^{\alpha d}}\right\} 
 \, \mathrm{d} \mu(s) \\
&\leq \mu(I_{i_1\dots i_n})+2\eta^{\theta d} \eta^{\gamma}
\sum_{j\leq \eta^{\theta}/\eta} \frac{1}{(j\eta)^{\alpha d}} \\
&\leq \eta^{\gamma}+2\eta^{\gamma+(\theta-\alpha)d} \sum_{j\leq \eta^{\theta-1}} j^{-\alpha d} \\
&\lesssim_{\alpha d} \eta^{\gamma} +\eta^{\gamma+(\theta-\alpha)d} \eta^{(\theta-1)(1-\alpha d)}\\
&=2 \eta^{\gamma}.
\end{align*} 
Thus \eqref{eq:i12} holds, and the proof is complete. 
\end{proof}    

Before proving Theorem~\ref{t:ad2} we construct the compact sets $A_{\beta}\subset \R$ following \cite{TX}. Fix $\beta\in (0,1)$. By \cite[Lemma~3.1]{TX} we can choose two sequences of positive numbers $\{\delta_k\}_{k\geq 0}$ and $\{\eta_k\}_{k\geq 1}$ with $\delta_0<1/2$ such that for all $k\geq 1$ we have 
\begin{align} 
\delta_k&<\eta_k<1, \label{eq:l1} \\
\delta_{k-1}&=2\eta_k^{1-\beta}, \label{eq:l2} \\ 
m_1 m_2 \cdots m_k&\leq \delta_k^{-2^{-(k+1)}}, \label{eq:l3}  
\end{align} 
where $m_k=\lfloor \eta_k^{-\beta} \rfloor$. 
Let $D_0=[0,1]$ and let $D_1=\bigcup_{i_1=1}^{m_1} I_{i_1}$ be the union of $m_1$ closed subintervals of $[0,1]$ of length $\delta_1$ such that the distance between any two consecutive intervals equals $\eta_1$. As $m_1(\eta_1+\delta_1)<1$ by \eqref{eq:l1} and \eqref{eq:l2}, this is possible. Assume that 
$D_{k-1}=\bigcup_{i_1=1}^{m_1}\dots \bigcup_{i_{k-1}=1}^{m_{k-1}} I_{i_1 \dots i_{k-1}}$ is constructed as the union of $m_1\cdots m_{k-1}$ closed intervals for some $k\geq 2$. Fix $i_1,\dots,i_{k-1}$. 
By \eqref{eq:l1} and \eqref{eq:l2} we have $m_k(\eta_k+\delta_k)\leq \delta_{k-1}$ and the length of $I_{i_1\dots i_{k-1}}$ equals $\delta_{k-1}$, so we can construct closed subintervals $I_{i_1\dots i_k}$ $(1\leq i_k\leq m_k)$ in $I_{i_1\dots i_{k-1}}$ with length $\delta_k$ such that the distance of any two consecutive intervals is exactly $\eta_k$. Define 
\[D_k=\bigcup_{i_1=1}^{m_1}\dots \bigcup_{i_{k}=1}^{m_{k}} I_{i_1 \dots i_{k}},\]
and let 
\[A_{\beta}=\bigcap_{k=1}^{\infty} D_k.\]
Now we are ready to prove Theorem~\ref{t:ad2} 
\begin{proof}[Proof of Theorem~\ref{t:ad2}] 
Fix $0<\beta<1$ and let $A_{\beta}$ be the set defined above. Clearly $A_{\beta}\subset [0,1]$ is compact and by \cite[Lemma~3.2]{TX} we have $\Dim A_{\beta}=\beta$. For the sake of simplicity endow $\R^{1+d}$ with the maximum norm, this does not modify the packing dimension. Let 
$f\colon A_{\beta} \to \R^d$ be a fixed $\alpha$-H\"older continuous function, where $0<\alpha<1$ and $d\in \N^+$. We may assume by scaling 
that for all $s,t\in A_{\beta}$ we have 
\begin{equation} \label{eq:Hold} |f(t)-f(s)|\leq |t-s|^{\alpha}.
\end{equation}
As packing dimension is smaller than equal to upper Minkowski dimension, it is enough to prove that  
\begin{equation} \label{eq:uM} 
\overline{\dim}_M f^{*}(A_{\beta})\leq \max\left\{ \frac{\beta d}{\alpha d+\beta(1-\alpha d)},\beta(d+1-\alpha d)\right\}.
\end{equation} 
Before proving \eqref{eq:uM} we need some technical preparation. 
Define the functions $g,h\colon [1,\infty)\to \R^+$ as 
\[g(x)=\frac{\beta}{\alpha(1-\beta)x},\]
and  
\begin{equation*} 
h(x)=\begin{cases} (1-1/x)d &\textrm{ if } 1\leq x\leq 1/\alpha, \\
(1-\alpha)d+1-1/(\alpha x) &\textrm{ if } x>1/\alpha.
\end{cases} 
\end{equation*}
Clearly $g,h$ are continuous functions such that $g$ is strictly decreasing, and $h$ is strictly increasing. We also have 
$h(1)<g(1)$ and $\lim_{x\to \infty} g(x)<\lim_{x\to \infty} h(x)$, so there is a unique point $x_*\in [1,\infty)$ such that 
$g(x_*)=h(x_*)$. Hence we obtain 
\begin{equation} \label{eq:min} 
\min_{x\geq 1} \max\{g(x),h(x)\}=g(x_*).
\end{equation}  
Calculating $x_*$ separately in the cases $h(1/\alpha)\geq g(1/\alpha)$ and $h(1/\alpha)<g(1/\alpha)$, and comparing the two values of $g(x_*)$ yield that 
\begin{equation} \label{eq:g*} g(x_*)=\max\left\{ \frac{\beta d}{\alpha d+\beta(1-\alpha d)},\beta(d+1-\alpha d)\right\}.
\end{equation} 
Now we are ready to prove \eqref{eq:uM}. Let $0<\eps<\delta_1$ be arbitrary and let $k\geq 2$ such that $\delta_k\leq \eps <\delta_{k-1}$. Let $\eps=\delta_{k-1}^{x}$ for some $x\geq 1$. Inequality \eqref{eq:l3} implies that 
\begin{equation} \label{eq:mpr} 
m_1\cdots m_{k-1}\leq \delta_{k}^{-2^{-k}}\leq \eps^{-2^{-k}}=\eps^{-\alpha o_k(1)},
\end{equation} 
where $o_k(1)\to 0$ as $k\to \infty$. By \eqref{eq:l2} we obtain that 
\begin{equation} \label{eq:mpr2} 
m_k\leq \eta_k^{-\beta}\lesssim_{\beta} \delta_{k-1}^{-\beta/(1-\beta)}=\eps^{-\beta/((1-\beta)x)}.
\end{equation} 
Since $A_{\beta}\subset D_k$, inequality \eqref{eq:Hold} implies that 
\begin{equation} \label{eq:dk1}
N(f^{*}(A_{\beta}),\eps^{\alpha})\leq N(f^{*}(D_k),\eps^{\alpha})\leq m_1\cdots m_k.  
\end{equation}
 
First assume that $1\leq x\leq 1/\alpha$. Then $\delta_{k-1}\leq \eps^{\alpha}$. Since every $(k-1)$st level interval $I=I_{i_1\dots i_{k-1}}\subset D_{k-1}$ has length $\delta_{k-1}\leq \eps^{\alpha}$, by \eqref{eq:Hold} we have 
\begin{align} 
\begin{split} 
\label{eq:dk2} 
N(f^{*}(I),\eps^{\alpha}) &\leq \lceil (\delta_{k-1}/\eps)^{\alpha} \rceil ^{d}\leq (2(\delta_{k-1}/\eps)^{\alpha})^d \\
&\lesssim_{d} (\delta_{k-1}/\eps)^{\alpha d}=\eps^{-(1-1/x)\alpha d}.
\end{split} 
\end{align} 
Thus \eqref{eq:dk1}, \eqref{eq:dk2}, \eqref{eq:mpr}, and \eqref{eq:mpr2} yield that 
\begin{align}  
\begin{split} 
\label{eq:N1} 
N(f^{*}(A_{\beta}),\eps^{\alpha})& 
\lesssim_{d} \min\left\{m_1\cdots m_k, m_1\cdots m_{k-1} \eps^{-(1-1/x)\alpha d}\right\} \\
&=m_1\cdots m_{k-1} \min\{m_k,  \eps^{-(1-1/x)\alpha d}\} \\ 
&\lesssim_{\beta} \eps^{-\alpha o_k(1)} \min\{ \eps^{-\beta/(x(1-\beta))}, \eps^{-(1-1/x)\alpha d}\} \\
&=\eps^{-\alpha(o_k(1)+\max\{g(x),h(x)\})}.
\end{split}
\end{align}

Now assume that $x>1/\alpha$, so $\delta_{k-1}>\eps^{\alpha}$. If $J$ is any interval of length $\eps^{\alpha}$, then 
\eqref{eq:Hold} implies that 
\[ N(f^{*}(J),\eps^{\alpha})\leq \lceil \eps^{\alpha^2-\alpha} \rceil^{d}\lesssim_{d} \eps^{(\alpha^2-\alpha)d}. \]
As every $(k-1)$st level interval $I=I_{i_1\dots i_{k-1}}\subset D_{k-1}$ of length $\delta_{k-1}$ can be covered by $\lceil \delta_{k-1}/\eps^{\alpha} \rceil\leq 2\delta_{k-1}/\eps^{\alpha}$ intervals of length $\eps^{\alpha}$, we obtain that 
\begin{equation} \label{eq:dk3} 
N(f^{*}(I),\eps^{\alpha}) \lesssim_{d} \delta_{k-1} \eps^{(\alpha^2-\alpha)d-\alpha}=\eps^{1/x+(\alpha^2-\alpha)d-\alpha}.
\end{equation} 
Similarly to \eqref{eq:N1} we obtain that 
\begin{align}  
\begin{split} 
\label{eq:N2} 
N(f^{*}(A_{\beta}),\eps^{\alpha})& 
\lesssim_{d} \min\left\{m_1\cdots m_k, m_1\cdots m_{k-1} \eps^{1/x+(\alpha^2-\alpha)d-\alpha} \right\} \\
&=m_1\cdots m_{k-1} \min\{m_k, \eps^{1/x+(\alpha^2-\alpha)d-\alpha}\} \\ 
&\lesssim_{\beta} 
\eps^{-\alpha o_k(1)} \min\{ \eps^{-\beta/(x(1-\beta))}, \eps^{1/x+(\alpha^2-\alpha)d-\alpha}\} \\
&=\eps^{-\alpha(o_k(1)+\max\{g(x),h(x)\})}.
\end{split}
\end{align} 

Inequalities \eqref{eq:N1} and \eqref{eq:N2} with \eqref{eq:min} and \eqref{eq:g*} imply that 
\[\overline{\dim}_M f^{*}(A_{\beta})\leq \min_{x\geq 1} \max\{g(x),h(x)\}=\max\left\{ \frac{\beta d}{\alpha d+\beta(1-\alpha d)},\beta(d+1-\alpha d)\right\}. \]
Thus \eqref{eq:uM} holds, and the proof is complete.
\end{proof}

\end{document}